\newcommand\Z{\mathbb{Z}}
\newcommand\R{\mathbb{R}}
\newcommand\cA{{\mathcal A}}
\newcommand\cF{{\mathcal F}}
\newcommand\cS{{\mathcal S}}
\newcommand\cL{{\mathcal L}}
\newcommand{\tropical}[1]{\left[\,#1\,\right]}
\newcommand{\I}{^{-1}}
\DeclareMathOperator{\rank}{rank}
\DeclareMathOperator{\MF}{\mathcal{MF}}
\DeclareMathOperator{\MCG}{MCG}
\newtheorem{thm}{Theorem}[section]
\newtheorem{lem}[thm]{Lemma}
\newtheorem*{prooftheorem31}{Proof of Theorem 1.5}
\newtheorem*{prooftheorem34}{Proof of Theorem 1.8}
\newtheorem*{prooflemma33}{Proof of Lemma 1.7}
\newtheorem*{prooftheorem35}{Proof of Theorem 1.9}
\newtheorem{corollary}[thm]{Corollary}
\theoremstyle{definition}		
\newtheorem{mydef}[thm]{Definition}
\newtheorem{example}[thm]{Example}
\newtheorem{mydefs}[thm]{Definitions}
\newtheorem{remark}[thm]{Remark}
\newtheorem{quest}[thm]{Question}
\newtheorem*{proof-sketch}{Sketch of Proof}
\begin{document}

\begin{center}
\large{Dynnikov and train track transition matrices of pseudo-Anosov braids}\\

\vspace{3mm}

\small{S. \"OYK\"U YURTTA\c S}
\end{center}

\begin{abstract}
We compare the spectra  of \emph{Dynnikov matrices} with the spectra of the \emph{train track transition matrices} of a given pseudo-Anosov braid on the finitely punctured disk, and show that these matrices are isospectral up to roots of unity and zeros under some particular conditions.  It is shown, via examples, that Dynnikov matrices are much easier to compute than transition matrices, and so yield data that was previously inaccessible.
\end{abstract}

\section{Introduction and statement of results}

 The Nielsen-Thurston classification theorem states that every homeomorphism of a compact orientable surface is isotopic to either a finite order or a pseudo-Anosov or a reducible \cite{W88} homeomorphism. If some iterate of a homeomorphism is the identity, it is called finite order. If a homeomorphism preserves a transverse pair of measured foliations \cite{W88, FLP79} on the
surface, stretching one foliation uniformly by a real number $\lambda>1$ (\emph{dilatation}) and contracting
the other uniformly by $1/\lambda$, then it is called pseudo -Anosov. If a homeomorphism
preserves a collection of mutually disjoint essential simple closed curves (reducing
curves), it is called reducible.

			The usual way to study the dynamics of an isotopy class of surface homeomorphisms is to use Thurston's train tracks \cite{bh95,los}. In \cite{bh95}, the algorithm starts with a graph $G$ which is a spine of the surface and the isotopy class is represented by a graph map. The algorithm repeatedly modifies $G$ and the associated graph map until it either finds an explicit reducing curve for the isotopy class , or a graph map which is the simplest possible. If the isotopy class is pseudo\,-Anosov, this simplest graph map can be used to construct \emph{a train track} and \emph{train track transition matrix} from which the dilatation of the isotopy class, the singularity structure of the invariant foliations and the periodic orbit structure are obtained. However, for isotopy classes with high dilatation the lengths of the image edge paths (represented by words whose letters label the edges) of the train track  are so long such that even a computer cannot store them.

		 In this paper we introduce an alternative matrix for a given pseudo-Anosov isotopy class  which can give us the same dynamical information as the train track transition matrix does. That is,  we present \emph{Dynnikov matrices} as a new tool to study the dynamics of pseudo-Anosov isotopy classes on the $n$-punctured disk  $D_n$ ($n\geq 3$). These matrices are much easier to compute than computing train-track transition matrices (Section \ref{comparemethods} gives an example to contrast the computation of a Dynnikov matrix with that of the train track transition matrix of a given pseudo-Anosov isotopy class on $D_4$). Roughly speaking, a \emph{Dynnikov matrix} is an integer matrix which describes the action of a given pseudo-Anosov isotopy class in a neighbourhood of its invariant unstable measured foliation in terms of \emph{Dynnikov's coordinates} \cite{D02, DDRW02, or08, paper1, paper2}  on the space of projective measured foliations on $D_n$. The dilatation of a  given a pseudo-Anosov isotopy class equals the spectral radius of its Dynnikov matrix. Making use of this fact, in \cite{paper1} we gave an alternative approach to compute the dilatation of each member of an infinite family of pseudo-Anosov isotopy classes on $D_n$ using Dynnikov matrices. The aim of this paper is to show that it is not only the dilatation but the whole set of eigenvalues (up to roots of unity) that train track transition and Dynnikov matrices share.  This will yield a computationally much more efficient way to study the dynamics of pseudo-Anosov isotopy classes on $D_n$. Let us briefly explain the idea behind our method and then describe the Dynnikov coordinate system. 
		
			Let $M$ be a compact, orientable surface (perhaps with boundary) with negative Euler characteristic. Let $\mathcal{MF}(M)$ be the space of measured foliations on $M$ and $\mathcal{PMF}(M)$ be the corresponding space of projective measured foliations.

The method developed in this paper lies in Thurston's seminal paper  on the geometry and dynamics of surface automorphisms \cite{W88} and  builds on more recent work of Moussafir \cite{M06}. The Teichm\"uller space $\mathcal{T}(M)$ of $M$ is an open ball and $\mathcal{PMF}(M)$ forms its boundary.  The closure  $\overline{\mathcal{T}(M)}$ is a closed ball on which the mapping class group $\MCG(M)$ acts continuously. Let $[f]$ be a pseudo\,-Anosov  isotopy class with invariant measured foliations $(\cF^s,\mu^s)$~and~$(\cF^u,\mu^u)$~and dilatation~$\lambda>1$. Let  $[\mathcal{F}^u,\mu^u]$  and $[\mathcal{F}^s,\mu^s]$ denote the  projective classes of its invariant foliations on $\mathcal{PMF}(M)$. The only fixed points of the induced action of $[f]$ in $\overline{\mathcal{T}}$ are $[\mathcal{F}^u,\mu^u]$~and $[\mathcal{F}^s,\mu^s]$ on $\mathcal{PMF}(M)$ and every other point on $\mathcal{PMF}(M)$ converges to $[\mathcal{F}^u,\mu^u]$ rapidly under the action of $[f]$.

		 The induced action of $[f]$ on $\mathcal{PMF}(M)$ is piecewise linear and is locally described by integer matrices. The matrix on any piece which contains $[\mathcal{F}^u,\mu^u]$ on its closure has an eigenvalue $\lambda>1$ since $[\mathcal{F}^u,\mu^u]$ is a fixed point on $\mathcal{PMF}(M)$. Therefore, in order to compute the dilatation of $[f]$ one should compute the action of $[f]$ on $\mathcal{PMF}(M)$ and find a matrix with an eigenvalue $\lambda>1$ with associated eigenvector contained in the relevant piece. Then the eigenvector corresponds to $[\mathcal{F}^u,\mu^u]$ and $\lambda$ gives the dilatation. In \cite{paper1} we realized this idea on $D_n$, coordinatizing the space of  measured foliations $\mathcal{MF}_n$  on $D_n$ using the Dynnikov coordinates and describing the action of $\MCG(D_n)$ on $\mathcal{MF}_n$ in terms of Dynnikov coordinates using the \emph{update rules} \cite{D02}.

		 The next section describes the Dynnikov coordinate system which puts global coordinates on $\mathcal{MF}_n$ \cite{D02, DDRW02, or08, paper1, paper2}.

\subsection{Dynnikov coordinates}\label{dynnikovsection}
Take a standard model of $D_n$ as depicted in Figure \ref{arcs}. Let $\cA_n$ be the set of arcs in~$D_n$~which have each endpoint either on the boundary or at a puncture. Consider the arcs $\alpha_i \in \cA_n$ ($1\le i\le 2n-4$) and $\beta_i\in \cA_n$ ($1\leq i\leq n-1$) as depicted in Figure~\ref{arcs}. Let $(\mathcal{F},\mu)\in\mathcal{MF}_n$. 

The {\em Dynnikov coordinate function} $\rho:\MF_n\to\R^{2n-4}\setminus\{0\}$  defined by


\begin{eqnarray*}
\rho(\mathcal{F},\mu) = (a,b)=(a_1,\ldots,a_{n-2},\,b_1,\ldots,b_{n-2}),
\end{eqnarray*}
where for $1\le i\le n-2$

\begin{eqnarray*}
a_i=\frac{\mu([\alpha_{2i}])-\mu([\alpha_{2i-1}])}{2}\qquad
\text{and} \qquad b_i=\frac{\mu([\beta_i])-\mu([\beta_{i+1}])}{2} \label{dynnikov}
\end{eqnarray*}

\noindent is a homeomorphism \cite{D02, paper1, paper2}. 

Let $\mathcal{L}_n$ denote the set of integral laminations (disjoint unions of finitely many essential simple closed curves) on $D_n$. The Dynnikov coordinate function restricts to a bijection $\rho:\cL_n\to\Z^{2n-4}\setminus\{0\}$  \cite{paper1,paper2}. Figure \ref{arcs2} depicts the Dynnikov coordinates of an integral lamination $\mathcal{L}\in \mathcal{L}_5$.

\begin{figure}[h!]
\begin{center}
\psfrag{a1}[tl]{$\scriptstyle{\alpha_1}$} 
\psfrag{i+1}[tl]{$\scriptstyle{i}$} 
\psfrag{i}[tl]{$\scriptstyle{i-1}$} 
\psfrag{n}[tl]{$\scriptstyle{n}$} 
\psfrag{1}[tl]{$\scriptstyle{\alpha_1}$} 
\psfrag{b1}[tl]{$\scriptstyle{\textcolor{red}{\beta_1}}$} 
\psfrag{b2}[tl]{$\scriptstyle{\textcolor{red}{\beta_i}}$} 
\psfrag{bi}[tl]{$\scriptstyle{\textcolor{red}{\beta_{i-1}}}$}
\psfrag{b6}[tl]{$\scriptstyle{\textcolor{red}{\beta_{n-1}}}$} 
\psfrag{2i}[tl]{$\scriptstyle{\alpha_{2i}}$}
\psfrag{2}[tl]{$\scriptstyle{\alpha_2}$} 
\psfrag{2i-5}[tl]{$\scriptstyle{\alpha_{2i-5}}$}
\psfrag{2i-2}[tl]{$\scriptstyle{\alpha_{2i-2}}$}
\psfrag{2i-3}[tl]{$\scriptstyle{\alpha_{2i-3}}$}
\psfrag{2i-1}[tl]{$\scriptstyle{\alpha_{2i-1}}$}
\psfrag{2i-4}[tl]{$\scriptstyle{\alpha_{2i-4}}$}
\psfrag{n-4}[tl]{$\scriptstyle{\alpha_{2n-4}}$} 
\psfrag{n-5}[tl]{$\scriptstyle{\alpha_{2n-5}}$} 
\includegraphics[width=0.8\textwidth]{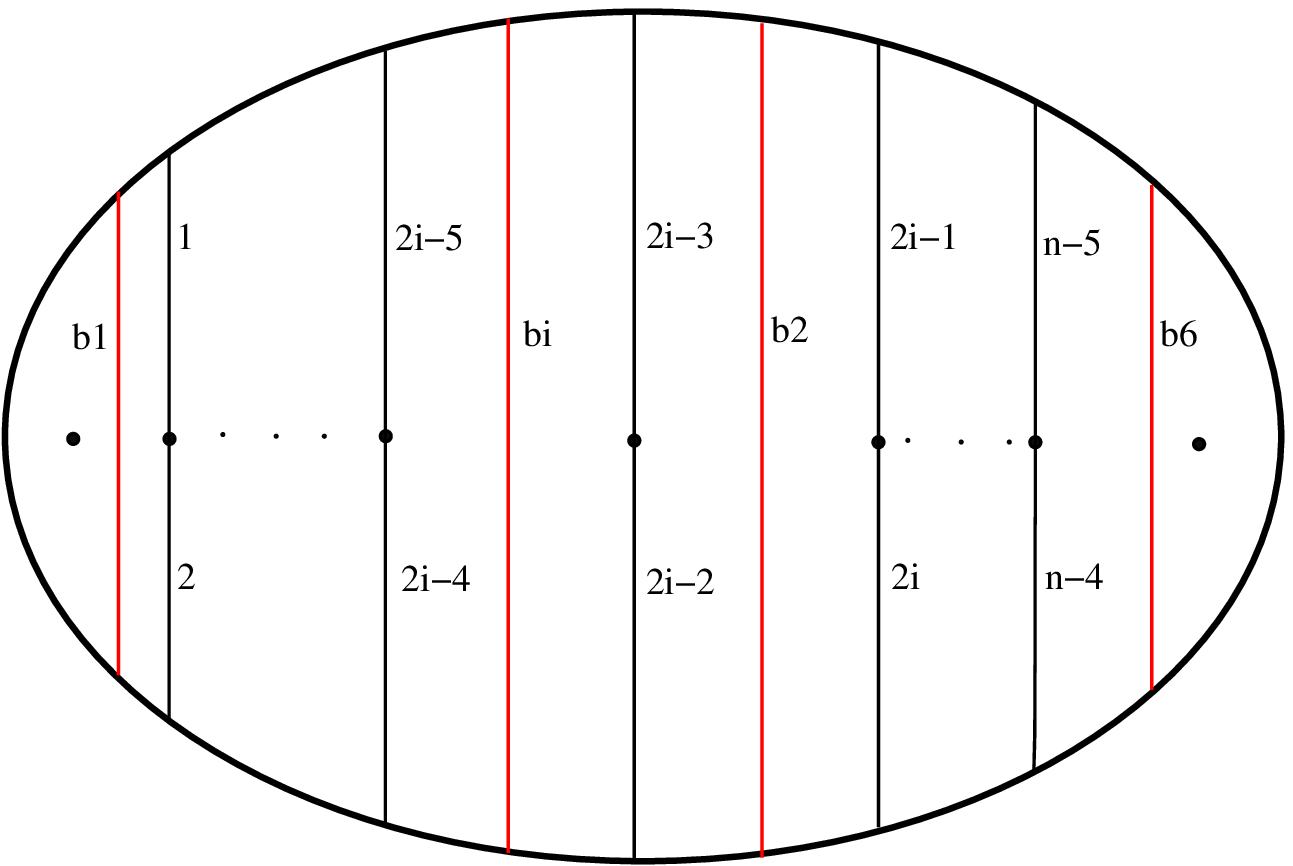}
\caption{The arcs $\alpha_i$ and $\beta_i$} \label{arcs}
\psfrag{b1}[tl]{$\scriptstyle{6}$} 
\psfrag{bi}[tl]{$\scriptstyle{4}$} 
\psfrag{b2}[tl]{$\scriptstyle{2}$}
\psfrag{b6}[tl]{$\scriptstyle{2}$} 
\psfrag{2i}[tl]{$\scriptstyle{0}$}
\psfrag{2i-5}[tl]{$\scriptstyle{4}$}
\psfrag{2i-2}[tl]{$\scriptstyle{3}$}
\psfrag{2i-3}[tl]{$\scriptstyle{1}$}
\psfrag{2i-1}[tl]{$\scriptstyle{2}$}
\psfrag{2i-4}[tl]{$\scriptstyle{2}$}
\includegraphics[width=0.8\textwidth]{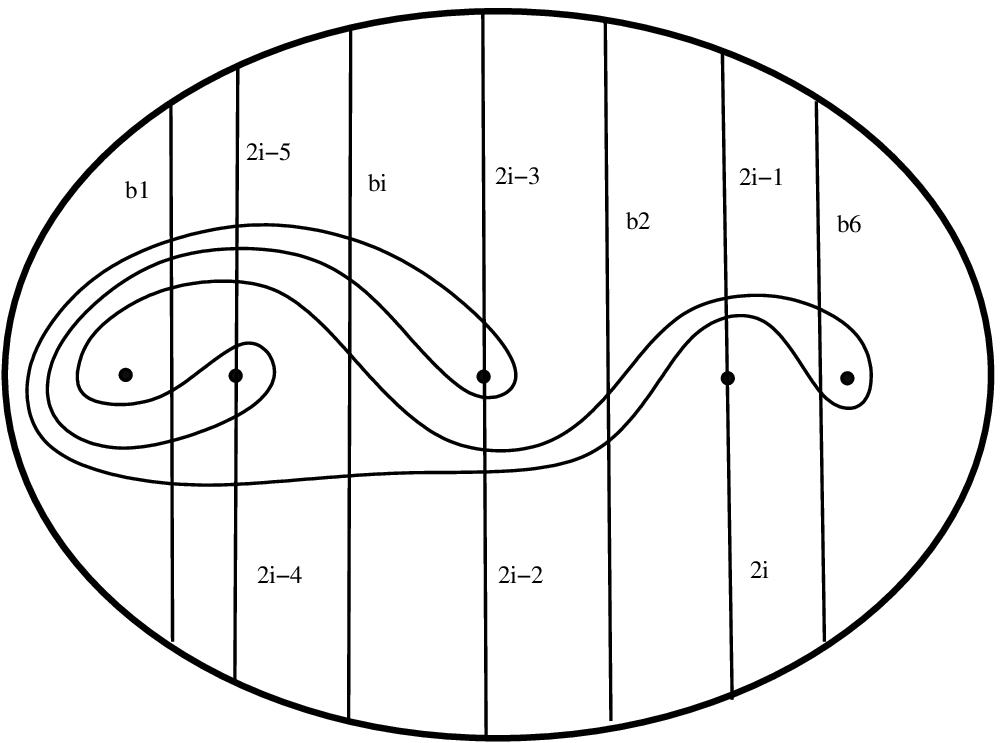}
\caption{$\rho(\mathcal{L})=(-1,1,-1,1,1,0)$.} \label{arcs2}
\end{center}
\end{figure}

Let $\cS_n=\R^{2n-4}\setminus \left\{0\right\}$ denote the space of Dynnikov coordinates. Projectivizing $\mathcal{S}_n$ we get a homeomorphism between $\mathcal{PMF}_n$ and $\mathcal{PS}_n$. The next section gives the \emph{update rules} which describe the action of $B_n$ on $\mathcal{S}_n$ and hence on $\mathcal{PS}_n$, and introduces Dynnikov matrices with illustrative examples.

%



\subsection{Update Rules and Dynnikov matrices}\label{updaterules}

Since $\MCG(D_n)$ is canonically isomorphic to Artin's braid group $B_n$ \cite{emil1,emil2}, the isotopy classes in $\MCG(D_n)$ are represented by  sequences of Artin's braid generators. The action of Artin's braid generators $\sigma_i$, $\sigma^{-1}_i,$ $(1 \leq i\leq n-1)$ on $\mathcal{MF}_n$ in terms of Dynnikov coordinates is described by the \textit{update rules} \cite{D02, paper1}.  Therefore, using the update rules  one can compute
 $\beta:\cS_n\to\cS_n$ given by,
$$\beta(a,b)=\rho\circ\beta\circ\rho\I(a,b)$$ for each $\beta\in B_n$.

\begin{figure}[h!]
\begin{center}
\psfrag{x}[tl]{$\scriptstyle{\sigma^{-1}_3\sigma_2\sigma^{-1}_1}$} 
\includegraphics[width=0.9\textwidth]{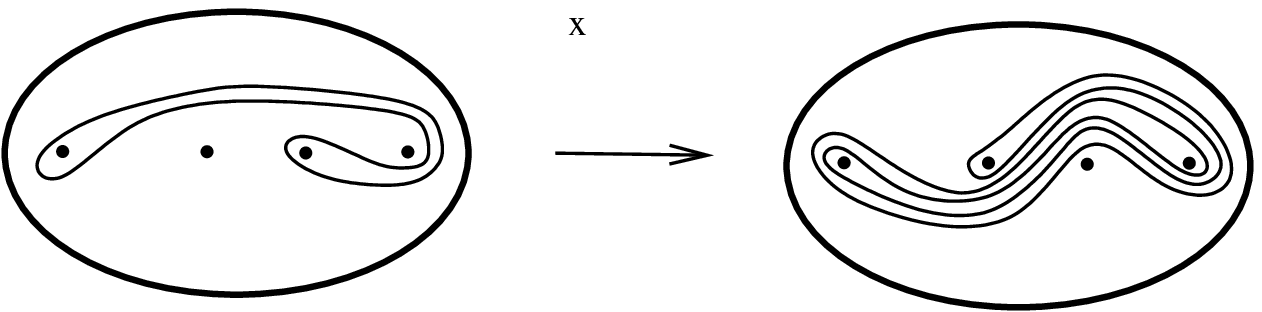}
\caption{$\rho(\mathcal{L})=(-1,-1,0,-1)$ and $\rho(\sigma^{-1}_3\sigma_2\sigma^{-1}_1(\mathcal{L}))=(2,-3,-1,0)$}\label{updateresim2}
\end{center}\end{figure}

		For computational and notational convenience, we will work in the {\em max-plus semiring} $(\R,\oplus,\otimes)$ \cite{paper1}. It will be convenient to use normal additive and multiplicative notation, and to indicate that these are to be interpreted in the max-plus sense by enclosing the formulae in square brackets. Therefore, $\left[a+b\right] = \max(a,b), \left[ab\right] = a+b, \left[a/b\right] = a-b, \left[1\right]= 0$.

For example, the formula 
$a_i'=\tropical{\frac{a_{i-1}a_ib_i}{a_{i-1}(1+b_i)+a_i}}$
will be just another way of writing
$a_i'=a_{i-1}+a_i+b_i-\max(a_{i-1}+\max(0,b_i),a_i).$
 Note that  both addition and multiplication are commutative and multiplication is distributive over addition:

\begin{align*}
\left[a+b\right]&=\max(a,b)=\max(b,a)=\left[b+a\right],\\
\left[ab\right]&=a+b=b+a=\left[ba\right],\\
\left[a(b+c)\right]&=a+\max(b,c)=\max(a+b,a+c)=\left[ab+ac\right].
\end{align*}
Let~$(a,b)\in\cS_n$ and \mbox{$1\le i\le n-1$}, and write 
$\sigma_i(a,b)=(a',b')$, $\sigma_i\I(a,b)=(a'',b'')$. 

\begin{thm}[see \cite{D02} and \cite{paper1}]\label{lem:update}

\leavevmode
\vspace{3mm}

\begin{itemize}
\item if $i=1$ then
\begin{align*}
a_1' &= \tropical{ \frac{a_1b_1}{a_1+1+b_1} }, &
b_1' &= \tropical{ \frac{1+b_1}{a_1} }\\*
a_1'' &= \tropical{ \frac{1+a_1(1+b_1)}{b_1}  }, &
b_1'' &= \tropical{ a_1(1+b_1) }; 
\end{align*}

\item if $2\le i \le n-2$ then
\begin{align*}
a_{i-1}' &= \tropical{ a_{i-1}(1+b_{i-1})+a_ib_{i-1}  }, &
b_{i-1}' &= \tropical{ \frac{a_ib_{i-1}b_i}{a_{i-1}(1+b_{i-1})(1+b_i)
+ a_ib_{i-1}}  }\\*
a_i' &= \tropical{ \frac{a_{i-1}a_ib_i}{a_{i-1}(1+b_i)+a_i}  }, &
b_i' &= \tropical{ \frac{a_{i-1}(1+b_{i-1})(1+b_i) + a_ib_{i-1}}{a_i} };\\*
a_{i-1}'' &= \tropical{
  \frac{a_{i-1}a_i}{a_{i-1}b_{i-1}+a_i(1+b_{i-1})}  },  &
b_{i-1}'' &= 
\tropical{ \frac{a_{i-1}b_{i-1}b_i}{a_{i-1}b_{i-1}+a_i(1+b_{i-1})(1+b_i)} }, \\*
a_i'' &= \tropical{ \frac{a_{i-1}+a_i(1+b_i)}{b_i}  }, &
b_i'' &= \tropical{ \frac{a_{i-1}b_{i-1}+a_i(1+b_{i-1})(1+b_i)}{a_{i-1}} };  
\end{align*}
\item if $i=n-1$ then
\begin{align*}
a_{n-2}' &= \tropical{ a_{n-2}(1+b_{n-2})+b_{n-2} }, &
b_{n-2}' &= \tropical{ \frac{b_{n-2}}{a_{n-2}(1+b_{n-2})} }\\*
a_{n-2}'' &= \tropical{\frac{a_{n-2}}{a_{n-2}b_{n-2}+1+b_{n-2}} }, &
b_{n-2}'' &= \tropical{ \frac{a_{n-2}b_{n-2}}{1+b_{n-2}} }.
\end{align*}
\end{itemize}
In all other cases $a_j'=a_j''=a_j$, $b_j'=b_j''=b_j$.

\end{thm}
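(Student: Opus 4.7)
My plan is to reduce the theorem to a finite collection of local computations near the punctures $i$ and $i+1$, and to verify each case by direct geometric inspection, using the max-plus formalism to unify the resulting case analysis.

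First, by density of $\cL_n$ in $\MF_n$ under $\rho$ and continuity of all expressions in $(a,b)$, it suffices to prove the update rules on integral laminations. For such laminations the arc weights $\mu([\alpha_j])$ and $\mu([\beta_j])$ are honest geometric intersection numbers, and $(a,b)$ are signed half-differences of these integers. A preliminary reconstruction lemma then expresses each $\mu([\alpha_j])$ and $\mu([\beta_j])$ as a tropical expression in $(a,b)$: around each puncture the weights on the four incident arcs satisfy a triangle inequality, so one of the four weights equals the max-plus combination of the other three. Iterating around all punctures gives the reconstruction formulas that drive the proof.

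Next, I would analyse the action of $\sigma_i$ on arcs directly. The generator $\sigma_i$ is a half-twist supported in a small disk containing only the punctures $i$ and $i+1$, so $\sigma_i(\mathcal{F},\mu)$ agrees with $(\mathcal{F},\mu)$ outside this disk, and only the weights on arcs meeting the disk change. Each new weight can be read off as a max-plus combination of the old weights by examining how the pulled-back arc meets the leaves of $\mathcal{F}$, with the triangle inequality on the twice-punctured subdisk providing the needed identities. Substituting the reconstruction formulas for the old weights and simplifying tropically then yields the stated formulas for $a_j'$ and $b_j'$. The generator $\sigma_i^{-1}$ is treated dually (or by inverting the formulas for $\sigma_i$), and the boundary cases $i=1$, $i=n-1$ are identical in spirit except that one of the two punctures is replaced by the outer boundary of $D_n$; this accounts for the shorter expressions in the first and third bullets of the statement.

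The principal obstacle is combinatorial: in the twice-punctured subdisk the leaves of $\mathcal{F}$ can sit in several genuinely distinct configurations (depending on which pairs of punctures are joined by leaves of the foliation, and on the relative orientations), and a priori each configuration requires its own bookkeeping. The max-plus formalism is precisely what collapses all these cases into single uniform formulas, but verifying that the collapse is correct is tedious. I would organise the verification by fixing the signs of $a_{i-1},a_i,b_{i-1},b_i$ (recalling $\tropical{1}=0$, so the sign of, e.g., $b_i$ determines whether $\tropical{1+b_i}$ evaluates to $0$ or to $b_i$) and checking each resulting branch against the geometric picture suggested by Figure \ref{arcs}. With this sign case analysis the computation becomes finite and largely mechanical, which is exactly why Dynnikov's tropical formulation is so compact.
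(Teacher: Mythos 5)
The paper itself does not prove Theorem~\ref{lem:update}: it is quoted from \cite{D02} and \cite{paper1}, so the only comparison possible is with the derivation in those references. Your outline follows that derivation in spirit: reduce to laminations, analyse the half-twist $\sigma_i$ locally in a disk containing only the punctures $i$ and $i+1$ (with the cases $i=1$, $i=n-1$ obtained by replacing one puncture by $\partial D_n$), and let the max-plus formalism absorb the finite case analysis. As a route this is the right one.

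Two of your reduction steps, however, would not work as written. First, $\mathcal{L}_n$ is not dense in $\mathcal{MF}_n$: under $\rho$ it corresponds to $\mathbb{Z}^{2n-4}\setminus\{0\}$, which is not dense in $\mathbb{R}^{2n-4}\setminus\{0\}$. What is dense is the set of positive scalar multiples of integral laminations, so the passage from $\mathcal{L}_n$ to $\mathcal{MF}_n$ needs, besides continuity, the positive homogeneity of $\rho\circ\sigma_i^{\pm 1}\circ\rho^{-1}$ and of each tropical expression (every bracket in the statement is positively homogeneous of degree one), applied to rational rays. Second, organising the verification by ``fixing the signs of $a_{i-1},a_i,b_{i-1},b_i$'' is not enough to linearise the formulas: the regions of linearity are cut out by comparisons \emph{between} coordinates, e.g.\ $a_{i-1}+\max(0,b_i)$ versus $a_i$ in the expression for $a_i'$, not by the coordinate hyperplanes alone; geometrically the correct case division is by the finitely many configurations of the lamination inside the supporting twice-punctured disk (how many strands pass above, below, and around each of the two punctures), which is what determines which term realises each maximum. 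Relatedly, your ``reconstruction lemma'' expressing the arc weights $\mu([\alpha_j])$, $\mu([\beta_j])$ tropically in terms of $(a,b)$ is itself a nontrivial statement (it is essentially the content of \cite{paper2}) and needs a precise formulation and proof rather than an appeal to a per-puncture triangle inequality; as stated (``one of the four weights equals the max-plus combination of the other three'') it is too vague to drive the substitution step of your argument.
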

\vspace{2mm}

\begin{example}\label{illustrative}
Let $\mathcal{PMF}_3\cong S^1$ be the space of projective measured foliations on $D_3$.  Using Theorem~\ref{lem:update}, that is using the update rules given there,  one can explicitly compute the $2\times 2$ integer matrices which describe the piecewise linear action of $\beta=\sigma_1\sigma^{-1}_2$ on $\mathcal{PMF}_3$.

\begin{figure}[h!]
\begin{center} 
\psfrag{2}[tl]{$\tiny{\left[ \begin {array}{cc} 1&-1\\\noalign{\medskip}1&0
\end {array} \right]}$}
\psfrag{A}[tl]{$\tiny{A}$}
\psfrag{B}[tl]{$\tiny{B}$}
\psfrag{a}[tl]{$\tiny{a}$}
\psfrag{b}[tl]{$\tiny{b}$}
\psfrag{C}[tl]{$\tiny{C}$}
\psfrag{D}[tl]{$\tiny{D}$}
\psfrag{E}[tl]{$\tiny{E}$}
\psfrag{l1}[tl]{$\tiny{\ell_1}$}
\psfrag{l2}[tl]{$\tiny{\ell_2}$}
\psfrag{F}[tl]{$\tiny{F}$}
\psfrag{3}[tl]{$\tiny{\left[ \begin {array}{cc} 1&-1\\\noalign{\medskip}-1&2
\end {array} \right]}$}
\psfrag{4}[tl]{$\tiny{\left[ \begin {array}{cc} 0&1\\\noalign{\medskip}-1&2
\end {array} \right]}$}
\psfrag{1}[tl]{$\tiny{\left[ \begin {array}{cc} 2&-1\\\noalign{\medskip}1&0
\end {array} \right]}$}
\psfrag{5}[tl]{$\tiny{\left[ \begin {array}{cc} 0&1\\\noalign{\medskip}-1&1
\end {array} \right]}$}
\psfrag{6}[tl]{$\tiny{\left[ \begin {array}{cc} 2&1\\\noalign{\medskip}1&1
\end {array} \right]}$}
\psfrag{a=b}[tl]{${a=b}$}
\psfrag{a=2b}[tl]{${a=2b}$}
\includegraphics[width=0.85\textwidth]{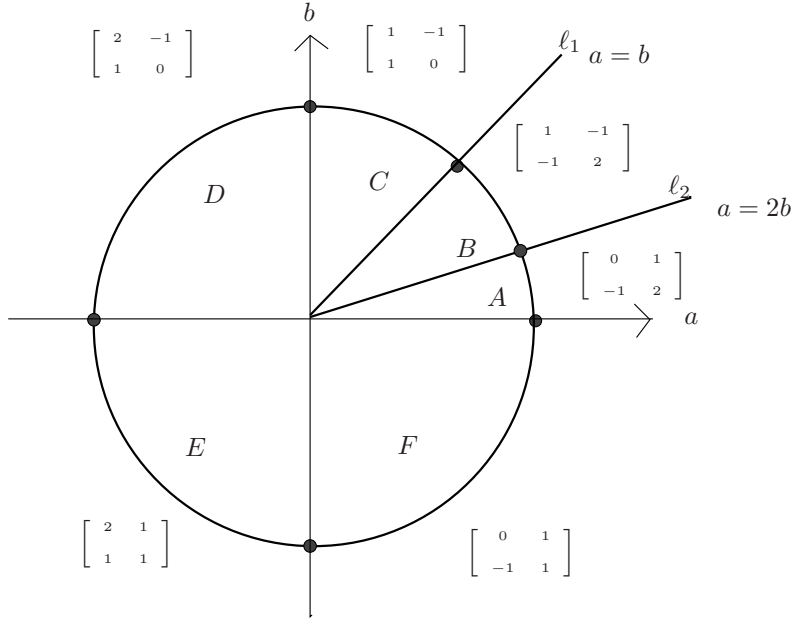}
\caption{The action of $\sigma_1\sigma^{-1}_2$ on $\mathcal{PS}_3$}
\label{fig:teichmuller1}
\end{center}
\end{figure}

We remark that  this example concretely illustrates the action of a pseudo\,-Anosov  braid (which is the simplest possible) on the whole space $\mathcal{PMF}_3$.  See Figure \ref{fig:teichmuller1}.

Let $\ell_1$ and $\ell_2$ denote the lines $a=b$ and $a=2b$ in the first quadrant. Let $+a$~($-a$) and $+b~(-b)$ denote the positive (negative) $a$-axis and $b$-axis respectively. Write $x\rightarrow y$ if $x$ is sent onto $y$ by the action of $\sigma_1\sigma^{-1}_2$. Then we have $\ell_1\rightarrow+b\rightarrow-a$, $\ell_2\rightarrow+a\rightarrow-b$ and then $-a$ and $-b$ are sent into the interior of $E$. Hence $A\rightarrow F \rightarrow E$ and $C \rightarrow D\rightarrow E$. Therefore, we can see that $p^u=[a^u,b^u]$ lies in region $E$ and $p^s=[a^s,b^s]$ lies in region $B$.

Indeed the matrix
$$D=\left[ \begin {array}{cc} 2&1\\\noalign{\medskip}1&1
\end {array} \right]
$$
has an eigenvalue $\lambda=\frac{3+\sqrt{5}}{2}$, and the corresponding eigenvector $p^u=-(\frac{1+\sqrt {5}}{2},1)$ belongs to $E$ since $a^u\leq 0$ and $b^u\leq 0$. Hence $p^u$ is a fixed point for  $\sigma_1\sigma_2^{-1}$ on $\mathcal{PS}_3$.
Hence, $p^u=[a^u,b^u]$ corresponds to the invariant unstable foliation $[\mathcal{F}^u,\mu^u]$.  Similarly, the matrix  
$$
\left[ \begin {array}{cc} 1&-1\\\noalign{\medskip}-1&2
\end {array} \right]
$$
has an eigenvalue $1/\lambda$ and the associated eigenvector $p^s$ belongs to the region $a\geq 0,~b\geq 0,~ b\leq a\leq 2b$. Hence $p^s$ is a fixed point and corresponds to the invariant stable foliation $[\mathcal{F}^s,\mu^s]$.

\end{example}

The collection of linear equations (not necessarily independent) in various maxima in update rules give each region on $\mathcal{PS}_n$ the structure of a polyhedron. 
One can see this in Figure \ref{fig:teichmuller1} by observing that each region is a solution set for a system of linear inequalities induced by these equations.
Let $p^u=(a^u,b^u)$  and $p^s=(a^s,b^s)$ denote the Dynnikov coordinates of $(\mathcal{F}^u,\mu^u)$ and $(\mathcal{F}^s,\mu^s)$ respectively. Next we define the \emph{Dynnikov matrices} which describe the action of $\beta$ near~$p^u$.

\begin{mydef}\label{dynndef}

Let $\beta\in B_n$ be a pseudo\,-Anosov  braid with invariant unstable measured foliation $(\mathcal{F}^u,\mu^u)$ given by the Dynnikov coordinates $p^u=(a^u,b^u)$.  The action of $\beta$ on $\mathcal{S}_n$ is piecewise linear and each closed piece $\mathcal{R}_i\subset \mathcal{S}_n$ containing $(a^u,b^u)$ is called a \emph{Dynnikov region}. 
Then a \emph{Dynnikov matrix} $D_i:R_i\to\mathcal{S}_n$, ($1\leq i \leq k$) is a $(2n-4)\times(2n-4)$ integer matrix which describes the behaviour of the braid on a Dynnikov region $\mathcal{R}_i$.  That is, 
\begin{align*}
\rho(\beta(\mathcal{F},\mu))=D_i(a,b) ~~\text{for}~~ (a,b) \in R_i.
\end{align*}
\end{mydef}

In contrast with the example above there can be more than one Dynnikov region for a given pseudo\,-Anosov  braid $\beta$. This happens when $(a^u,b^u)$ is on the boundary of several regions on $\mathcal{S}_n$.
\begin{example}
Let $\beta=\sigma_1\sigma_2\sigma_3\sigma^{-1}_4\in B_5$. Using Theorem~\ref{updaterules}  we compute that the action of $\sigma_1\sigma_2\sigma_3\sigma^{-1}_4$ on $\mathcal{S}_5$ in a region where $a_i\leq 0$ and $b_i\leq 0$ for all $i$ is given by two matrices $D_1$ (when 
$b_1\leq a_2-a_1$) and $D_2$ (when $b_1\geq a_2-a_1$) where $D_1$ and $D_2$ are given as follows.

\begin{equation*}
\tiny D_1=\begin{pmatrix}-1&1&0&0&0&0\\ \noalign{\medskip}0&0&0&
1&1&0\\ \noalign{\medskip}0&0&2&-1&-1&1\\ \noalign{\medskip}0&0&0&0&1&0
\\ \noalign{\medskip}-1&0&1&-1&-1&1\\ \noalign{\medskip}0&0&1&0&0&1
\end{pmatrix},~~D_2=\begin{pmatrix}0&0&0&1&0&0\\ \noalign{\medskip}0&0&0&1
&1&0\\ \noalign{\medskip}0&0&2&-1&-1&1\\ \noalign{\medskip}-1&1&0&-1&1
&0\\ \noalign{\medskip}0&-1&1&0&-1&1\\ \noalign{\medskip}0&0&1&0&0&1
\end{pmatrix}
\end{equation*} 

\normalsize
\noindent Both of these matrices have eigenvalue $\frac{3+\sqrt{5}+\sqrt{6\sqrt{5}-2}}{4}$, with the corresponding eigenvector $p^u$ having all negative entries and satisfying the equality $a_2= a_1+b_1$. Therefore, both $D_1$ and $D_2$ are Dynnikov matrices. 
		
		\end{example}

\subsection{Main results}\label{Background}
Knowing from \cite{los, paper1} that both Dynnikov matrices and train track transition matrices record the action of a pseudo-Anosov braid on the same space $\mathcal{PMF}_n$, we would like to know exactly what the new action tells us. Note that the dimensions of a Dynnikov and train track transition matrix for the same braid are in general different.

		 Our aim in this paper is to show that  Dynnikov matrices and train track transition matrices of a given pseudo-Anosov isotopy class are isospectral up to roots of unity and zeros under some particular conditions.  It turns out that the isospectrality of these matrices depends on the singularity structure of the invariant foliations and how the prongs (a leaf starting at a singularity) of the singularities are permuted under the action of the isotopy class. We also note that this result is relatively straightforward when the unstable measured foliation lies in a single Dynnikov region: the difficulty arises in dealing with the case when it lies on the boundary of several regions, particularly when the action of the isotopy class permutes the regions, in which case the spectrum of the various Dynnikov matrices does not have an obvious interpretation (see Remark \ref{sonornek}).

		Our first main result is for the case where $(\mathcal{F}^u,\mu^u)$ has only unpunctured $3$-pronged and punctured $1$-pronged singularities. The notion of a ``regular'' train track in the following theorems will be clarified in Section \ref{subsection21}.
\begin{thm}\label{firstthm}
Let $\beta\in B_n$ be a pseudo\,-Anosov braid with unstable invariant foliation $(\mathcal{F}^u,\mu^u)$ and dilatation $\lambda>1$. Let  $\tau$ be a regular invariant train track with associated transition matrix $T$. If $(\mathcal{F}^u,\mu^u)$ has only unpunctured $3$-pronged and punctured $1$-pronged singularities, then $\beta$ has a unique Dynnikov matrix $D$, and $D$ and $T$ are isospectral.
\end{thm}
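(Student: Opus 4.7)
The plan is to construct a linear intertwiner $\Phi$ from the train track weight space to the Dynnikov coordinate space, show that it is a bijection under the singularity hypothesis, and thereby exhibit $T$ and $D$ as conjugate on the switch-condition subspace, from which isospectrality follows.

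First I would dispose of uniqueness of $D$. The Dynnikov regions in $\cS_n$ are polyhedral cells carved out by the maxima appearing in the update rules of Theorem \ref{lem:update}; the point $p^u=(a^u,b^u)$ lies on the common boundary of several regions exactly when some such maximum is attained by more than one of its arguments at $p^u$, which geometrically corresponds to a singular leaf of $(\mathcal{F}^u,\mu^u)$ meeting one of the arcs $\alpha_i$ or $\beta_j$ of Figure \ref{arcs} in a non-generic way. Under the hypothesis that every singularity is an unpunctured $3$-pronged or a punctured $1$-pronged one, a direct local analysis near each prong will show that no such coincidence can occur, so $p^u$ lies in the interior of a single region $\mathcal{R}$ and $D$ is unique.

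Next I would build the intertwiner. Let $W(\tau)$ be the space of transverse weights on $\tau$ satisfying the switch conditions. Each $w\in W(\tau)$ carries a unique measured foliation $F_w\in\MF_n$, and the measures $\mu_w(\alpha_i), \mu_w(\beta_j)$ are integer linear combinations of the entries of $w$ determined by how the arcs cross the edges of $\tau$. Substituting these into the defining formulas for $\rho$ produces a linear map
\begin{equation*}
\Phi\colon W(\tau)\longrightarrow \R^{2n-4},\qquad \Phi(w)=\rho(F_w).
\end{equation*}
Since $\beta$ acts on $\MF_n$ through both $T$ (on weights) and $D$ (on coordinates throughout $\mathcal{R}$), the identities $\beta_*F_w=F_{Tw}$ and $\rho(\beta_*F_w)=D\,\rho(F_w)$ combine to give $\Phi\circ T=D\circ\Phi$ on the invariant cone at $p^u$, and hence everywhere by linearity.

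The crucial step is to show that $\Phi$ is an isomorphism, whence $T|_{W(\tau)}\sim D$ and isospectrality follows. An Euler--Poincar\'e count on $D_n$ under the singularity hypothesis yields $\dim W(\tau)=2n-4$, matching the target, and injectivity then reduces to the assertion that a non-zero $w\in W(\tau)$ yields a non-zero foliation $F_w$ (using that $\rho$ is a bijection $\MF_n\to\R^{2n-4}\setminus\{0\}$), which holds in the all-trivalent / all-monogon case. The main obstacle is exactly this injectivity: one must verify that the intersection data with the arcs $\alpha_i,\beta_j$ suffices to recover the edge weights of any regular invariant train track compatible with the singularity hypothesis. This is a combinatorial statement about the relative position of the distinguished arcs and $\tau$, and it is precisely where the singularity hypothesis is used; the more delicate later theorems of the paper arise from accounting for the kernel and cokernel of $\Phi$ in settings where this bijectivity fails, producing the additional roots of unity and zero eigenvalues in the comparison between $D$ and $T$.
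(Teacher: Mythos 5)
There is a genuine gap, and it sits at the heart of your intertwiner $\Phi$. You assert that the measures $\mu_w(\alpha_i),\mu_w(\beta_j)$ are integer linear combinations of the edge weights of $\tau$, determined by crossing numbers, so that $\Phi=\rho\circ\phi_\tau$ is globally linear on $W(\tau)$. This is false in general: an arc $\gamma$ need not be tight against the carried foliation, and the correct formula is $\mu(\gamma)=\sum_i n_i\mu(e_i)-2\sum_{p\in\Pi'_\gamma}\hat{p}(\mu)$, where the corrections $\hat{p}(\mu)$ for non-tight train paths are themselves only piecewise linear (Lemmas \ref{piecewisemeasure} and \ref{piecewiselin}); Example \ref{trainmbedding} exhibits a complete invariant train track on $D_4$ with $a_1=\tfrac{\max(a,c)-b}{2}$, so even in the all-trigon/monogon case your map is genuinely piecewise linear, not linear. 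The actual content of the theorem is to show that the change of coordinates is linear \emph{on a neighbourhood of} $v^u=\phi_\tau^{-1}(\mathcal{F}^u,\mu^u)$, and this is where the pseudo-Anosov structure enters: by Lemma \ref{perronfoliation} the eigenmeasure $v^u$ is strictly positive and its pre-foliation has no prong connections, and by Remark \ref{notconnected} each $\hat{p}$ is linear near any such measure. Without this step you have neither the local intertwining relation $\Phi\circ T=D\circ\Phi$ (your ``hence everywhere by linearity'' has nothing to extend from) nor uniqueness of the Dynnikov matrix, since uniqueness is exactly the statement that the conjugated action is given by a single linear map on a full neighbourhood $L(U)$ of $(a^u,b^u)$; your separate uniqueness argument (``a direct local analysis near each prong will show that no such coincidence can occur'') is an unproved assertion, and it is precisely the kind of claim that fails once higher-order singularities are present, so it needs the same leaf-connection analysis, not a remark.

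By contrast, the step you single out as ``the main obstacle'' — injectivity of $\Phi$ — is not where the difficulty lies. The singularity hypothesis gives completeness of $\tau$, so $\mathcal{W}^+(\tau)$ has dimension $2n-4$ and $\mathcal{MF}(\tau)$ is a chart containing $(\mathcal{F}^u,\mu^u)$ in its interior; since $\phi_\tau$ and $\rho$ are homeomorphisms, the locally linear map $L=\rho\circ\phi_\tau$ is injective on an open set and hence invertible as a linear map, with no further combinatorial verification about the position of the Dynnikov arcs required. Once you replace your global-linearity claim by the local linearity argument at $v^u$, your scheme collapses onto the paper's proof: $D=L\circ T\circ L^{-1}$ on $\mathcal{R}=L(U)$, so $D$ is unique and isospectral to $T$.
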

 A similar result (Corollary \ref{anycomplete}) follows for general train tracks from the next two results. The proof of Lemma \ref{eski} is given in Section \ref{section2part2}. The author noticed that a similar proof was given in \cite{b10}. 

\begin{thm}[Rykken \cite{R99}]\label{rykken}
Let $f: M\to M$ be a pseudo\,-Anosov  homeomorphism on an orientable surface $M$ of genus $g$ with oriented unstable manifolds. Let $T$ be a train track transition matrix for $f$. If $f$ preserves the orientation of unstable manifolds, then the eigenvalues of $f_{1*}:H_1(M;\mathbb{R})\to H_1(M;\mathbb{R})$ are the same as those of $T$, including multiplicity, up to  roots of unity and zeros.
\end{thm}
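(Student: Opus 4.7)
The plan is to relate $T$ and $f_{1\ast}$ via two short exact sequences---one induced by the simplicial boundary on the graph $\tau$, and one induced by the inclusion $\tau\hookrightarrow M$---and then to check that the extra contributions at each stage contribute only roots of unity and zeros to the spectra.

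Fix an embedding of $\tau$ in $M$ and identify $T$ with the induced linear endomorphism of the simplicial chain space $C_1(\tau;\mathbb{R})$, the free real vector space on the oriented branches. The hypothesis that $f$ preserves the orientation of the unstable foliation ensures that each branch of $\tau$ carries a preferred orientation and that the matrix of $T$ with respect to this basis is exactly the standard non-negative train track transition matrix. Because $f$ sends switches to switches, $T$ commutes with the simplicial boundary $\partial\colon C_1(\tau;\mathbb{R})\to C_0(\tau;\mathbb{R})$, yielding the $T$-equivariant exact sequence
\begin{equation*}
0\longrightarrow H_1(\tau;\mathbb{R})\longrightarrow C_1(\tau;\mathbb{R})\xrightarrow{\ \partial\ }\partial C_1(\tau;\mathbb{R})\longrightarrow 0.
\end{equation*}
The induced action of $T$ on $C_0(\tau;\mathbb{R})$ is a permutation of the finite set of switches, so its restriction to the invariant subspace $\partial C_1(\tau;\mathbb{R})$ has all eigenvalues in $\{0\}\cup\{\text{roots of unity}\}$. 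Hence $\mathrm{spec}(T)$ and $\mathrm{spec}(T|_{H_1(\tau;\mathbb{R})})$ agree up to roots of unity and zeros, with multiplicities.

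Next, take a regular neighbourhood $N(\tau)\subset M$ that deformation retracts onto $\tau$, and set $U=\overline{M\setminus N(\tau)}$. Each component of $U$ is either a disk, a once-punctured disk, or a disk containing an unpunctured $k$-prong singularity of $(\mathcal{F}^u,\mu^u)$. The long exact sequence of the pair $(M,N(\tau))$ with real coefficients, which is $f_{1\ast}$-equivariant, takes the form
\begin{equation*}
H_2(M,N(\tau);\mathbb{R})\longrightarrow H_1(\tau;\mathbb{R})\xrightarrow{\ i_\ast\ }H_1(M;\mathbb{R})\longrightarrow H_1(M,N(\tau);\mathbb{R})\longrightarrow 0.
\end{equation*}
By excision the two outer terms split as direct sums, over components of $U$, of local homology groups of each polygonal region. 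Since $f$ permutes components of $U$ (mapping punctures to punctures and singularities to singularities of the same prong count) and preserves their orientations, the induced action on these outer terms is a signed permutation, with eigenvalues only in $\{0\}\cup\{\text{roots of unity}\}$. Consequently $\mathrm{spec}(f_{1\ast}|_{H_1(\tau;\mathbb{R})})$ and $\mathrm{spec}(f_{1\ast}|_{H_1(M;\mathbb{R})})$ also agree modulo roots of unity and zeros.

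Finally, the restriction $T|_{H_1(\tau;\mathbb{R})}$ is exactly the map on $H_1(\tau;\mathbb{R})$ induced by the graph map $f|_\tau\colon\tau\to\tau$, since passing from the chain-level map to its action on cycles is precisely the construction of the induced map on $H_1$; composing the two comparisons above then yields the theorem. The main obstacle I expect is verifying rigorously that the induced action on each complementary component is genuinely of finite order, rather than merely upper-triangular with roots of unity on the diagonal: one must track how the cyclic ordering of prongs at each singularity and each puncture is permuted by $f$ and rule out any hidden shearing of the local $H_1$. This is precisely the point where the orientability of $M$ and the preservation of orientation of the unstable foliation are indispensable, and it constitutes the technical heart of the argument.
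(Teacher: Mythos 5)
First, a point of orientation: the paper does not prove Theorem \ref{rykken} at all --- it is quoted from Rykken \cite{R99} and used as a black box (together with Lemma \ref{eski}) to obtain Corollary \ref{anycomplete}. So your attempt can only be measured against the standard argument, and your homological skeleton is indeed the natural one: compare the action on $C_1(\tau;\mathbb{R})$ with its restriction to $\ker\partial=H_1(\tau;\mathbb{R})$ via $0\to\ker\partial\to C_1(\tau;\mathbb{R})\to\partial C_1(\tau;\mathbb{R})\to 0$, then compare $H_1(\tau;\mathbb{R})$ with $H_1(M;\mathbb{R})$ through the pair $(M,N(\tau))$, whose relative homology is carried by the complementary polygons and so supports only a permutation-type action. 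That bookkeeping is essentially fine, modulo two small points: the induced vertex map is only a self-map of a finite set, not necessarily a permutation (its spectrum still lies in $\{0\}\cup\{\text{roots of unity}\}$ by eventual periodicity), and one must first isotope $f$ so that $f(N(\tau))\subseteq N(\tau)$ in order to have an honest equivariant pair.

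The genuine gap is at the step you dispose of in one sentence: identifying $T$ with the chain map $(f|_\tau)_\#$ on $C_1(\tau;\mathbb{R})$. The transition matrix counts occurrences of $e_i^{\pm1}$ in $f(e_j)$ \emph{without} sign, whereas the chain map counts them \emph{with} sign; in the notation of the paper's proof of Lemma \ref{eski}, $T=A+B$ while the chain map is $A-B$, and these genuinely differ in general --- that discrepancy is exactly why Lemma \ref{eski} has to pass to the orientation double cover. What the hypotheses of Theorem \ref{rykken} buy is precisely that the branches of $\tau$ can be coherently oriented by the oriented unstable foliation and that every image edge path $f(e_j)$ then traverses each branch in the positive direction (a smooth train path is coherently oriented, $f$ preserves the orientation of unstable leaves, and this is unchanged when $f(\tau)$ is projected to $\tau$ along the fibres of $N(\tau)$), so that $B=0$ and $T=(f|_\tau)_\#$; only after this does $T$ commute with $\partial$ and the rest of your argument make sense. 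You assert this but give no argument, and you then locate the ``technical heart'' in the complementary regions, where in fact only finiteness of the set of singularities and the fact that $f$ is an orientation-preserving homeomorphism are needed: for a closed surface the components of $M-N(\tau)$ are disks, so $H_1(M,N(\tau);\mathbb{R})=0$ and $H_2(M,N(\tau);\mathbb{R})$ is permuted with degrees $\pm1$, and no ``shearing'' can occur. In short, the orientation hypotheses are indispensable exactly at the step where you spent the least effort, and that step needs to be written out for the proof to stand.
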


\begin{lem}\label{eski}
Let $\beta$ be a pseudo\,-Anosov  isotopy class on $D_n$ with invariant train track $\tau$ and  associated transition matrix $T$. Let $\tilde{f}$ be the lift of $f$ to the orientation double cover $M$. Let $\tilde{\tau}$ and $\tilde{T}$ be the lifted invariant train track and transition matrix associated to $[\tilde{f}]$.
Then $T$ and $\tilde{T}$ are isospectral up to roots of unity. 
\end{lem}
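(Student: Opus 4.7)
My plan is to exploit the commutation of the deck involution $\iota\colon M\to M$ with $\tilde f$ in order to decompose $\tilde T$ into two blocks: one recovers $T$, while the other, via Rykken's theorem, contributes only roots of unity and zeros to the spectrum.

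First, because $\iota$ commutes with $\tilde f$, it induces a linear involution on $V(\tilde\tau)=\mathbb R^{E(\tilde\tau)}$ that commutes with $\tilde T$. Write $V(\tilde\tau)=V^+\oplus V^-$ for the $(\pm 1)$-eigenspace decomposition of this involution; both summands are $\tilde T$-invariant, so $\chi_{\tilde T}(x)=\chi_{T^+}(x)\,\chi_{T^-}(x)$ with $T^\pm:=\tilde T|_{V^\pm}$. The transfer map $\pi^\ast\colon V(\tau)\to V^+$, sending each edge $e$ of $\tau$ to the sum $\sum_{\tilde e\in\pi^{-1}(e)}\tilde e$ of its preimages, is a linear isomorphism; combined with the covering equivariance $\pi\circ\tilde f=f\circ\pi$ (which yields $\tilde T\,\pi^\ast=\pi^\ast\,T$), this identifies $T^+\cong T$. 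Hence $\chi_{\tilde T}(x)=\chi_T(x)\,\chi_{T^-}(x)$, and every eigenvalue of $T$ is an eigenvalue of $\tilde T$ with the same multiplicity.

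The heart of the argument is to show that $T^-$ has only roots of unity (and zero) as eigenvalues. By construction, the unstable foliation of $\tilde f$ on $M$ is oriented, so Rykken's theorem (Theorem~\ref{rykken}) applies: the nonzero, non-roots-of-unity eigenvalues of $\tilde T$ coincide (with multiplicity) with those of $\tilde f_{1\ast}$ on $H_1(M;\mathbb R)$. The weight-to-homology map $\phi\colon V(\tilde\tau)\to H_1(M;\mathbb R)$ used in Rykken's proof is built from the foliation orientation, and by definition of the orientation cover the deck transformation $\iota$ reverses this orientation; hence $\phi$ is $\iota$-anti-equivariant, sending $V^\pm$ into $H_1(M;\mathbb R)^\mp$. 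Under Rykken's correspondence, the nonzero non-roots-of-unity spectrum of $T^-$ therefore matches that of $\tilde f_{1\ast}|_{H_1(M;\mathbb R)^+}$. But via the transfer, $H_1(M;\mathbb R)^+$ is isomorphic to $H_1(D_n;\mathbb R)$, intertwining $\tilde f_{1\ast}|_{H_1(M)^+}$ with $f_{1\ast}$ on $H_1(D_n;\mathbb R)\cong\mathbb R^n$; since the braid $\beta$ abelianizes to the permutation of the puncture loops, its action on $H_1(D_n)$ has only roots of unity as eigenvalues. Therefore $T^-$ has no nonzero non-roots-of-unity eigenvalues, and $T$ and $\tilde T$ are isospectral up to roots of unity and zeros.

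The main obstacle is verifying the $\iota$-anti-equivariance of Rykken's weight-to-homology map; this is precisely where the defining property of the orientation cover, namely the reversal of the foliation orientation by $\iota$, enters in an essential way. Once that step is in place, the transfer isomorphism $H_1(M;\mathbb R)^+\cong H_1(D_n;\mathbb R)$ and the fact that braids act on $H_1(D_n)$ by permutations of puncture loops close the argument.
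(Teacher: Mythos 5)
Your first step is sound and is essentially the paper's own decomposition in different clothing: with the branches of $\tilde\tau$ written as the two lifts $e_i,e_i'$ of the branches of $\tau$, the lifted matrix is $\tilde T=\left(\begin{smallmatrix}A&B\\B&A\end{smallmatrix}\right)$ with $A+B=T$, and your splitting into the $\pm1$ eigenspaces of the deck involution is exactly the factorization $\chi(\tilde T)=\chi(T)\,\chi(A-B)$ that the paper obtains by row/column manipulation; your $T^-$ is $A-B$. The problem is the second step. You reduce everything to the claim that Rykken's weight-to-homology map is $\iota$-anti-equivariant and that Rykken's spectral comparison then holds \emph{blockwise}, i.e. that the non-root-of-unity spectrum of $T^-=\tilde T|_{V^-}$ coincides with that of $\tilde f_{1*}|_{H_1(M;\mathbb{R})^+}$. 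Neither assertion follows from the statement of Theorem \ref{rykken}, which only compares the full spectrum of $\tilde T$ with the full spectrum of $\tilde f_{1*}$ up to roots of unity and zeros; to get the refined, involution-equivariant version you would have to open up Rykken's proof, exhibit the specific intertwining map, verify its anti-equivariance, and control the kernel/cokernel of its restriction to each eigenspace. You flag this yourself as ``the main obstacle'' but do not carry it out, so as it stands the argument has a genuine gap at its central point. (A secondary issue: even if completed, your route only gives isospectrality up to roots of unity \emph{and zeros}, which is weaker than the lemma's statement, though enough for Corollary \ref{anycomplete}.)

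For contrast, the paper closes this step elementarily and without invoking Rykken at all: writing $A^{(m)},B^{(m)}$ for the positive and negative crossing counts of $f^m(e_j)$ over $e_i$, one has $(A-B)^m=A^{(m)}-B^{(m)}$, and choosing $m$ so that $f^m$ fixes the prongs of $\tau$, the fact that each real branch disconnects $\tau$ forces $A^{(m)}_{ij}=B^{(m)}_{ij}$ for $i\neq j$ and $A^{(m)}_{ii}=B^{(m)}_{ii}+1$, i.e. $(A-B)^m=\mathrm{Id}$. This gives directly that all eigenvalues of your $T^-$ are genuine roots of unity (no zeros), which is both sharper and self-contained. If you want to salvage your homological route, the missing content is precisely an equivariant strengthening of Rykken's theorem together with the transfer identifications you describe; that is a legitimate but substantially longer path.
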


Our following results are for the case when $(\mathcal{F}^u,\mu^u)$ has singularities other than unpunctured 3-pronged and punctured 1-
pronged singularities. Lemma \ref{chartrelation} will play a key role in proving our following results.

\begin{thm}\label{thm1}
Let $\beta\in B_n$ be a pseudo\,-Anosov braid with unstable invariant measured foliation $(\mathcal{F}^u,\mu^u)$ and dilatation $\lambda>1$. Let $\tau$  be a regular invariant train track of $\beta$ with associated transition matrix $T$. If $\beta$ fixes the prongs at all singularities other than unpunctured $3$-pronged and punctured $1$-pronged singularities, then any Dynnikov matrix $D_i$ is isospectral to $T$  up to some eigenvalues~$1$.
\end{thm}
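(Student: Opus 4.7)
The plan is to reduce the general case to the setting of Theorem \ref{firstthm} by identifying the action of $\beta$ on the ``extra'' degrees of freedom coming from non-generic singularities as the identity.

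First I would use the chart relation of Lemma \ref{chartrelation} to describe the Dynnikov coordinates on a neighbourhood of $(\mathcal{F}^u,\mu^u)$ in terms of weights on the invariant train track $\tau$. This should furnish a linear map $\Phi : \mathbb{R}^{E(\tau)} \to \mathcal{S}_n$ onto the Dynnikov region $\mathcal{R}_i$ which intertwines the actions of $T$ and $D_i$: $\Phi \circ T = D_i \circ \Phi$. Under the assumptions of Theorem \ref{firstthm}, $\Phi$ is essentially an isomorphism, which is how the clean isospectrality statement there arises; in our more general situation the presence of higher-order singularities will force $\Phi$ to have a nontrivial kernel.

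Next I would perform the local analysis at each singularity $s$ that is not of generic type, i.e.\ an unpunctured $k$-pronged singularity with $k\geq 4$ or a punctured $k$-pronged singularity with $k\geq 2$. The train track $\tau$ has extra local structure at the switches corresponding to such $s$ beyond the trivalent/monovalent pattern treated by Theorem \ref{firstthm}. The hypothesis that $\beta$ fixes each prong at $s$ means precisely that the induced train track map permutes this local structure trivially, so that these extra weights are mapped identically under $T$, producing an identity block. I would then argue that these identity blocks span $\ker \Phi$: the extra weights are exactly the ones that the chart relation identifies as being redundant (detecting only local rearrangements around $s$ that do not change any Dynnikov coordinate). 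Consequently $T$ decomposes, up to conjugation, in block-triangular form with diagonal blocks $D_i$ and an identity block, which yields the claimed isospectrality up to some eigenvalues equal to $1$.

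The main obstacle will be verifying that $\ker \Phi$ is genuinely $T$-invariant and that the restriction of $T$ there is exactly the identity, not merely a permutation. This is where the prong-fixing hypothesis is essential: without it, $T$ would permute the local coordinates at the non-generic singularities nontrivially and contribute roots of unity rather than just ones, and $\ker \Phi$ would typically fail to be $T$-invariant at all because prongs from different singularities could be exchanged. Once this invariance and identity action are established, the rest is linear algebra: a block-triangular matrix has spectrum equal to the union (with multiplicity) of the spectra of its diagonal blocks, so the nonzero, non-one part of the spectrum of $T$ matches that of $D_i$.
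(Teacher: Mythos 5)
There is a genuine gap, and it lies at the very first step: the direction of your dimension count is inverted. In the setting of Theorem \ref{thm1} the invariant train track $\tau$ is \emph{not} complete, so $\mathcal{W}^+(\tau)$ has dimension $\rank(\tau)<2n-4$ and $\mathcal{MF}(\tau)$ is a positive-codimension subset of $\mathcal{MF}_n$ which does not contain a neighbourhood of $(\mathcal{F}^u,\mu^u)$. Consequently the natural map $\rho\circ\phi_\tau:\mathcal{W}^+(\tau)\to\mathcal{S}_n$ is \emph{injective} with lower-dimensional image; it cannot be ``onto the Dynnikov region $\mathcal{R}_i$'' and has no kernel in which an identity block could live. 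Your proposed conclusion, that $T$ is block-triangular with diagonal blocks $D_i$ and $Id$, would give $\operatorname{spec}(T)\supseteq\operatorname{spec}(D_i)$, which is the reverse of what is true: $D_i$ is the \emph{larger} matrix. (In the paper's worked example for $\gamma=\sigma^2_1\sigma^2_2\sigma_1\sigma_2\sigma^2_3\sigma_2\sigma^4_1\sigma_2\sigma^2_1\sigma^2_3\sigma_2\sigma_1$, $T$ is $3\times 3$ with spectrum $\{1,17\pm12\sqrt{2}\}$ while $D$ is $4\times 4$ with spectrum $\{1,1,17\pm12\sqrt{2}\}$.) Relatedly, the extra eigenvalue-$1$ directions are not ``redundant weights'' at the switches of $\tau$: they correspond to measured foliations near $(\mathcal{F}^u,\mu^u)$ that are \emph{not} carried by $\tau$ at all.

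The missing idea is to enlarge $\tau$ rather than quotient it. The paper's proof pinches $\tau$ to a complete track $\tau_p$, so that $\mathcal{MF}(\tau_p)$ is a genuine chart containing $(\mathcal{F}^u,\mu^u)$ in its interior, and then covers it by the diagonal extensions $\tau_1,\dots,\tau_\xi$ of $\tau$ (Lemma \ref{chartrelation}(i),(ii)). The prong-fixing hypothesis enters exactly here, but applied to these extensions: it guarantees $\beta(\mathcal{MF}(\tau_i))=\mathcal{MF}(\tau_i)$ for each $i$ (rather than permuting the charts, which is the obstruction in Question \ref{quest}), and with respect to a suitable basis of $\mathcal{W}^+(\tau_i)\cong\mathbb{R}^{2n-4}$ the induced action is $\tilde{T}=\left(\begin{smallmatrix} T & X\\ 0 & Id\end{smallmatrix}\right)$, the identity block acting on the weights of the \emph{added} diagonal branches. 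By Lemma \ref{chartrelation}(iv) the coordinate change $L_i=\rho\circ\phi_{\tau_i}$ is linear and invertible near $v^u$, so $D_i=L_i\tilde{T}L_i^{-1}$, and the spectrum of $D_i$ is that of $T$ together with some eigenvalues $1$. So while your intuition that ``prong-fixing forces an identity rather than a permutation block'' is the right mechanism, it must be implemented on the completed tracks $\tau_i$, where the conjugation is by an honest isomorphism, not via a surjection from the weights of the non-complete $\tau$, which does not exist.
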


Again a similar result (Corollary \ref{anytraintrack2}) follows for general train tracks from Theorem \ref{rykken} and Lemma \ref{eski}. The next theorem shows that if $D_n-\tau$ has only odd-gons all of the Dynnikov matrices are equal and hence there is only one Dynnikov region in the fixed-pronged case.

\begin{thm}\label{uniquematrix}
Let $\beta\in B_n$ be a pseudo\,-Anosov braid with unstable invariant measured foliation $(\mathcal{F}^u,\mu^u)$ and dilatation $\lambda>1$. Let $\tau$  be a regular invariant train track of $\beta$ with associated transition matrix $T$. If all components of $D_n-\tau$ are odd-gons and $\beta$ fixes the prongs at all singularities other than unpunctured $3$-pronged and punctured $1$-pronged singularities, then there is a unique Dynnikov region. 
\end{thm}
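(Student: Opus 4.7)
The plan is to prove Theorem \ref{uniquematrix} by showing that under the stated hypotheses the point $p^u = (a^u, b^u) \in \cS_n$ lies in the \emph{interior} of a single polyhedral piece of the piecewise-linear subdivision of $\cS_n$ induced by the update rules of Theorem \ref{lem:update}. By Definition \ref{dynndef}, multiple Dynnikov regions can arise only when $p^u$ lies on the boundary of two or more such pieces, which happens precisely when at least one of the tropical maxima appearing in the update formulas for $\beta$ is attained by two distinct terms. So the goal is to rule out every such coincidence at $p^u$, using the hypothesis that all complementary components of $D_n - \tau$ are odd-gons.

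First I would invoke Lemma \ref{chartrelation} to write each $a^u_i$ and $b^u_i$ as a signed linear combination of the transverse weights of $(\cF^u,\mu^u)$ on the branches of the invariant train track $\tau$. Geometrically, a sub-arc of $\alpha_j$ or $\beta_j$ between two consecutive intersections with $\tau$ carries a measure equal to the weight on the branch it crosses, and the two individual terms inside any tropical maximum in Theorem \ref{lem:update} correspond to two competing candidate sub-arcs of the same arc $\alpha_j$ or $\beta_j$. An equality of two terms therefore translates into an equality of two weighted sums of branches of $\tau$, localized near a single complementary region.

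Second, I would analyze those equalities region by region. Each complementary region is either a punctured or unpunctured $k$-gon with $k$ odd, and the weights on the $k$ branches on its boundary are subject to the switch conditions of $\tau$ together with the combinatorics of the cusps. The sort of equality needed to place $p^u$ on a wall amounts to matching the weights around the polygon in two opposite cyclic directions, equivalently pairing the cusps in a fixed-point-free involution; such an involution exists on a cyclic set of size $k$ only when $k$ is even, so the odd-gon hypothesis gives a strict inequality at each region. The fixed-prong assumption at the non-trivial singularities is what makes this local analysis legitimate: without it, $\beta$ could cyclically permute the prongs and hence the regions, so that even though each individual update rule evaluation is unambiguous, the \emph{global} sequence of regions entering the formula for $\beta$ would not be unique; fixing the prongs guarantees that the same branch contributes dominantly throughout.

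The main obstacle I expect is the second step: although the intuition that ``odd parity forbids a balanced pairing'' is clean, translating it into a rigorous argument requires a careful enumeration of the terms inside the update rules at $i = 1$, $2 \le i \le n-2$, and $i = n-1$, identifying which complementary region of $\tau$ each competing term lives in, and checking in each case that a hypothetical equality would force an even-length cycle among the cusps. It may be cleanest to isolate a lemma of the form ``tropical ties in the update rule for $\sigma_i$ near $p^u$ correspond bijectively to involutions on cusps of one complementary region of $\tau$,'' after which Theorem \ref{uniquematrix} follows immediately from the odd-gon hypothesis.
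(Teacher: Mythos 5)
Your plan rests on a claimed equivalence that is stronger than the theorem and is not what the odd-gon hypothesis actually delivers. You assert that multiple Dynnikov regions occur ``precisely when'' some tropical maximum in the update rules is attained by two distinct terms at $p^u$, and you then try to rule out all such ties. But a tie does not force distinct Dynnikov regions: two adjacent pieces on which the formulas branch differently may carry the same linear map, and the content of Theorem \ref{uniquematrix} in the paper is exactly that this happens. When $\tau$ is not complete, $(\mathcal{F}^u,\mu^u)$ lies on the common boundary of the $\xi>1$ charts $\mathcal{MF}(\tau_i)$ of the diagonal extensions (Lemma \ref{chartrelation}), so $p^u$ sits at a genuinely non-generic point of $\cS_n$ where degenerate coordinate equalities can and do occur (compare Example 2.17, where $b=c$, i.e.\ $b_2=0$, at $v^u$, yet the Dynnikov matrix is unique). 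So ``no ties at $p^u$'' is both more than you need and not something the hypotheses guarantee; moreover, since $\beta$ is a word in the generators, you would have to exclude ties not only at $p^u$ but at every intermediate image of $p^u$ under the prefixes of the word, which your region-by-region analysis of $D_n-\tau$ does not address. The key combinatorial step you propose --- that a tie corresponds to a fixed-point-free involution on the cusps of a single complementary polygon, impossible for odd $k$ --- is asserted rather than proved, and I see no reason it should hold: ties in Theorem \ref{lem:update} are equalities of intersection numbers with the arcs $\alpha_i,\beta_i$, and the competing terms generally involve branches belonging to different complementary regions and the global embedding of $\tau$, not the cusps of one polygon.

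The place where the odd-gon hypothesis really enters is different, and your proposal never touches it. By Lemma \ref{inducedaction}, when all components of $D_n-\tau$ are odd-gons one can choose a basis of $\mathcal{W}(\tau)$ supported on the main branches; consequently each coordinate change $L_i=\rho\circ\phi_{\tau_i}$ (linear near $v^u$ by Lemma \ref{chartrelation}(iv)) has the block form $(L\mid X_i)$ with the \emph{same} first block $L$ for every diagonal extension $\tau_i$, and similarly $L_i^{-1}$ has a common block $A$; in addition the matrix $X$ in Lemma \ref{chartrelation}(iii) vanishes, so $\beta$ acts by $\tilde{T}$ with blocks $T$ and $Id$ and zero off-diagonal part. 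From $L_i^{-1}L_i=Id$ one gets $AX_i=0$ and $Y_iL=0$, hence $L_j^{-1}L_i$ is block diagonal with $Id_k$ in the upper corner, so it commutes with $\tilde{T}$, and therefore $D_i=L_i\tilde{T}L_i^{-1}=L_j\tilde{T}L_j^{-1}=D_j$ for all $i,j$: all Dynnikov matrices coincide, which is what ``unique Dynnikov region'' means here. In short, the correct strategy is to prove that the linear maps on the (possibly several) pieces meeting at $p^u$ agree, not that the pieces do not meet at $p^u$; your proposal would need an entirely new (and, I believe, unavailable) argument to carry out its second step, so as it stands it has a genuine gap.
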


We note that the Dynnikov matrices and train track transition matrices throughout this paper  were computed using Dynnikov and train track programs implemented by Toby Hall both of which can be found at \cite{toby}.

In the next section we shall review some necessary background from \cite{bh95, mosher1, penner} that are necessary to prove our results in Section \ref{mainresults}.
\section{Proof of theorems \ref{firstthm}, \ref{thm1} and \ref{uniquematrix}}\label{section2}

\subsection{Train track coordinates and transition matrices}\label{subsection21}

 A \emph{train track} $\tau$ on $D_n$ is a one dimensional CW complex made up of vertices (\emph{switches}) and edges (\emph{branches}) smoothly embedded on $D_n$ such that at each switch there is a unique tangent vector, and every component of $D_n-\tau$ is either a once-punctured $p$-gon with $p\geq 1$ or an unpunctured $k$-gon with $k\geq 3$ (where the boundary of $D_n$ is regarded as a puncture). A train track $\tau$ is called \emph{complete} if each component of $D_n-\tau$ is either a trigon or a once punctured monogon. A transverse measure on $\tau$ is a function which assigns a  measure to each branch of~$\tau$ such that these measures satisfy the \emph{switch conditions} at each switch of $\tau$. That is, for each switch $v$ of $\tau$ $$\displaystyle\sum_{\text{incoming~ branches ~at}~ v}\mu(e)=\displaystyle\sum_{\text{outgoing ~branches~ at}~ v}\mu(e)$$
 Train tracks equipped with a transverse measure  are called \emph{measured train tracks}: they provide another way to coordinatize measured foliations and integral laminations. We denote by $\mathcal{W}(\tau)$  and $\mathcal{W}^+(\tau)$ the space of transverse measures and non-negative transverse measures associated to $\tau$.

\subsection*{Constructing measured foliations from train tracks} Given a measured train track $\tau$ define a function $\phi_\tau:\mathcal{W}^+(\tau)\to \mathcal{MF}_n$ as follows: Replace each branch $e_i$ of $\tau$ which has non-zero measure with a Euclidean rectangle $R_i$ of length $1$ and height $\mu(e_i)$ and endow each $R_i$ with a ``horizontal" measured foliation where the transverse measure is induced from the Euclidean metrics on the rectangles. At each switch glue the vertical sides of the rectangles and denote this union of glued rectangles $\mathcal{R}^*$. Since $\tau$ satisfies the switch condition at each switch there is a unique measure preserving way to glue together the horizontal leaves, hence there is  a well defined transverse measure on $\mathcal{R}^*$. A \emph{pre-foliation} $\mathcal{F}^*$ is the collection of leaves on $\mathcal{R}^*$. Collapsing each component of $D_n-\mathcal{F}^*$ which doesn't contain any branch of zero measure onto a spine yields a  measured foliation $\phi_\tau(\mu)=(\mathcal{F},\mu)$ \cite{bh95,penner}. We say that $(\mathcal{F},\mu)\in \mathcal{MF}_n$ is \emph{carried} by $\tau$ if it arises from some transverse measure $\mu$ on $\tau$ in this way. We write $\mathcal{MF}(\tau)=\phi_{\tau}(\mathcal{W}^+(\tau))$ for the set of measured foliations carried by~$\tau$ and $\mathcal{PMF}(\tau)$ for the corresponding projective space. 
\begin{remark}
Note that if $(\mathcal{F},\mu)$ has only $1$-pronged singularities at punctures and $3$-pronged singularities elsewhere it is carried by a complete train track. It is carried by a non-complete train track otherwise.
\end{remark}

Since $\mathcal{MF}_n$ and $\mathcal{PMF}_n$ are homeomorphic to $\mathbb{R}^{2n-4}\setminus\{0\}$ and $\mathbb{S}^{2n-5}$ respectively, $\mathcal{MF}(\tau)$ and $\mathcal{PMF}(\tau)$ have the subspace topology. Furthermore, the functions $\phi_{\tau}:\mathcal{W}^{+}(\tau)\to\mathcal{MF}(\tau) $ and $\hat{\phi}_{\tau}:\mathcal{PW}^+(\tau)\to\mathcal{PMF}(\tau)$ are homeomorphisms where $\mathcal{W}^+(\tau)$ has the subspace and $\mathcal{PW}^+(\tau)$ has the quotient topology. Let  $\rank({\tau})$ denote the dimension of $\mathcal{W}(\tau)$. The switch conditions on $\tau$ are linearly independent and hence $\rank(\tau)=k-s$ where $k$ is the number of branches and $s$ the number of switches of $\tau$. Therefore, $\mathcal{W}(\tau) \cong\mathbb{R}^{k-s}\setminus \{0\}$ and $\tau$ is complete if and only if $\rank(\tau)=2n-4$. That is,  $\tau$ is complete if and only if $\rank(\tau)$ is the same as the dimension of ${\mathcal{MF}}_n$. The complete train tracks on $D_n$ give an atlas for the piecewise integral linear structure of $\mathcal{MF}_n$ and $\mathcal{PMF}_n$. That is, the transition functions between charts are piecewise linear with integer coefficients \cite{mosher1, penner}.

\begin{mydefs}\label{carryingmap}
Endowing a regular neighborhood $N_{\tau}$ of $\tau$ with fibres of the retraction $r:N_{\tau}\searrow\tau$, we obtain a \emph{fibred neighbourhood} $N_\tau$ of $\tau$. Let $\tau$ and $\tau'$ be two train tracks on $D_n$. We say that $\tau$ is \emph{carried} by $\tau'$ and write $\tau<\tau'$ if there is a homeomorphism $\psi:D_n\to D_n$ isotopic to the identity such that
\begin{itemize}
\item $\psi(\tau)\subseteq N_{\tau'}$,
\item Each branch of $\psi(\tau)$ is transverse to the fibers in $N_{\tau'}$,
\item for each branch $e_i$ of $\tau$ the end points of $\psi(e_i)$ are contained in singular leaves of $N_{\tau'}$.
\end{itemize}
Let $\{e_i\}_{1\leq i\leq k}$  and $\{f_i\}_{1\leq i\leq k'}$ be the oriented branches of $\tau$ and $\tau'$ respectively. Let $r':N_{\tau'}\to\tau'$ be the retraction. For each $1\leq i\leq k$, $r'(\psi(e_i))$ is an edge path in $\tau'$: $r'(\psi(e_i))=f^{\epsilon_1}_{i_1}f^{\epsilon_2}_{i_2}\dots f^{\epsilon_s}_{i_s}$,  ${\epsilon_j}=\pm 1$. The \emph{incidence matrix} associated to $\tau$ and $\tau'$ is the $k'\times k$ matrix $G:\mathcal{W}(\tau)\to\mathcal{W}(\tau')$ whose $ij^{\text{th}}$ entry $G_{ij}$ is given by  the number of occurences of  $f^{\pm 1}_i$ in $r'(\psi(e_j))$.
\end{mydefs}

\begin{lem}[\cite{mosher1}]\label{commutingdiagram}
Let $\tau<\tau'$. Then $\mathcal{MF}(\tau)\subset\mathcal{MF}(\tau')$ and the following diagram commutes:

$$\begin{array}[c]{ccc}
\mathcal{W}^+(\tau)&\stackrel{G}{\longrightarrow}&\mathcal{W}^+(\tau')\\
\downarrow\scriptstyle{\phi_{\tau}}&&\downarrow\scriptstyle{\phi_{\tau'}}\\
\mathcal{MF}(\tau)&{\hookrightarrow}&\mathcal{MF}(\tau').
\end{array}$$
\end{lem}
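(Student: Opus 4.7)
The plan is to establish the three assertions in the order (a) $G$ maps $\mathcal{W}^+(\tau)$ into $\mathcal{W}^+(\tau')$, (b) the equality $\phi_{\tau'}\circ G=\phi_\tau$, and (c) the inclusion $\mathcal{MF}(\tau)\subset \mathcal{MF}(\tau')$, noting that (c) follows from (a)+(b) as soon as we observe that $\phi_\tau(\mathcal{W}^+(\tau))\subseteq \phi_{\tau'}(\mathcal{W}^+(\tau'))$. Throughout I will work with the fibred neighborhood $N_{\tau'}$ and its retraction $r':N_{\tau'}\to \tau'$, using that $\psi(\tau)$ sits inside $N_{\tau'}$ with branches transverse to ties and endpoints on singular leaves.

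For (a), I would fix a switch $v'$ of $\tau'$ and choose a tie $\ell$ of $N_{\tau'}$ through $v'$. Pushing $\ell$ slightly to each of the two ``sides'' of $v'$ gives ties $\ell^{\text{in}}$ and $\ell^{\text{out}}$ that meet, respectively, the incoming and outgoing branches of $\tau'$ at $v'$. Because every branch $\psi(e_j)$ is transverse to the ties and terminates on singular leaves, the intersections $\psi(\tau)\cap \ell^{\text{in}}$ and $\psi(\tau)\cap \ell^{\text{out}}$ are in bijection with the traversals of the corresponding branches $f_i$ in the edge paths $r'(\psi(e_j))$, so the total $\mu$-mass crossing $\ell^{\text{in}}$ is $\sum_{f_i \text{ incoming}} G(\mu)(f_i)$ and similarly for $\ell^{\text{out}}$. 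These two sums agree because each branch $\psi(e_j)$ either passes cleanly from $\ell^{\text{in}}$ to $\ell^{\text{out}}$ contributing the same mass to each side, or splits at a switch of $\tau$ lying in the ``lane'' over $v'$, in which case the original switch condition on $\mu$ exactly balances the contributions. Hence $G(\mu)$ satisfies the switch condition at every switch of $\tau'$.

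For (b), I would compare the two rectangle constructions geometrically. Build $\mathcal{R}^*_\tau$ from rectangles $R_j$ of width $1$ and height $\mu(e_j)$, and build $\mathcal{R}^*_{\tau'}$ from rectangles $R'_i$ of width $1$ and height $G(\mu)(f_i)=\sum_j G_{ij}\mu(e_j)$. Using $\psi$, each image $\psi(R_j)$ can be subdivided into $\sum_i G_{ij}$ vertical subrectangles, one per traversal $f^{\epsilon_t}_{i_t}$ appearing in $r'(\psi(e_j))=f^{\epsilon_1}_{i_1}\cdots f^{\epsilon_s}_{i_s}$; each subrectangle embeds in $N_{\tau'}$ so that its horizontal foliation is transverse to the ties. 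Regrouping these subrectangles by their target branch $f_i$ and stacking them along $f_i$ yields a rectangle of height exactly $\sum_j G_{ij}\mu(e_j) = G(\mu)(f_i)$, i.e.\ the rectangle $R'_i$. The induced horizontal foliations agree on overlaps, so the two pre-foliations $\mathcal{F}^*_\tau$ and $\mathcal{F}^*_{\tau'}$ are isotopic, and collapsing their complementary components (none of which contain a branch of zero measure in the images) produces the same element of $\mathcal{MF}_n$. This proves $\phi_{\tau'}(G(\mu))=\phi_\tau(\mu)$, and (c) is then immediate since any foliation of the form $\phi_\tau(\mu)$ is also of the form $\phi_{\tau'}(G(\mu))$.

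The main difficulty I expect is the bookkeeping around the signs $\epsilon_t=\pm 1$ when $\psi(e_j)$ backtracks along a branch of $\tau'$, and around the moments when subrectangles stacked on $R'_i$ meet the switches of $\tau'$: one must verify that the transverse measure on $R'_i$ is well defined independently of the stacking order and that gluings at switches of $\tau'$ patch consistently. The switch conditions established in step (a) are precisely what make these patchings coherent, and isotopy invariance of the resulting measured foliation class absorbs any ambiguity in the vertical subdivision, closing the argument.
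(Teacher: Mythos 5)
The paper does not actually prove this lemma: it is quoted verbatim from Mosher's train track expansions notes (reference \cite{mosher1}), so there is no internal argument to compare yours against. Your outline is, in substance, the standard proof of the carrying relation and is essentially correct. For (a), the tie-by-tie count is the right mechanism: since each branch of $\psi(\tau)$ is transverse to the ties it is monotone across each branch strip of $N_{\tau'}$, so the $\mu$-mass crossing a tie in the strip over $f_i$ is exactly $\sum_j G_{ij}\mu(e_j)$, and the mass crossing nearby ties on the two sides of a switch of $\tau'$ agrees because the only transitions in between occur at switches of $\tau$, where the switch conditions on $\mu$ conserve mass; the switch conditions for $G\mu$ follow, and (c) is then immediate from (a) and (b) as you say. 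Two points need more care than your sketch gives them, though neither is a fatal gap. First, in (b) the assertion that the two pre-foliations are \emph{isotopic} is literally too strong: the subrectangles of the $\psi(R_j)$ lying over a given branch $f_i$ are separated inside that branch strip by complementary gaps of $\psi(\tau)$, and assembling them into the single rectangle $R_i'$ of height $G\mu(f_i)$ requires collapsing these gaps; the correct statement is that, after the collapsing of complementary components built into the definition of $\phi_\tau$ and $\phi_{\tau'}$, the two constructions give the same class in $\mathcal{MF}_n$ (equivalently: realize $\phi_\tau(\mu)$ inside $N_{\tau'}$ transverse to the ties, observe it crosses the ties over $f_i$ with total mass $G\mu(f_i)$, and conclude it is carried by $\tau'$ with weights $G\mu$). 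Second, when $\mu$ (hence possibly $G\mu$) has zero entries, the bookkeeping must account for the fact that $\phi_\tau$ discards zero-weight branches before foliating, which your parenthetical glosses over; this is routine but should be said. With those repairs your argument is complete and coincides with the proof found in Mosher and in Penner--Harer, which the paper simply cites.
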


\begin{mydef}
A train track $\tau$ is \emph{invariant} under $\beta\in B_n$, if $\beta(\tau)$ is carried by $\tau$. Let $\tau$ be an invariant train track of $\beta$ and $e_1,\dots,e_k$ be the oriented branches of $\tau$. Then, for each $1\leq i \leq k$, $r(\psi(\beta(e_i)))$ is of the form $r(\psi(\beta(e_i)))=e^{\epsilon_1}_{i_1}e^{\epsilon_2}_{i_2}\dots e^{\epsilon_k}_{i_k}$,  ${\epsilon_j}=\pm 1$. The \emph{transition matrix} $T$ associated to $\tau$ is the $k\times k$ incidence matrix $T:\mathcal{W}(\beta(\tau))\to \mathcal{W}(\tau)$ described as in Definitions \ref{carryingmap}.
\end{mydef}

\begin{thm}[\cite{bh95}]\label{bh1}
Every pseudo\,-Anosov  braid $\beta\in B_n$ has an invariant train track $\tau$. This train track $\tau$ can be chosen so that

\begin{itemize}
 \item The branches which bound interior $p$-gons (i.e. those which are disjoint from~$\partial D_n$) are permuted by $\beta$
 \item The transition matrix is of the form 
$$
T'= \left(\begin{array}{cc}
T & 0 \\
A& P \\
\end{array} \right)$$
where $P$ is a permutation matrix giving the action on the permuted branches and $T$ is the matrix that gives the action on the other branches.
\item For each $p$, there are the same number of unpunctured (resp. punctured) $p$-gons in $\tau$ as there are unpunctured (resp. punctured) $p$-pronged singularities in $(\mathcal{F}^u,\mu^u)$ (this includes the ``exterior" punctured $p$-gon and the singularity at infinity).
\end{itemize}
\end{thm}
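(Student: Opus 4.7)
The plan is to invoke the Bestvina--Handel algorithm. Start with a topological graph $G$ that is a spine of $D_n$ together with a continuous self-map $g \colon G \to G$ whose free homotopy class represents $\beta$. Apply the moves of the algorithm (subdivision, folding, collapsing invariant forests, pulling tight) to produce an \emph{efficient} graph map; because $\beta$ is pseudo-Anosov, the procedure terminates at a graph map whose transition matrix is Perron--Frobenius with eigenvalue $\lambda$. Construct the invariant train track $\tau$ by smoothing $G$ at each vertex according to the gate structure determined by $g$: two edge-ends at a vertex are smoothed together precisely when their initial directions become identified under some iterate of $Dg$. The image $g(\tau)$ is then homotopic into the fibred neighbourhood $N_\tau$ with branches transverse to the fibres, so $\beta(\tau) < \tau$, and the incidence matrix of the carrying is the transition matrix $T'$.

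For the first bullet, note that the interior branches (those disjoint from $\partial D_n$) form a closed $\beta$-invariant subgraph $\Gamma \subset \tau$, since the boundary of an interior $p$-gon must be sent to the boundary of another interior $p$-gon. Efficiency of $g$ forbids an interior branch from being carried across a branch adjacent to $\partial D_n$ (this would unfold an interior $p$-gon and decrease the complexity, contradicting that the algorithm has terminated). Together with the fact that no backtracking occurs in an efficient map, this forces the image of an interior branch to be a single interior branch, so the interior branches are permuted. Writing $T'$ with the permuted branches indexed last, the lower-right block is the associated permutation matrix $P$, and the upper-right block is $0$ because images of permuted branches involve no non-permuted branches; this establishes the second bullet.

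For the third bullet, build $(\mathcal{F}^u, \mu^u)$ from the invariant transverse measure on $\tau$ via the rectangle-thickening procedure of Section \ref{subsection21}: thicken each branch $e_i$ to a horizontal rectangle of height $\mu^u(e_i)$, glue along vertical sides at switches to obtain $\mathcal{R}^*$, and then collapse each component of $D_n \setminus \mathcal{R}^*$ onto a spine. A complementary $p$-gon collapses to a single singularity with one prong pointing toward each cusp, producing an unpunctured $p$-pronged singularity; a once-punctured $p$-gon collapses to a punctured $p$-pronged singularity; the component containing $\partial D_n$ contributes the singularity at infinity. This collapse is a bijection between complementary components of $\tau$ and singularities of $(\mathcal{F}^u, \mu^u)$ that preserves the $p$-value, so the counts agree. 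The main obstacle in this whole program is showing that the Bestvina--Handel algorithm actually terminates in the pseudo-Anosov case at an efficient graph map whose induced smoothing is a genuine train track; this is the content of the complexity argument of \cite{bh95}, which is delicate and requires a well-ordered complexity that strictly decreases under each move.
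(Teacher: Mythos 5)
The paper does not prove this statement at all: it is quoted as background, with the proof delegated to \cite{bh95}, so there is no internal argument to compare yours with. Your outline follows the same Bestvina--Handel route as the cited source (efficient graph map, smoothing by gates, rectangle collapse onto spines), and you correctly identify that the real substance --- termination of the algorithm at an efficient map --- is the complexity argument of \cite{bh95}; deferring that is consistent with how the paper itself treats the theorem.

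Two of the steps you do argue yourself are weaker than they appear. For the first bullet, the opening claim that ``the boundary of an interior $p$-gon must be sent to the boundary of another interior $p$-gon'' presupposes what is to be shown: $\beta$ does not map $\tau$ to itself, it only carries $\beta(\tau)$ into $N_{\tau}$, so a priori the image of an interior branch is an edge path, and the ``unfolding an interior $p$-gon would decrease complexity'' heuristic is not the mechanism by which the permutation is established. In the Bestvina--Handel construction the interior (infinitesimal) branches are created at the periodic vertices of the efficient graph map, one joining each pair of adjacent gates, and the induced map on directions gives a bijection of gates at such vertices; the fact that these branches are permuted, and hence the zero upper-right block of $T'$, is built into the construction of $\tau$ rather than deduced afterwards from efficiency. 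For the third bullet, the asserted bijection between complementary components of $\tau$ and singularities of $(\mathcal{F}^u,\mu^u)$ under the collapse of $D_n-\mathcal{F}^*$ requires that the transverse measure given by the Perron--Frobenius eigenvector be strictly positive on every branch and that no prongs of the pre-foliation $\mathcal{F}^*$ join (this is precisely the content of Lemma \ref{perronfoliation} in the paper); without these inputs the collapse can merge complementary regions or alter prong counts, and the equality of the number of $p$-gons with the number of $p$-pronged singularities can fail. You invoke the collapse but never justify these hypotheses, so that step needs the positivity and no-leaf-connection facts made explicit.
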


We shall call a train track $\tau$ of the type in Theorem~\ref{bh1} a \emph{regular train track} \cite{bh95}. A branch of a regular train track is called \emph{infinitesimal} if it is permuted under the action of $\beta$ (that is, if it bounds an interior $p$-gon), it is called \emph{main} otherwise.

\begin{figure}[h!]
\begin{center}
\psfrag{1/2}[tl]{$\scriptstyle{\frac{1}{2}}$} 
\psfrag{-1/2}[tl]{$\scriptstyle{-\frac{1}{2}}$} 
\psfrag{0}[tl]{$\scriptstyle{0}$} 
\includegraphics[width=0.7\textwidth]{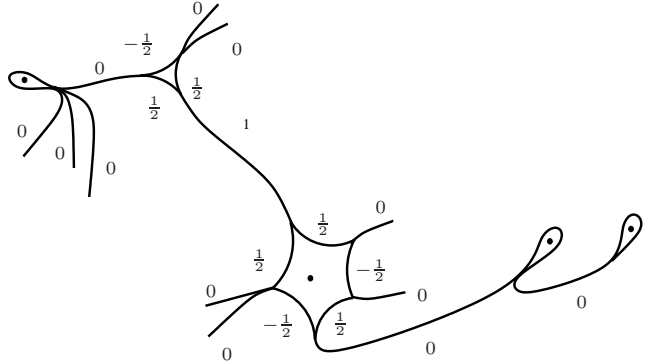}
\caption{Constructing a basis for $\mathcal{W}(\tau)$ when $D_n-\tau$ consists of odd-gons}\label{basis}
\end{center}
\end{figure}

\begin{lem}[\cite{bh95}]\label{perronfoliation}
Let $\beta\in B_n$ be a  pseudo\,-Anosov  braid with dilatation $\lambda$, unstable foliation $(\mathcal{F}^u,\mu^u)$ and invariant regular train track $\tau$ with associated transition matrix $T'$. 
The largest eigenvalue of $T'$ equals $\lambda$ and the entries of the unique associated column eigenvector $v^u$ (up to scale) are strictly positive. $v^u$ defines a transverse measure on $\tau$ which yields a pre-foliation $\mathcal{F}^*$ as described above whose prongs do not join and from which $(\mathcal{F}^u,\mu^u)$ is constructed. That is, $\phi_{\tau}(v^u)=(\mathcal{F}^u,\mu^u)$.
\end{lem}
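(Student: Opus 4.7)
\begin{proof-sketch}
The plan is to establish three things in turn: (i) $\lambda$ is an eigenvalue of $T'$ with an eigenvector $v^u\in\mathcal{W}^+(\tau)$ coming from the unstable foliation; (ii) $\lambda$ is the \emph{largest} eigenvalue, uniquely so, and $v^u$ has strictly positive entries; and (iii) the pre-foliation $\mathcal{F}^*$ built from $v^u$ has no joining prongs, so that $\phi_\tau(v^u)$ actually equals $(\mathcal{F}^u,\mu^u)$.

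For (i), I would begin from the output of the Bestvina--Handel construction: the invariant regular train track $\tau$ is built precisely so as to carry $(\mathcal{F}^u,\mu^u)$, so there exists $v^u\in\mathcal{W}^+(\tau)$ with $\phi_\tau(v^u)=(\mathcal{F}^u,\mu^u)$. Applying the commutative diagram of Lemma~\ref{commutingdiagram} to the carrying $\beta(\tau)<\tau$, together with the linearity of $\phi_\tau$ and the relation $\beta(\mathcal{F}^u,\mu^u)=(\mathcal{F}^u,\lambda\mu^u)=\phi_\tau(\lambda v^u)$, the injectivity of $\phi_\tau$ forces $T'v^u=\lambda v^u$.

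For (ii), I would use the block form $T'=\left(\begin{array}{cc} T & 0 \\ A & P \end{array}\right)$ from Theorem~\ref{bh1}. The permutation matrix $P$ has spectrum on the unit circle, so any eigenvalue of $T'$ of modulus exceeding $1$ is in fact an eigenvalue of $T$. The matrix $T$ has non-negative integer entries; applying Perron--Frobenius then reduces the claim to irreducibility of $T$, which I would derive from the minimality of the pseudo-Anosov dynamics: iterates of $\beta$ spread each main branch across every other main branch, making the directed graph of $T$ strongly connected. Perron--Frobenius produces a unique largest eigenvalue of $T$ with strictly positive eigenvector, which must coincide with $\lambda$ by (i). The infinitesimal components of $v^u$ are also positive because, by the third bullet of Theorem~\ref{bh1}, each interior $p$-gon of $\tau$ corresponds to an honest $p$-pronged singularity of $(\mathcal{F}^u,\mu^u)$, so every infinitesimal branch must carry nonzero measure.

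For (iii), two prongs in $\mathcal{F}^*$ could join under the collapsing operation only if a singular leaf of $(\mathcal{F}^u,\mu^u)$ ran from a singularity back to itself, producing a saddle connection. But the unstable foliation of a pseudo-Anosov is minimal and hence admits no saddle connections, so the prongs stay distinct and the collapsing produces exactly $(\mathcal{F}^u,\mu^u)$. I expect the principal obstacle to be step (i): that the train track produced by Bestvina--Handel actually carries $(\mathcal{F}^u,\mu^u)$ is the substantive output of their algorithm rather than a formal consequence of invariance, and would need to be imported from that theory. A secondary obstacle is ruling out a non-trivial block decomposition of $T$ when invoking Perron--Frobenius, which requires genuinely exploiting the topological transitivity of the pseudo-Anosov action on its unstable foliation.
\end{proof-sketch}
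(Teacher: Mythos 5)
The paper offers no internal proof of Lemma \ref{perronfoliation} to compare against: it is stated with attribution to \cite{bh95} and used as an imported black box, so your sketch is a reconstruction of Bestvina--Handel's result rather than an alternative to an argument in the text. As a reconstruction it is organised sensibly, but note that in \cite{bh95} the logic runs in the opposite direction to your step (i): the transition matrix of the efficient train track map is shown to be irreducible, the unstable foliation is \emph{constructed} from its Perron--Frobenius eigenvector (widths, together with the eigenvector of the transpose for heights), and only afterwards identified with $(\mathcal{F}^u,\mu^u)$ by uniqueness of the invariant foliations of a pseudo-Anosov map. Your step (i) therefore assumes exactly what the cited construction produces (the carrying $\phi_\tau(v^u)=(\mathcal{F}^u,\mu^u)$), and your step (ii) assumes the other substantive output (irreducibility of $T$). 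You flag both honestly, but it means the sketch does not stand alone: it reduces the lemma to the two statements that essentially constitute it.

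Two smaller points. First, the Perron--Frobenius detour can be partly short-circuited: once $v^u$ is known to be strictly positive, the elementary fact that a non-negative matrix with a strictly positive eigenvector has that eigenvalue equal to its spectral radius already gives that $\lambda$ is the largest eigenvalue of $T$, hence of $T'$, since the permutation block contributes only unit-modulus eigenvalues; irreducibility is genuinely needed only for uniqueness of $v^u$, and your justification of it (``iterates spread each main branch across every other'') is asserted rather than proved --- it is part of what \cite{bh95} delivers. Second, positivity of the infinitesimal entries follows more cleanly from the block form than from your appeal to the third bullet of Theorem \ref{bh1}, which is circular in spirit because it describes the singularity structure of the very foliation you are identifying with $\phi_\tau(v^u)$: from $Av_{\mathrm{main}}+Pv_{\mathrm{inf}}=\lambda v_{\mathrm{inf}}$ one gets $v_{\mathrm{inf}}=\sum_{k\ge 0}\lambda^{-(k+1)}P^{k}Av_{\mathrm{main}}$, which is strictly positive as soon as every infinitesimal branch is eventually crossed by the image of some main branch. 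Finally, in step (iii) minimality is a slightly blunt instrument; the standard argument is that a saddle connection would have finite transverse $\mu^s$-length, while some power of $\beta$ fixing it would scale that length by a non-trivial power of $\lambda$, a contradiction --- this also disposes of a singular leaf returning to the same singularity.
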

See \cite{b10} for a proof Lemma \ref{inducedaction}. See also Figure \ref{basis}.  
\begin{lem}\label{inducedaction}
Let $\tau$ be an invariant train track for $\beta\in B_n$. If all components of $D_n-\tau$ are odd-gons (in particular, if $\tau$ is complete), then there is a basis for $\mathcal{W}(\tau)$ consisting of transverse measures $\mu$ such that $\mu(e)\neq 0$ for exactly one main branch $e$. 
\end{lem}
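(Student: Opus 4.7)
The plan is to construct a basis explicitly, with the construction forced by the odd-gon hypothesis. For each main branch $e_0$ I would try to define a transverse measure $\mu_{e_0}\in\mathcal{W}(\tau)$ with $\mu_{e_0}(e_0)=1$, $\mu_{e_0}(e)=0$ for every other main branch $e$, and values on the infinitesimal branches determined by the switch conditions.

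The key local ingredient is the following observation about odd polygons. If $P$ is a $p$-gon component of $D_n\setminus\tau$ with cyclically ordered cusps $v_1,\ldots,v_p$ and sides $s_1,\ldots,s_p$ (with $s_i$ and $s_{i+1}$ meeting at $v_i$), and if $t_i$ denotes the third branch at $v_i$ (the one not on $\partial P$), then the switch conditions at the $p$ cusps of $P$ assemble into the cyclic system
\[
\mu(s_i)+\mu(s_{i+1})=\mu(t_i),\qquad i=1,\ldots,p\pmod{p}.
\]
For $p$ odd the coefficient matrix is a $p\times p$ circulant of determinant $\pm 2$, hence invertible; prescribing the $\mu(t_i)$ thus uniquely determines the $\mu(s_i)$, producing the alternating $\pm 1/2$ pattern visible in Figure~\ref{basis}. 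Polygon by polygon, this allows the main-branch data of $\mu_{e_0}$ to be propagated throughout the infinitesimal branches.

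The main technical step is verifying global consistency: if an infinitesimal branch lies on the boundaries of two polygons, the values assigned by the respective local systems must agree. I would handle this via an injectivity argument for the restriction map $\pi:\mathcal{W}(\tau)\to\mathbb{R}^{M}$ to main-branch coordinates. Suppose $\mu\in\mathcal{W}(\tau)$ satisfies $\mu|_{M}=0$; then at any interior polygon whose external branches are all main (hence carry value $0$), the cyclic invertibility forces $\mu\equiv 0$ on its sides. Iterating outward through adjacent polygons — at each step using main-branch zeros or already-computed zero values as inputs — forces $\mu=0$ on all infinitesimal branches, showing $\pi$ is injective. Combined with a dimension count, each $\mu_{e_0}$ exists uniquely, the family $\{\mu_{e_0}\}$ is linearly independent, and spanning follows because any $\mu\in\mathcal{W}(\tau)$ agrees with $\sum_{e_0\in M}\mu(e_0)\mu_{e_0}$ on main branches and hence, by injectivity, everywhere. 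The chief difficulty I anticipate is precisely this iterative global propagation — it requires a careful induction through the dual adjacency graph of the polygons, so that the odd-gon circulant inversion has enough already-known inputs at each step, and it is where the odd-gon hypothesis does global rather than merely local work.
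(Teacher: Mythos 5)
Your construction is, in substance, the argument the paper intends: the paper does not prove Lemma \ref{inducedaction} itself but defers to \cite{b10} and to Figure \ref{basis}, and the alternating $\pm\tfrac12$ pattern in that figure is exactly the output of your circulant inversion. Your local ingredient is correct: around an odd $p$-gon the system $\mu(s_i)+\mu(s_{i+1})=\mu(t_i)$ (with $\mu(t_i)$ read as the total measure of the branches attached at the cusp $v_i$) has coefficient matrix of determinant $\pm 2$, hence is uniquely solvable, while for even $p$ it is singular --- which is precisely the paper's remark about why the lemma fails in the presence of even-gons.

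The only genuine gap, as you yourself flag, is the ``global propagation'' step, and it should be closed not by the induction you allude to (you give no ordering of the polygons in which every step has all its inputs available) but by the structure of the regular train tracks of Theorem \ref{bh1} to which the lemma is applied: in the Bestvina--Handel construction \cite{bh95} the infinitesimal branches form pairwise disjoint polygons, one around each interior singularity, no infinitesimal branch lies on two such polygons, only main branches are attached at their cusps, and every switch of $\tau$ is a cusp of exactly one such polygon. Granting this, each polygon's cyclic system has only main-branch data on its right-hand side, so a single application of your inversion per polygon both constructs $\mu_{e_0}$ (value $1$ on $e_0$, $0$ on the other main branches) and shows that the restriction map $\mu\mapsto\mu|_{\mathrm{main}}$ is injective; no consistency check between polygons is needed. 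You must also make the dimension count explicit, since it is what turns injectivity into bijectivity and hence gives both independence and spanning: each $p$-gon contributes $p$ infinitesimal branches and $p$ switches, so the number of main branches equals $k-s=\rank(\tau)=\dim\mathcal{W}(\tau)$, where $k$ and $s$ are the numbers of branches and switches. With these two structural observations inserted, your proof is complete and agrees with the construction indicated by Figure \ref{basis}; without them, the iterative argument is not yet a proof.
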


\begin{remark}
Lemma \ref{commutingdiagram} gives the following commutative diagram

$$\begin{array}[c]{ccc}
\mathcal{W}^+(\tau)&\stackrel{T'}{\longrightarrow}&\mathcal{W}^+(\tau)\\
\downarrow\scriptstyle{\phi_{\tau}}&&\downarrow\scriptstyle{\phi_{\tau}}\\
\mathcal{MF}(\tau)&\stackrel{\beta}{\longrightarrow}&\mathcal{MF}(\tau).
\end{array}$$
Hence the action on $\mathcal{W}(\tau)\subseteq \mathbb{R}^k$ is given by the $k\times k$ transition matrix 
$$
T'= \left(\begin{array}{cc}
T & 0 \\
A& P \\
\end{array} \right)$$
where $T$ is the $m\times m$ matrix which gives the action on the main branches of $\tau$
by Lemma \ref{bh1}. 
When all components of $D_n-\tau$ are odd-gons, a basis for $\mathcal{W}(\tau)\cong\mathbb{R}^m$ can be constructed as described in Lemma \ref{inducedaction} and hence the action on $\mathcal{W}(\tau)\cong\mathbb{R}^m$ 
is given by the  $m\times m$ transition matrix $T$.

We note that such a basis can not be taken if $D_n-\tau$ has an even-gon since the switch conditions are satisfied only when the alternating sum of the incoming measures on the switches of each even-gon is zero. However, there is still a basis consisting of weights on edges which may include infinitesimal ones, see \cite{b10}. 
\end{remark}

\subsection{Train track coordinates and Dynnikov coordinates}\label{section1part2}

In this section we show that, for any train track $\tau$ on $D_n$, the change of coordinate function $L:\mathcal{W}^+(\tau)\to \mathcal{S}_n$ between train track coordinates and Dynnikov coordinates is piecewise linear.
\begin{mydefs}\label{piecewiseclear}
Suppose $\tau$ is a train track with oriented branches $e_1,\dots,e_k$.  A \emph{train path} $p=e^{\epsilon_1}_{i_1}e^{\epsilon_2}_{i_2}\dots e^{\epsilon_m}_{i_m}$, ${\epsilon_j}=\pm 1$ is a smooth oriented edge-path in $\tau$. Given a train path $p$ on $\tau$ we define $\hat{p}:\mathcal{W}^+(\tau)\to \mathbb{R}_{\geq 0}$ as follows:  for each $\mu\in \mathcal{W}^+(\tau)$, $\hat{p}(\mu)$ is the total measure of leaves of $\phi_{\tau}(\mu)$ following  
the train path $p$.
\end{mydefs}

\begin{lem}\label{piecewisemeasure}
For each train path $p$ in $\tau$, the map $\hat{p}:\mathcal{W}^+(\tau)\subseteq \mathbb{R}_+^k \to \mathbb{R}_{\geq 0}$ is piecewise linear.
\end{lem}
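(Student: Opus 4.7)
The plan is to proceed by induction on the length $m$ of the train path $p = e^{\epsilon_1}_{i_1}\cdots e^{\epsilon_m}_{i_m}$. The base case $m=1$ is immediate, since $\hat{p}(\mu)=\mu(e_{i_1})$ is a linear coordinate projection, hence piecewise linear.

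For the inductive step, I would first analyse what happens locally at each switch of $\tau$. Fix a switch $v$ with incoming branches $e_1,\ldots,e_r$ (listed in the order in which they meet $v$ on one side of the tangent line) and outgoing branches $f_1,\ldots,f_s$ on the other side, with measures $a_i = \mu(e_i)$ and $b_j = \mu(f_j)$; the switch condition gives $\sum_i a_i = \sum_j b_j$. In the construction of the pre-foliation $\mathcal{F}^*$, the rectangles over incoming branches are stacked and glued to the stacked rectangles over the outgoing branches in the unique measure-preserving, order-preserving way. Writing $A_i = a_1+\cdots+a_i$ and $B_j = b_1+\cdots+b_j$, the measure of the horizontal sub-band of the rectangle over $e_i$ whose leaves continue onto $f_j$ is
$$c_{ij}(\mu)=\max\bigl(0,\,\min(A_i,B_j)-\max(A_{i-1},B_{j-1})\bigr),$$
and the two heights of this sub-band inside the $e_i$-rectangle, namely $\max(A_{i-1},B_{j-1})-A_{i-1}$ and $\min(A_i,B_j)-A_{i-1}$, are likewise piecewise linear in $\mu$.

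For the induction proper, I would strengthen the hypothesis to carry more data than just $\hat{p}'(\mu)$: namely, I would simultaneously track the two endpoints, inside the rectangle over the final branch of $p'$, of the horizontal sub-band consisting of the leaves that have followed $p'$, and show by induction that both endpoints are piecewise linear functions of $\mu$. The inductive step at the switch $v$ where $p=p'\,e^{\epsilon_m}_{i_m}$ continues from $p'$ then amounts to intersecting this sub-band with the sub-band whose leaves pass from the terminal branch of $p'$ onto $e^{\epsilon_m}_{i_m}$. The resulting interval has endpoints that are min/max combinations of the piecewise linear quantities already in hand, and its length is precisely $\hat{p}(\mu)$. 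Since piecewise linear functions on $\mathbb{R}^k$ are closed under $+$, $-$, $\min$, $\max$, and composition, the induction closes.

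The main obstacle is the bookkeeping, not any new idea: simply knowing the total measure $\hat{p}'(\mu)$ of leaves that have followed $p'$ is not enough to determine how this measure splits at the next switch, since the splitting depends on \emph{where} that sub-band sits inside the terminal rectangle of $p'$. Once the vertical position of the relevant sub-band is included in the inductive hypothesis, every step of the argument reduces to applying the local formula for $c_{ij}$ above and invoking the closure properties of piecewise linear functions.
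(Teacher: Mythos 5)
Your proof is correct and rests on the same geometric picture as the paper's: the paper concatenates copies of the rectangles $R_{i_j}$ along the path into a space $K$ and observes that $\hat{p}(\mu)$ is the width of the largest rectangle threading through $K$, asserting its piecewise linearity in the rectangle widths as clear. Your induction, which tracks the endpoints of the band of leaves in the terminal rectangle and closes using the $\min/\max$ formula $c_{ij}(\mu)=\max\bigl(0,\min(A_i,B_j)-\max(A_{i-1},B_{j-1})\bigr)$ at each switch, is simply an explicit formalization of that same observation, supplying the bookkeeping the paper leaves to the reader.
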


\begin{proof}
Let $R_1,\dots R_k$ be the rectangles used in the construction of $\phi_{\tau}(\mu)$ as described above. Associate a copy of $R_{i_j}$ to each branch $e^{\epsilon_j}_{i_j}$ in $p$, and glue $R_{i_j}$ to $R_{i_{j-1}}$ and $R_{i_{j+1}}$ using the described identifications. Denote the identification space $K$. Then $\hat{p}(\mu)$ is the width of the largest rectangle which fits in $K$ with edges parallel to the edges of each $R_{i_j}$. This is clearly a piecewise linear function of the widths of the rectangles (observe that $\hat{p}(\mu)$ is the measure of leaves that pass along the shaded rectangle in Figure \ref{generalschema}).
\end{proof}

\begin{remark}\label{notconnected}
Note that Lemma \ref{piecewisemeasure} implies that  $\hat{p}:\mathcal{W}^+(\tau)\subseteq \mathbb{R}_+^k \to \mathbb{R}_{\geq 0}$ is linear in a neighbourhood of any measure for which the prongs of pre-foliation $\mathcal{F}^*$ are not connected.
\end{remark}

\begin{figure}
\centering
\psfrag{a}[tl]{$\scriptstyle{a}$}
\psfrag{x}[tl]{$\scriptstyle{x}$}
\psfrag{p}[tl]{$\scriptstyle{p}$}
\psfrag{e}[tl]{$\scriptstyle{e}$}
\psfrag{b}[tl]{$\scriptstyle{b}$}
\psfrag{c}[tl]{$\scriptstyle{c}$}
\psfrag{d}[tl]{$\scriptstyle{d}$}
\psfrag{h}[tl]{$\scriptstyle{h}$}
\psfrag{f}[tl]{$\scriptstyle{f}$}
\psfrag{g}[tl]{$\scriptstyle{g}$}
\includegraphics[width=0.6\textwidth]{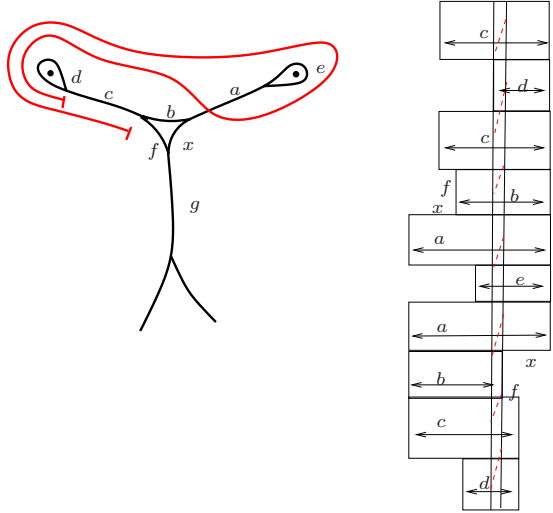}
\caption{The space $K$ for a train path $p$}\label{generalschema}
\end{figure}

\begin{mydefs}\label{tightminimal}
Let $\tau$ be a train track on $D_n$ and $\mathcal{A}_n\subset D_n$ be the set of Dynnikov arcs $\alpha_i$ $(1\leq i\leq 2n-4)$ and $\beta_i$  $(1\leq i\leq n-1)$. A \emph{standard embedding of $\tau$} in $D_n$ with respect to $\mathcal{A}_n$ satisfies the following:  
\begin{itemize}
\item each branch $e_i$ of $\tau$ is tight (that is $e_i$ doesn't bound any unpunctured disk with any Dynnikov arc)
\item the arcs $\alpha_i$ $(1\leq i\leq 2n-4)$ and $\beta_i$  $(1\leq i\leq n-1)$ do not pass through the switches of $\tau$.
\end{itemize}

We shall always take a standard  embedding of a given train track $\tau$ as described in Definitions \ref{tightminimal} throughout the text.

	 We say that a train path $p$ in $\tau$ is \emph{non-tight} with respect to a Dynnikov arc~$\gamma$ if some subarc of $p$ together with some subarc of $\gamma$ bounds a disk containing no punctures. For each Dynnikov arc $\gamma$ write $\Pi_{\gamma}$ for the set of all train paths which are non-tight with respect to $\gamma$. There is a partial order $\leq$ on $\Pi_{\gamma}$ defined as follows: $p_1\leq p_2$ if $p_1$ is a subpath of $p_2$. We define $\Pi'_{\gamma}\subseteq \Pi_{\gamma}$ as the subset of minimal train paths with respect to the relation $\leq$. Then any minimal non-tight train path $p \in \Pi'_{\gamma}$ is the concatenation  $p=\delta_1\delta_2\delta_3$ of three paths (not train paths) where $\delta_2$ is the subarc bounding a disk with some subarc of $\gamma$ and $\delta_1$ and $\delta_3$ are contained in single branches of $\tau$ (Figure \ref{snttornek}). 
\end{mydefs}

\begin{figure}
\centering
\psfrag{p1}[tl]{$\scriptstyle{p}$}
\psfrag{p2}[tl]{$\scriptstyle{p_2}$}
\psfrag{gamma}[tl]{$\scriptstyle{\gamma}$}
\psfrag{2}[tl]{$\scriptstyle{\tilde{p}_2}$}
\psfrag{3}[tl]{$\scriptstyle{e^{\epsilon_4}_{i_4}}$}
\psfrag{4}[tl]{$\scriptstyle{\delta_1}$}
\psfrag{6}[tl]{$\scriptstyle{\delta_3}$}
\psfrag{5}[tl]{$\scriptstyle{\delta_2}$}
\includegraphics[width=0.5\textwidth]{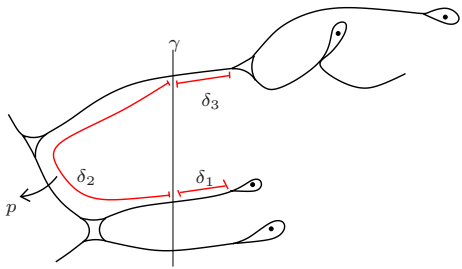}
\caption{A minimal non-tight train path}\label{snttornek}
\end{figure}

\begin{lem}\label{piecewiselin}
Let $\tau$ be a train track. Then, the change of coordinate function $L:\mathcal{W}^+(\tau)\to \mathcal{S}_n$ is piecewise linear.
\end{lem}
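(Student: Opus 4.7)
The plan is to show that each component of $L(\mu)$ is piecewise linear as a function of $\mu \in \mathcal{W}^+(\tau)$. Since the Dynnikov formulas express $a_i$ and $b_i$ as integer linear combinations of arc measures $\mu([\gamma])$ for $\gamma \in \mathcal{A}_n$ (where $\mu$ on the right is shorthand for the transverse measure of $\phi_\tau(\mu)$), it suffices to prove that $\mu \mapsto \mu([\gamma])$ is piecewise linear on $\mathcal{W}^+(\tau)$ for every such Dynnikov arc $\gamma$.

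Fix $\gamma$. Using the standard embedding of $\tau$, the arc $\gamma$ meets $\tau$ transversely at finitely many points, each interior to some branch. In the rectangle construction of $\mathcal{F}^*$, the total transverse mass of $\gamma \cap \mathcal{F}^*$ is the manifestly linear quantity
\begin{equation*}
N(\gamma,\mu) \;=\; \sum_i n_i(\gamma)\, \mu(e_i),
\end{equation*}
where $n_i(\gamma)$ counts the crossings of $\gamma$ with the branch $e_i$. Since $\phi_\tau(\mu)$ is obtained from $\mathcal{F}^*$ by collapsing, $N(\gamma,\mu)$ is only an upper bound for $\mu([\gamma])$: it overcounts precisely when some leaves of $\mathcal{F}^*$ cross $\gamma$ along bigons that can be isotoped away.

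The overcounts are exactly accounted for by the non-tight train paths of Definitions \ref{tightminimal}: a leaf co-bounds a removable bigon with $\gamma$ if and only if the segment of that leaf between two consecutive $\gamma$-crossings follows a path $p \in \Pi_\gamma$, and the \emph{innermost} such bigons correspond precisely to the minimal non-tight paths $p \in \Pi'_\gamma$. Peeling off innermost bigons gives an expression of the form
\begin{equation*}
\mu([\gamma]) \;=\; N(\gamma,\mu) \;-\; 2\sum_{p \in \Pi'_\gamma} c_p\, \hat{p}(\mu),
\end{equation*}
with coefficients $c_p \in \mathbb{Z}_{\geq 0}$ determined by the local combinatorics of the decomposition $p = \delta_1\delta_2\delta_3$. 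By Lemma \ref{piecewisemeasure}, each $\hat{p}$ is piecewise linear in $\mu$, hence $\mu([\gamma])$ is piecewise linear, and therefore so is $L$.

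The step I expect to be the main obstacle is making the bigon-removal bookkeeping rigorous when non-tight configurations are nested: one must verify that stripping off innermost bigons and only summing over \emph{minimal} non-tight paths actually delivers the minimal geometric intersection number, rather than stopping at a non-tight intermediate representative. I would handle this by induction on the number of crossings of $\gamma$ with the support of $\mathcal{F}^*$, using the standard fact that whenever any bigon between $\gamma$ and a leaf exists there is an innermost one, and that removing it strictly decreases the intersection count while preserving isotopy class. The nested case reduces to the inductive hypothesis applied to a strictly simpler arc-foliation pair, and the piecewise linearity of the correction terms is maintained at every stage by Remark \ref{notconnected} together with Lemma \ref{piecewisemeasure}.
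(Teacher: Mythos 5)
Your argument is essentially the paper's proof: reduce to the Dynnikov arcs, write $\mu([\gamma])=\sum_{i}n_i\,\mu(e_i)-2\sum_{p\in\Pi'_\gamma}\hat{p}(\mu)$, and invoke Lemma \ref{piecewisemeasure}; the ``main obstacle'' you flag is handled in the paper by the one-line observation that two minimal non-tight paths, neither a subpath of the other, carry disjoint packets of leaves (except possibly boundary leaves), so in fact every coefficient $c_p$ equals $1$. The only point you should add is the finiteness of $\Pi'_\gamma$ --- a minimal non-tight train path cannot traverse the same oriented branch twice, since it would then contain a non-trivial loop and fail to be minimal --- which your correction sum implicitly requires before piecewise linearity can be concluded.
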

\begin{proof}
Given a Dynnikov arc $\gamma$ and the branches $\{e_1,\dots,e_k\}$ of $\tau$ (standardly embedded) let $n_i$ be the number of intersections of $e_i$ with $\gamma$.  To compute $\mu(\gamma)$ we need to subtract the measure on all independent train paths which form a loop with ${\gamma}$. The only condition is that a train path should not be a subpath of another (two train paths neither of which is a subpath of the other define disjoint packets of leaves except perhaps their boundary leaves).

 Thus the measure $\mu(\gamma)$ of $\gamma$ is given by  \begin{align*}\label{equationpiecewise}
 \mu(\gamma)=\sum^k_{i=1}n_i\mu(e_i)-2\sum_{p\in\Pi'_\gamma}\hat{p}(\mu).\end{align*}

We know that any $p\in \Pi'_{\gamma}$ is of the form $e^{\epsilon_1}_{i_1}\tilde{p}e^{\epsilon_2}_{i_2}$, where $e^{\epsilon_1}_{i_1}$ and $e^{\epsilon_2}_{i_2}$ cross $\gamma$ ($e_{i_1}$ contains $\delta_1$ and $e_{i_2}$ contains $\delta_3$). Note that $\tilde{p}$ cannot contain the same branch with the same orientation twice since then it would contain a non-trivial loop which is impossible (a non-tight train path which contains a non-trivial loop is not minimal). Hence $\mu(\gamma)$ is piecewise linear since $\Pi'_\gamma$ is finite and for each of these train paths $\hat{p}(\mu)$ is piecewise linear by Lemma~\ref{piecewisemeasure}. Therefore the map $\mathcal{W}^+(\tau)\to \mathcal{S}_n$ is piecewise linear.
\end{proof}

Next, we shall illustrate Lemma \ref{piecewiselin} in the following example:
\begin{example}\label{trainmbedding}

Consider the $4$-braid $\beta=\sigma_1\sigma_{2}\sigma^{-1}_3$ on $D_4$. A standard embedding of the invariant train track $\tau$ of $\beta$ with respect to Dynnikov arcs is as depicted in Figure 8. Let $a,b,c,d$ and $m_1,m_2,m_3,m_4,m_5,m_6,m_7$ denote the measures on the main and infinitesimal branches of $\tau$. We first observe that the measures on the infinitesimal branches of $\tau$ are determined by  $a,b,c,d$ since the switch conditions give 
\begin{align*}
m_1&=a/2,~~ m_2=b/2,~~m_3=(c+d)/2,~~m_4=d/2\\*
m_5&=(a+b-c)/2,~~ m_6=(b+c-a)/2,~~ m_7=(a+c-b)/2.
\end{align*}
Since $D_n-\tau$ only has an unpunctured trigon and punctured monogons, $\tau$ is complete and $\rank(\tau)=4$.  We shall find the change of coordinate function $(a,b,c,d)\mapsto (a_1,a_2,b_1,b_2)$ from train track coordinates to Dynnikov coordinates.

\begin{figure}
\centering
\psfrag{a}[tl]{$\scriptstyle{a}$}
\psfrag{b}[tl]{$\scriptstyle{b}$}
\psfrag{c}[tl]{$\scriptstyle{c}$}
\psfrag{d}[tl]{$\scriptstyle{d}$}
\psfrag{b2}[tl]{$\scriptstyle{\beta_2}$}
\psfrag{b1}[tl]{$\scriptstyle{\beta_1}$}
\psfrag{b3}[tl]{$\scriptstyle{\beta_3}$}
\psfrag{m1}[tl]{$\scriptstyle{m_1}$}
\psfrag{m2}[tl]{$\scriptstyle{m_2}$}
\psfrag{m3}[tl]{$\scriptstyle{m_3}$}
\psfrag{m4}[tl]{$\scriptstyle{m_4}$}
\psfrag{x}[tl]{$\scriptstyle{\alpha_1}$}
\psfrag{y}[tl]{$\scriptstyle{\alpha_2}$}
\psfrag{z}[tl]{$\scriptstyle{\alpha_3}$}
\psfrag{t}[tl]{$\scriptstyle{\alpha_4}$}
\psfrag{l}[tl]{$\scriptstyle{m_7}$}
\psfrag{n}[tl]{$\scriptstyle{m_5}$}
\psfrag{m}[tl]{$\scriptstyle{m_6}$}
\psfrag{p1}[tl]{$\scriptstyle{p_1}$}
\psfrag{p2}[tl]{$\scriptstyle{p_2}$}
\psfrag{d}[tl]{$\scriptstyle{d}$}
\includegraphics[width=0.75\textwidth]{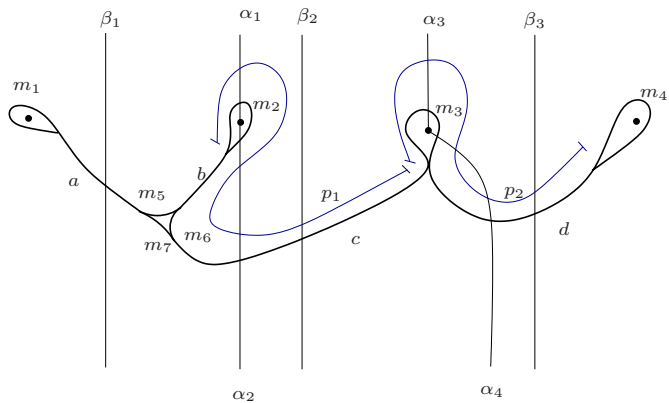}
\caption{A standard embedding of $\tau$ with respect to Dynnikov arcs}\label{traintrack2s}
\end{figure}

\begin{figure}
\centering
\psfrag{a}[tl]{$\scriptstyle{a}$}
\psfrag{a1}[tl]{$\scriptstyle{\alpha_1}$}
\psfrag{a2}[tl]{$\scriptstyle{\alpha_2}$}
\psfrag{b}[tl]{$\scriptstyle{b}$}
\psfrag{c}[tl]{$\scriptstyle{c}$}
\psfrag{m2}[tl]{$\scriptstyle{m_2}$}
\psfrag{m7}[tl]{$\scriptstyle{m_7}$}
\psfrag{m5}[tl]{$\scriptstyle{m_5}$}
\psfrag{m6}[tl]{$\scriptstyle{m_6}$}
\psfrag{m}[tl]{$\scriptstyle{m_6}$}
\psfrag{p1}[tl]{$\scriptstyle{p_1}$}
\psfrag{p2}[tl]{$\scriptstyle{p_2}$}
\psfrag{d}[tl]{$\scriptstyle{d}$}
\includegraphics[width=0.65\textwidth]{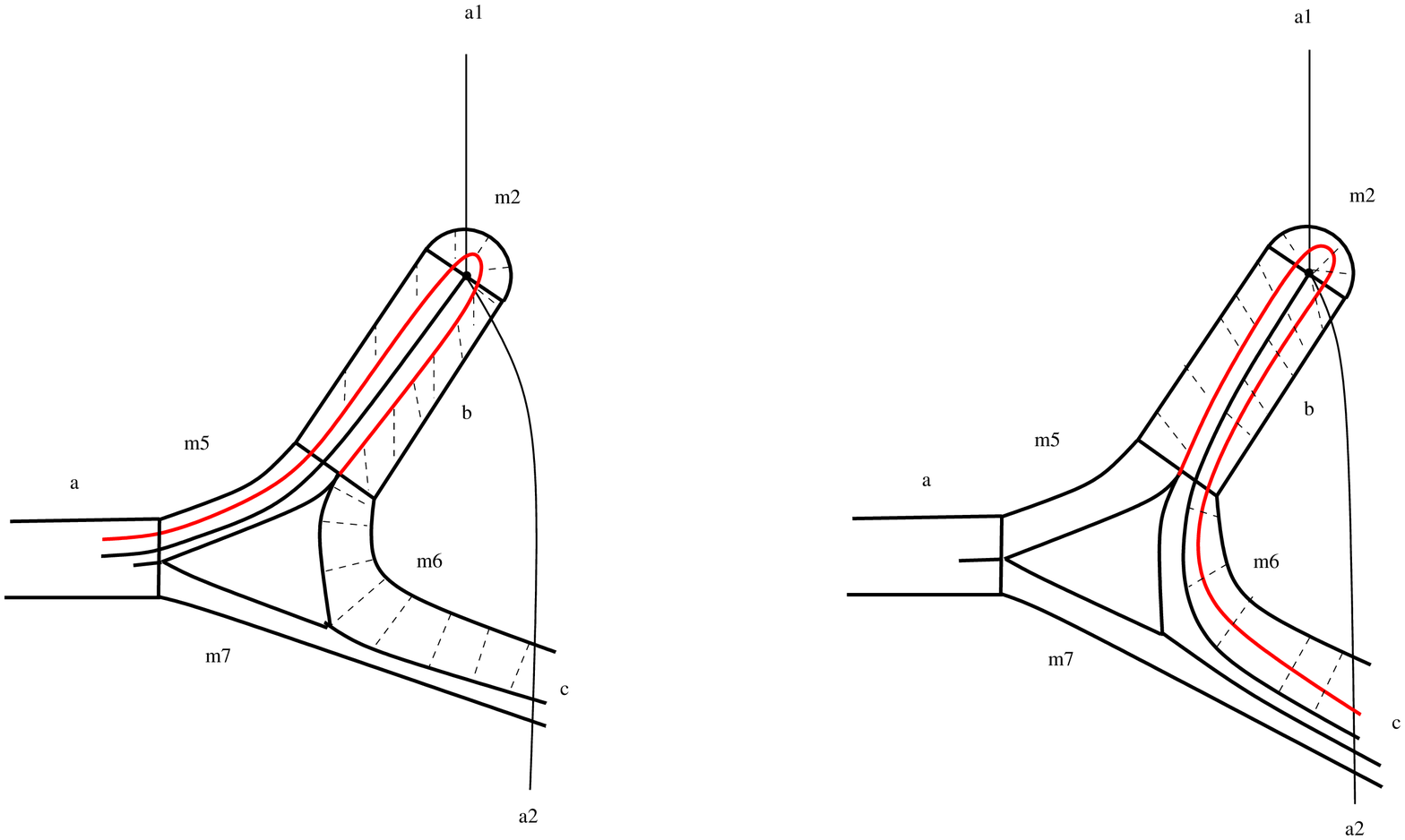}
\caption{ $\hat{p_1}(\mu)=m_6$ on the left and $\hat{p_1}(\mu)=b/2$ on the right}\label{traintrack2t}
\end{figure}

\begin{figure}
\centering
\psfrag{a}[tl]{$\scriptstyle{a}$}
\psfrag{m2}[tl]{$\scriptstyle{m_2}$}
\psfrag{d}[tl]{$\scriptstyle{d}$}
\psfrag{b}[tl]{$\scriptstyle{b}$}
\psfrag{c}[tl]{$\scriptstyle{c}$}
\psfrag{m3}[tl]{$\scriptstyle{m_3}$}
\psfrag{d+c}[tl]{$\scriptstyle{c+d}$}
\psfrag{m5}[tl]{$\scriptstyle{m_5}$}
\psfrag{m6}[tl]{$\scriptstyle{m_6}$}
\psfrag{m}[tl]{$\scriptstyle{m_6}$}
\psfrag{p1}[tl]{$\scriptstyle{p_1}$}
\psfrag{p2}[tl]{$\scriptstyle{p_2}$}
\psfrag{d}[tl]{$\scriptstyle{d}$}
\includegraphics[width=0.55\textwidth]{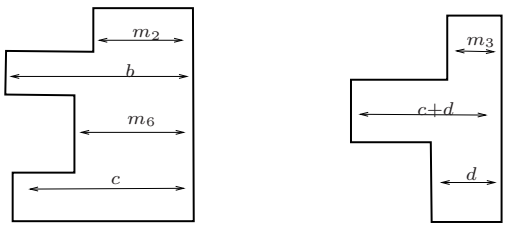}
\caption{$K_{p_1}$ and $K_{p_2}$}\label{schemaex2}
\end{figure}

We have $\beta_1=a$, $\beta_2=c$ and $\beta_3=d$ since $\Pi'_{\beta_i}=\emptyset$ for $i=1,2,3$. 
Hence, $$b_1=\frac{a-c}{2} ~~\text{and}~~ b_2=\frac{c-d}{2}.$$
We also have $\Pi'_{\alpha_1}=\Pi'_{\alpha_3}=\emptyset$, 
 and $\Pi'_{\alpha_2}=\{p_1\},~~\Pi'_{\alpha_4}=\{p_2\}$ where $p_1$ and $p_2$ are as depicted in Figure \ref{traintrack2s}.
 
 We observe from Figure \ref{traintrack2t} and Figure \ref{schemaex2} that

\begin{align*}
\hat{p_1}(\mu)=\min(m_2,m_6)~~\text{and}~~\hat{p_2}(\mu)=\min(d,m_3)
\end{align*}

We have that
$$\alpha_1=m_2=b/2 ~~\text{and}~~\alpha_2=m_2+c-2\hat{p_1}(\mu).$$
Hence  
\begin{align*}
\alpha_2= \frac{b}{2}+c-\min(b,b+c-a)&=\max(a,c)-\frac{b}{2},
\end{align*}
and so,
\begin{align*}
a_1=\frac{\alpha_2-\alpha_1}{2}=\frac{\max(a,c)-b}{2}
\end{align*}

Similar computations give $a_2=\frac{\alpha_4-\alpha_3}{2}=\frac{\max(-c,-d)}{2}$, and hence we get

\begin{align*}
(a_1,a_2,b_1,b_2)=\left(\frac{\max(a,c)-b}{2},\frac{\max(-c,-d)}{2},\frac{a-c}{2},\frac{c-d}{2}\right).
\end{align*}
\end{example}

\subsection{The spectrum of a Dynnikov matrix and a train track transition matrix}\label{mainresults}
The aim of this section is to compare the spectra of Dynnikov matrices with the spectra of the train track transition matrices of $\beta\in B_n$.

\subsection*{Case 1: $(\mathcal{F}^u,\mu^u)$ has only unpunctured $3$-pronged and punctured $1$-pronged singularities}\label{section2part2}
Let $\beta\in B_n$ be a pseudo\,-Anosov braid with unstable invariant foliation $(\mathcal{F}^u,\mu^u)$ and an invariant train track $\tau$ with transition matrix $T$. Let $(\mathcal{F}^u,\mu^u)$ have only unpunctured $3$-pronged and punctured $1$-pronged singularities.

\begin{prooftheorem31}
\textnormal{Since $(\mathcal{F}^u,\mu^u)$ has only unpunctured $3$-pronged and punctured $1$-pronged singularities, $\tau$ is complete. That is, $\mathcal{W}^+(\tau)$ has dimension $2n-4$. Therefore, $\mathcal{MF}(\tau)$ is a chart on $\mathcal{MF}_n$ \cite{mosher1, penner}.
By Lemma \ref{perronfoliation}, the eigenvector $v$ associated with the dilatation $\lambda>1$ is a transverse measure on $\tau$ with $(\mathcal{F}^u,\mu^u)=~\phi_{\tau}(v)$. Furthermore, the entries of $v$ are strictly positive. Therefore, $\mathcal{W}^+(\tau)$ and $\mathcal{MF}(\tau)$ are neighbourhoods of $v$ and $(\mathcal{F}^u,\mu^u)$ respectively.
Construct the pre-foliation $\mathcal{F}^*$ from $\tau$ as described in Section \ref{subsection21}.
Because none of the prongs of the pre-foliation $\mathcal{F}^*$ are connected by Lemma \ref{perronfoliation}, it follows from Remark~\ref{notconnected} that there is a neighbourhood $U$ of $v\in \mathcal{W}^+(\tau)$ on which the change of coordinate function $L=\rho\circ\phi_{\tau}$ from train track coordinates to Dynnikov coordinates is linear. Write $\mathcal{R}=L(U)\subseteq\mathcal{S}_n$ which is a neighbourhood of $L(v)$. We have the following commutative diagram}:

\begin{align*}
\begin{CD}
\mathcal{W}^{+}(\tau) @>T>>\mathcal{W}^{+}(\tau)\\
@V\phi_{\tau}  VV @V\phi_{\tau} VV\\
\mathcal{MF}(\tau) @>\beta>>\mathcal{MF}(\tau)\\
@V \rho VV @V \rho VV\\
\mathcal{R}\subseteq\mathcal{S}_n @>F>>\mathcal{S}_n\\
\end{CD}
\end{align*}
\textnormal{Then $F|_{\mathcal{R}}=D=L\circ T\circ L^{-1}$ is linear and isospectral to $T$.}

\end{prooftheorem31}\vspace{-7mm}\qed

\begin{prooflemma33}
\textnormal{Let~$\{e_i\}_{1\le i\le N}$ be the oriented branches of~$\tau$. Take a
copy $e_i'$ of each~$e_i$ and endow it with the opposite
orientation. The lifted train track~$\tilde{\tau}$ is obtained by
gluing together the branches $e_i$ and $e_i'$ following the pattern of
the original train track~$\tau$, but in such a way that the
orientations of all of the branches at each switch are consistent. By
construction, the edge path~$\tilde{f}(e_i)$ is obtained from the edge
path~$f(e_i)$ by replacing each occurence of $\overline{e_j}$ with
$e_j'$; and similarly, the edge path~$\tilde{f}(e_i')$ is obtained from
the edge path~$\overline{f(e_i)}$ by replacing each occurence of
$\overline{e_j}$ with $e_j'$}.
\textnormal{Let~$A_{ij}$ be the number of occurences of $e_i$ in $\tilde{f}(e_j)$
(that is, the number of occurences of $e_i$ in $f(e_j)$),
which by construction is equal to the number of occurences of $e_i'$
in $\tilde{f}(e_j')$; and let $B_{ij}$ be the number of occurences
of~$e_i'$ in $\tilde{f}(e_j)$ (that is, the number of occurences of
$\overline{e_i}$ in $f(e_j)$), which by construction is equal to the
number of occurences of $e_i$ in $\tilde{f}(e_j')$. Hence the lifted
transition matrix~$\tilde{T}$ is of the form}
\[
\tilde{T}=\left(
\begin{array}{cc}
A & B \\ B & A
\end{array}
\right),
\]
\textnormal{where $A+B=T$ (and we have restricted to the main branches $e_i$
($1\le i\le k$) and their copies $e_i'$).
Hence we have}
\begin{align*}
\chi(\tilde{T})=\left|xI_{2k}-\tilde{T}\right|
&=\left|
\begin{array}{cc}
xI_k-A & -B \\
-B & xI_k-A \end{array}\right|
=\left|
\begin{array}{cc}
xI_k-A & xI_k-T \\
-B & xI_k-T \end{array}\right|\\
&=\left|
\begin{array}{cc}
xI_k-A+B & 0_k \\
-B & xI_k-T \end{array}\right|\\
&=\left|xI_k-A+B\right|\left|xI_k-T\right|
\end{align*}
\textnormal{That is, the set of eigenvalues of $\tilde{T}$ is the union of the set of eigenvalues of $T$ and the set of eigenvalues of $A-B$. It remains to show that the eigenvalues of $A-B$ are roots of unity}.

\textnormal{Now for each~$m\ge 1$, let $A^{(m)}_{ij}$ denote the number of
occurences of $e_i$ in $f^m(e_j)$, and $B^{(m)}_{ij}$ denote the
number of occurences of $\overline{e_i}$ in $f^m(e_j)$. A
straightforward induction shows that the matrix $A^{(m)}$ is the sum
of all products of~$m$ copies of~$A$ and~$B$ having an even number
of~$B$s, and $B^{(m)}$ is the sum of all products of~$m$ copies of~$A$
and~$B$ having an odd number of~$B$s: therefore $A^{(m)}-B^{(m)} =
(A-B)^m$.} 
\textnormal{Let~$m$ be such that $f^m$ fixes all of the prongs of~$\tau$. Then for
each $e_i$, the initial and terminal points of $e_i$ and of $f^m(e_i)$
are the same. Since each real branch disconnects~$\tau$, it follows
that $A^{(m)}_{ij}=B^{(m)}_{ij}$ for all $i\not=j$, and
$A^{(m)}_{ii}=B^{(m)}_{ii}+1$ for all~$i$ (the number of times that
$f^m(e_i)$ crosses~$e_i$ in the positive direction is one more than
the number of times it crosses in the negative direction). That is
\[(A-B)^m = A^{(m)}-B^{(m)} = Id,\]
so that all of the eigenvalues of $A-B$ are roots of unity as required.}
\end{prooflemma33}\vspace{-7mm}\qed 

\begin{corollary}\label{anycomplete}
Let $[f]\in \MCG(D_n)$ be a pseudo\,-Anosov  isotopy class with unstable invariant foliation $(\mathcal{F}^u,\mu^u)$ and dilatation $\lambda>1$. Let  $\tau$  be any complete invariant train track with associated transition matrix $T$. Then $T$ and $D$ are isospectral up to  roots of unity and zeros.
\end{corollary}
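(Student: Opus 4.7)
The plan is to reduce Corollary~\ref{anycomplete} to Theorem~\ref{firstthm} using the fact that the spectrum of a train track transition matrix of a pseudo-Anosov is, up to roots of unity and zeros, an invariant of the isotopy class and does not depend on the chosen invariant train track; this invariance is delivered by Rykken's theorem (Theorem~\ref{rykken}) together with Lemma~\ref{eski} via the orientation double cover.

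First I would argue that since $\tau$ is complete, each component of $D_n\setminus\tau$ is a trigon or a once-punctured monogon, and hence $(\mathcal{F}^u,\mu^u)$ has only unpunctured 3-pronged and punctured 1-pronged singularities. The pre-foliation construction of Section~\ref{subsection21}, applied to the Perron measure on $\tau$ (which as in the regular case of Lemma~\ref{perronfoliation} recovers $(\mathcal{F}^u,\mu^u)$ upon collapsing), places one singularity of prong count equal to the cusp count in each complementary region. Theorem~\ref{bh1} then produces a regular invariant train track $\tau^*$ of $[f]$; since the $p$-gon counts of $\tau^*$ must match the singularity data and only $p=1,3$ occur, $\tau^*$ is also complete. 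Applying Theorem~\ref{firstthm} to $\tau^*$ gives the unique Dynnikov matrix $D$, which is isospectral to the transition matrix $T^*$ of $\tau^*$.

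It then remains to show that $T$ and $T^*$ are isospectral up to roots of unity and zeros. Passing to the orientation double cover $M$ of $D_n$, Lemma~\ref{eski} gives lifted transition matrices $\tilde{T}$ and $\tilde{T}^*$ that are isospectral to $T$ and $T^*$ respectively up to roots of unity. On $M$ the lift $\tilde{f}$ is a pseudo-Anosov with orientable unstable foliation, so Theorem~\ref{rykken} applies to both $\tilde{T}$ and $\tilde{T}^*$; each of these has the same spectrum as $\tilde{f}_*:H_1(M;\mathbb{R})\to H_1(M;\mathbb{R})$ up to roots of unity and zeros. Since the homological action depends only on $[f]$ and not on the train track, $\tilde{T}$ and $\tilde{T}^*$ are isospectral up to roots of unity and zeros. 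Chaining the comparisons $T\leftrightarrow \tilde{T}\leftrightarrow \tilde{T}^*\leftrightarrow T^*\leftrightarrow D$ yields the claimed isospectrality of $T$ and $D$ up to roots of unity and zeros.

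The main obstacle is the first step, where one must verify that the Perron pre-foliation construction still recovers $(\mathcal{F}^u,\mu^u)$ and that its singularities coincide with the complementary cusps, even when $\tau$ is complete but not assumed regular (Lemma~\ref{perronfoliation} is stated only for regular invariant train tracks). This requires using the uniqueness of the expanding invariant measured foliation of a pseudo-Anosov, combined with strict positivity of the Perron eigenvector, to rule out collapse of distinct singularities and to identify the prong structure from the cusp structure of $D_n\setminus\tau$.
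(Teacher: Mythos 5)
Your proposal is correct and follows essentially the same route as the paper: the paper's proof is exactly the chain $D\sim T_r$ (Theorem~\ref{firstthm} applied to a regular invariant train track), $T_r\sim\tilde{T}_r$ and $\tilde{T}\sim T$ (Lemma~\ref{eski}), and $\tilde{T}_r\sim\tilde{T}$ (Theorem~\ref{rykken}, since the homological action on the orientation double cover depends only on $[f]$). Your preliminary step identifying the singularity structure of $(\mathcal{F}^u,\mu^u)$ from the completeness of $\tau$ (so that Theorem~\ref{firstthm} applies) is left implicit in the paper, and your spelling it out is a legitimate, slightly more careful rendering of the same argument.
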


\begin{proof}
If $f: D_n \to D_n$ is a pseudo\,-Anosov  homeomorphism it lifts to a pseudo\,-Anosov  homeomorphism $\tilde{f}: M\to M$ where $M$ is the orientation double cover~\cite{R99}. Pick a regular invariant train track $\tau_r$ and an arbitrary invariant train track $\tau$ of $f: D_n \to D_n$ with associated transition matrices $T_r$ and $T$. Given two matrices $A$ and $B$, write $A\sim B$ if $A$ and $B$ are isospectral up to roots of unity and zeros.  Then, $D\sim T_r$ by Theorem \ref{firstthm}, $T_r\sim \tilde{T}_r$ by Lemma \ref{eski}, $\tilde{T}_r\sim \tilde{T}$ by Theorem \ref{rykken} and $\tilde{T}\sim T$ by Lemma \ref{eski}. Therefore, $D\sim T$.
\end{proof}

\begin{example}
The $4$-braid $\beta=\sigma_1 \sigma^{-1}_2\sigma^3_3\sigma_2\sigma_1\sigma^{-1}_2$ has an invariant train track as depicted in Figure \ref{complete2} with associated transition matrix 

$$
T=\left[ \begin {array}{cccc} 2&0&2&1\\ \noalign{\medskip}2&0&3&1
\\ \noalign{\medskip}1&1&2&0\\ \noalign{\medskip}1&0&4&0\end {array}
 \right], 
 $$
and the coordinates of the eigenvector of  $T$ corresponding to the Perron-Frobenius eigenvalue $\lambda=4.61158$ are given by 
$$( 0.50135, 0.59215, 0.41871, 0.47190).$$

$(\mathcal{F}^u,\mu^u)$ is in the interior of a Dynnikov region $\mathcal{R}$ and the action on this region is given by the Dynnikov matrix
$$
D=\left[ \begin {array}{cccc} 5&-2&3&1\\ \noalign{\medskip}3&0&1&-2
\\ \noalign{\medskip}1&-1&1&1\\ \noalign{\medskip}1&1&0&-2\end {array}
 \right]. $$

Both $D$ and $T$ have spectrum $$\{1+\sqrt{2}\pm \sqrt{2+2\sqrt{2}}, 1-\sqrt{2}\pm i\sqrt{2\sqrt{2}-2}\}.$$
\end{example}

\begin{figure}[h!]
\centering
\includegraphics[width=0.6\textwidth]{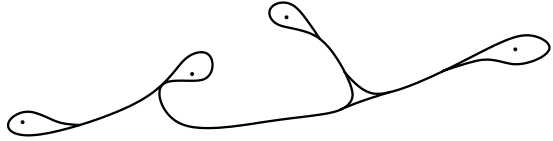}
\caption{Invariant train track for $\sigma_1 \sigma^{-1}_2\sigma^3_3\sigma_2\sigma_1\sigma^{-1}_2$}\label{complete2}
\end{figure}

\subsection*{Case 2: $(\mathcal{F}^u,\mu^u)$ has singularities other than punctured $1$-pronged and unpunctured $3$-pronged singularities}\label{section3part2}

 This section studies the case where the invariant unstable measured foliation $(\mathcal{F}^u,\mu^u)$ has other than punctured $1$-pronged and unpunctured $3$-pronged singularities. In this case $\tau$ is not complete and since $\rank(\tau)<2n-4$,  $\mathcal{MF}(\tau)$ does not define a chart on $\mathcal{MF}_n$. We shall study this case considering the two possibilities: first, where the prongs of the invariant foliations  are fixed by $\beta$; and second, where they are permuted non-trivially.

	If $\beta$ fixes the prongs, we shall see that every Dynnikov matrix is isospectral to $T$ up to some eigenvalues $1$.
	 If $\beta$ permutes the prongs non-trivially, then for some power $m$, $\beta^m$ fixes the prongs and it follows that every Dynnikov matrix for $\beta^m$ is isospectral to $T^m$ up to some eigenvalues $1$ and zeros. 
	 
	 	 However, since the induced action of $\beta^m$ on $\mathcal{PS}_n$ is a product of several Dynnikov matrices, we can not conclude in the permuted prongs case that a Dynnikov matrix  $D_i$ and $T$ are isospectral up to roots of unity. 
	 	 
The main tool to prove our results will be to extend non-complete train tracks to those which are complete. We shall use two basic moves  \emph{pinching} and \emph{diagonal extension} \cite{mosher1,penner} as described as follows.

\begin{mydef}[\textbf{Pinching unpunctured $\bf t$-gons}]\label{ref:pinching}
Let $\tau$ be a train track with an unpunctured $t$-gon $P$, where $t\geq 4$. Let $e_1,\dots,e_t$ denote the (infinitesimal) edges of $P$. 
\emph{Pinching} across $e_i$ is a move which constructs a new train track $\tau'>\tau$ by pinching together the two edges $e_{i-1}$ and $e_{i+1}$ adjacent to $e_i$\footnote{Here and in what follows, indices are taken modulo $t$}.  See Figure~\ref{yenipinching}. The train track $\tau'$ has three additional edges denoted $e'_{i-1}$, $e'_{i+1}$ and $\epsilon$: in place of the $t$-gon $P$ it has a $(t-1)$-gon and a trigon. The function $\psi_{e_i}:\mathcal{W}(\tau)\to \mathcal{W}(\tau')$ is defined as follows.

If $w=(w_1,\dots,w_t,w_{t+1},\dots,w_k)\in \mathcal{W}(\tau)$, then $\psi_{e_i}(w)$ gives weights $w_{i-1}$ to $e'_{i-1}$, $w_{i+1}$ to $e'_{i+1}$, $w_{i-1}+w_{i+1}$ to $\epsilon$ and $w_j$ to $e_j$ for $1\leq j\leq k$. We remark that if every component of $w$ is positive, then the same is true for $\psi_{e_i}(w)$.

\end{mydef}
\begin{figure}[h!]
\centering
\psfrag{w1}[tl]{$\scriptstyle{e_1}$} 
\psfrag{5}[tl]{$\scriptstyle{\psi_{e_2}}$} 
\psfrag{w3}[tl]{$\scriptstyle{e_3}$} 
\psfrag{w2}[tl]{$\scriptstyle{e_2}$} 
\psfrag{w1p}[tl]{$\scriptstyle{e'_1}$}
\psfrag{w3p}[tl]{$\scriptstyle{e'_3}$}
\psfrag{w4}[tl]{$\scriptstyle{e_4}$}
\psfrag{a1}[tl]{$\scriptstyle{a_1}$} 
\psfrag{a2}[tl]{$\scriptstyle{a_2}$}
\psfrag{a4}[tl]{$\scriptstyle{a_4}$}
\psfrag{a3}[tl]{$\scriptstyle{a_3}$}
\psfrag{a5}[tl]{$\scriptstyle{a_5}$}
\psfrag{e}[tl]{$\scriptstyle{\epsilon}$}
\psfrag{w5}[tl]{$\scriptstyle{e_5}$}
\includegraphics[width=0.5\textwidth]{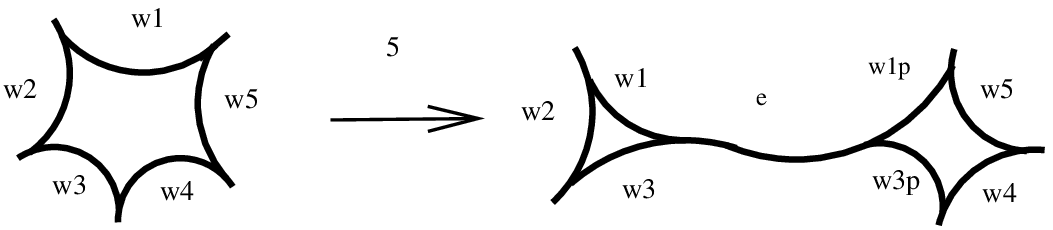}
\caption{Pinching move (across $e_2$) on an unpunctured $5$-gon}\label{yenipinching}
\end{figure}
\begin{figure}[h!]
\centering
\psfrag{e1}[tl]{$\scriptstyle{e_1}$} 
\psfrag{e2}[tl]{$\scriptstyle{e_2}$} 
\psfrag{e3}[tl]{$\scriptstyle{e_3}$} 
\psfrag{1}[tl]{$\scriptstyle{\epsilon}$}
\psfrag{2}[tl]{$\scriptstyle{e'_1}$}
\psfrag{3}[tl]{$\scriptstyle{e''_1}$} 
\psfrag{4}[tl]{$\scriptstyle{\epsilon_2}$} 
\includegraphics[width=0.6\textwidth]{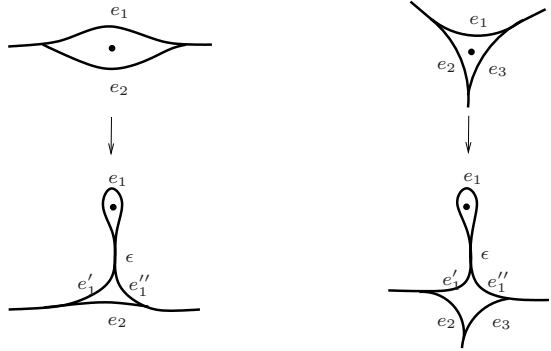}
\caption{Pinching move (of $e_1$) on a punctured bigon and a punctured trigon}\label{pinchingmoves2}
\end{figure}

\begin{mydef}[\textbf{Pinching punctured $\bf t$-gons}]\label{pinch2} 
Let $\tau$ be a train track with a punctured $t$-gon $P$ where $t\geq 2$. Let $e_1,\dots, e_t$ denote the (infinitesimal) edges of~$P$. 
\emph{Pinching} of $e_i$ is a move which constructs a new train track $\tau'>\tau$ by pinching $e_i$ to itself  around the puncture as depicted in Figure~\ref{pinchingmoves2}. The train track $\tau'$ has three additional edges denoted $\epsilon, e'_i, e_i''$: in place of the punctured $t$-gon, it has an unpunctured $(t+1)$-gon and a punctured monogon.

	 The function $\psi_{e_i}:\mathcal{W}(\tau)\to \mathcal{W}(\tau')$ is given as follows.
	
		If $w=(w_1,\dots,w_t,w_{t+1},\dots w_k)\in \mathcal{W}(\tau)$, $\psi_{e_i}(w)$ gives weights $2w_i$ to $\epsilon$, $w_i$ to $e'_i$ and $e''_i$, and $w_j$ to $e_j$ for $1\leq j\leq k$. We remark again that if every component of $w$ is positive, then the same is true for $\psi_{e_i}(w)$. 
\end{mydef}

\begin{mydef}
We say that a complete train track $\tau_p$ on $D_n$ is a \emph{pinching} of~$\tau$ if it is constructed from $\tau$ by a sequence of pinching moves.
\end{mydef}

\begin{remark}
Given a train track $\tau$, pinching each punctured $t$-gon with $t\geq 2$ yields a train track with only punctured monogons and unpunctured $t$-gons for $t\geq 3$. Pinching each unpunctured $t$-gon $t-3$ times then yields a pinching of~$\tau$. Observe that there are many different pinchings of $\tau$ (Figure~\ref{pinchings}). The main result about pinched train tracks in Lemma~\ref{chartrelation} doesn't depend on the choice of pinching.
\end{remark}

\begin{figure}[h!]
\centering
\psfrag{e1p}[tl]{$\scriptstyle{e'_1}$} 
\psfrag{e2p}[tl]{$\scriptstyle{e'_2}$} 
\psfrag{e3p}[tl]{$\scriptstyle{e'_3}$} 
\psfrag{e4p}[tl]{$\scriptstyle{e'_4}$} 
\psfrag{e1}[tl]{$\scriptstyle{e_1}$} 
\psfrag{e2}[tl]{$\scriptstyle{e_2}$} 
\psfrag{e3}[tl]{$\scriptstyle{e_3}$} 
\psfrag{e4}[tl]{$\scriptstyle{e_4}$}
\psfrag{v1}[tl]{$\scriptstyle{v_1}$} 
\psfrag{v2}[tl]{$\scriptstyle{v_2}$} 
\psfrag{v3}[tl]{$\scriptstyle{v_3}$} 
\psfrag{v4}[tl]{$\scriptstyle{v_4}$}
\includegraphics[width=0.4\textwidth]{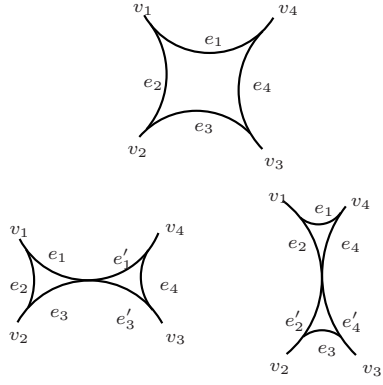}
\caption{Two different pinchings of an unpunctured $4$-gon}\label{pinchings}
\end{figure}

 Therefore,  pinching constructs a complete train track $\tau_p$ from a non-complete one $\tau$ in such a way that $\tau<\tau_p$, with the important feature that a strictly positive measure on $\tau$ induces a strictly positive measure on $\tau_p$. Hence, $\mathcal{MF}(\tau_p)$ defines a chart on $\mathcal{MF}_n$ which contains $(\mathcal{F}^u,\mu^u)$ in its interior. However, it should be noted that if $\tau$ is an invariant train track for $\beta$, $\tau_p$ will not be invariant unless relevant prongs of $(\mathcal{F}^u,\mu^u)$ are fixed by $\beta$. Therefore, we need a set of charts that fit nicely in $\mathcal{MF}(\tau_p)$ with the property that the action in each of them is described explicitly.
 
We shall use the \emph{diagonal extension} move to describe such charts. Diagonal extension gives a collection of  \emph{diagonally extended} train tracks $\tau_i$ in such a way that $\tau<\tau_i$. The disadvantage of diagonal extension is that a strictly positive measure on $\tau$ induces zero measure on the additional branches of $\tau_i$ and hence $(\mathcal{F}^u,\mu^u)$ is on the boundary of each $\mathcal{MF}(\tau_i)$. However, Lemma \ref{chartrelation} gives that the charts $\mathcal{MF}(\tau_i)$ fit together nicely and have union $\mathcal{MF}(\tau_p)$: moreover for each $i$ there is some $j$ such that $\beta(\mathcal{MF}(\tau_i))=\mathcal{MF}(\tau_j)$, and this action can be simply described with respect to appropriate bases.

\begin{mydef}[\textbf{Diagonal extension on unpunctured $\bf t$-gons}]\label{ref:extension}

Let $\tau$ be a train track with an unpunctured $t$-gon $P$ where $t\geq 4$. Let $v_1,\dots, v_t$ denote the vertices of $P$. \emph{Diagonal extension} of $P$ is a move which constructs a new train track $\tau'>\tau$ by adding $t-3$ branches (with disjoint interiors) inside $P$ such that each additional branch joins two non-consecutive vertices $v_i$ and $v_j$ and is tangent to the (infinitesimal) edges of $P$ at these vertices. See Figure~\ref{adding1}. The train track $\tau'$ has $t-3$ additional edges denoted $\epsilon_{ij}$ for appropriate choices of $i$ and $j$ with $|i-j|>1$: in place of the $t$-gon, it has $t-2$ unpunctured trigons. The function $\psi:\mathcal{W}(\tau)\to \mathcal{W}(\tau')$ is given as follows.
	
		If $w=(w_1,\dots, w_k)\in \mathcal{W}(\tau)$, $\psi(w)$ gives zero weights to each $\epsilon_{ij}$, and weight $w_i$ to $e_i$ for $1\leq i\leq k$. 

\end{mydef}
\begin{figure}[h!]
\centering
\psfrag{a1}[tl]{$\scriptstyle{v_1}$} 
\psfrag{a2}[tl]{$\scriptstyle{v_2}$} 
\psfrag{a3}[tl]{$\scriptstyle{v_3}$} 
\psfrag{a4}[tl]{$\scriptstyle{v_4}$}
\psfrag{a5}[tl]{$\scriptstyle{v_5}$} 
\psfrag{t}[tl]{$\scriptstyle{\tau}$}
\psfrag{e2}[tl]{$\scriptstyle{\epsilon_{35}}$}
\psfrag{e1}[tl]{$\scriptstyle{\epsilon_{25}}$} 
\psfrag{t2}[tl]{$\scriptstyle{\tau'}$}
\psfrag{t1}[tl]{$\scriptstyle{\tau_1=\tau_3}$}
\includegraphics[width=0.55\textwidth]{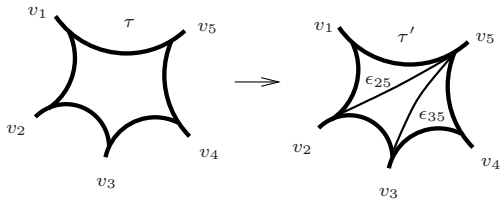}
\caption{Diagonal extension on an unpunctured $5$-gon}\label{adding1}
\end{figure}

\begin{mydef}[\textbf{Diagonal extension on punctured $\bf t$-gons}]

Let $\tau$ be a train track with a punctured $t$-gon $P$ where $t\geq 2$. Let $v_1,\dots, v_t$ denote the vertices of $P$. \emph{Diagonal extension} of $P$ is a move which constructs a new train track $\tau'>\tau$ by first adding a  branch $\epsilon_{ii}$ which encircles the puncture with both end points at a single vertex $v_i$ (so that $P$ is divided into a punctured monogon and an unpunctured $(t+1)$-gon); and then adding $t-2$ additional branches to divide the $(t+1)$-gon into $t-1$ trigons as in the unpunctured case. See Figure~\ref{adding2}. The train track $\tau'$ therefore has a punctured monogon and $t-1$ unpunctured trigons in place of the punctured $t$-gon $P$. The function $\psi:\mathcal{W}(\tau)\to \mathcal{W}(\tau')$ is given as follows.
	
		If $w=(w_1,\dots, w_k)\in \mathcal{W}(\tau)$, $\psi(w)$ gives weight $w_j$ to $e_j$ for $1\leq j\leq k$, and weight zero to the other edges.

\end{mydef}

\begin{figure}[h!]
\centering
\psfrag{a1}[tl]{$\scriptstyle{v_1}$} 
\psfrag{a2}[tl]{$\scriptstyle{v_2}$} 
\psfrag{w3}[tl]{$\scriptstyle{w_3}$} 
\psfrag{w2}[tl]{$\scriptstyle{w_2}$} 
\psfrag{w1}[tl]{$\scriptstyle{w_1}$} 
\psfrag{e1}[tl]{$\scriptstyle{\epsilon_1}$} 
\psfrag{e12}[tl]{$\scriptstyle{\epsilon_{13}}$} 
\psfrag{e2}[tl]{$\scriptstyle{\epsilon_{22}}$} 
\psfrag{a3}[tl]{$\scriptstyle{v_3}$} 
\psfrag{a4}[tl]{$\scriptstyle{v_4}$}
\psfrag{a5}[tl]{$\scriptstyle{v_5}$} 
\psfrag{t1}[tl]{$\scriptstyle{\tau_1}$}
\psfrag{t2}[tl]{$\scriptstyle{\tau_2}$}
\includegraphics[width=0.65\textwidth]{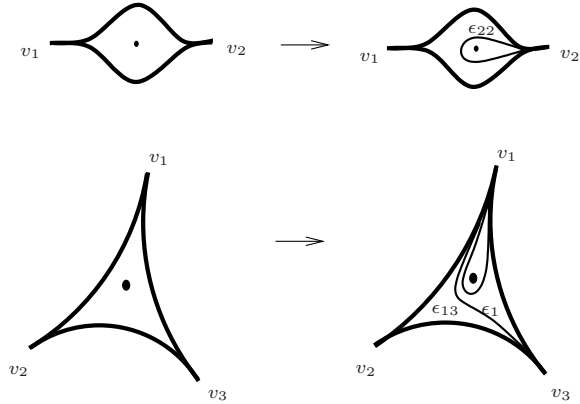}
\caption{Diagonal extension on a punctured bigon and a punctured trigon}\label{adding2}
\end{figure}
\begin{mydef}\label{diagonalextended}
We say that a complete train track $\tau'$ on $D_n$ is a \emph{diagonal extension} of $\tau$ if it is constructed from $\tau$ by a sequence of diagonal extensions. We write $\tau_1,\tau_2,\dots,\tau_{\xi}$ to denote the different diagonal extensions of $\tau$. 
\end{mydef}
\begin{remark}\label{catalan}
Note that the number of diagonal extensions of an unpunctured $t$-gon is $\xi=c_{t-2}$ where 
$$c_t=\binom{2t}{t}-\binom{2t}{t-1}$$
is the $t^\text{th}$ Catalan number, since the Catalan number gives the number of different ways to divide a polygon into triangles by joining its vertices with additional edges.

	 Similarly, the number of diagonal extensions of a punctured $t$-gon is $\xi=t \cdot c_{t-1}$: after adding the encircling branch $\epsilon_{ii}$ at vertex $v_i$ ($t$ choices), $P$ is divided into an unpunctured $(t+1)$-gon and a punctured monogon. Since there are $c_{t-1}$ different ways to divide an unpunctured $(t+1)$-gon into triangles the result follows.
	 
	Given a train track $\tau$, let $G$ denote the set of unpunctured polygons of $\tau$, and given $P\in G$ write $n_P$ for the number of vertices of $P$. Similarly, let $G'$ denote the set of punctured polygons of $\tau$, and given $P\in G'$ write $n_P$ for the number of vertices of $P$. 
Then the number of diagonal extensions of $\tau$ is given by $$\xi=\left(\displaystyle\prod_{P\in G}c_{n_P-2}\right)\cdot\left(\displaystyle\prod_{P\in G'}n_P\cdot c_{n_P-1}\right).$$
$\xi$ can be large for relatively simple train tracks and hence there can be many Dynnikov regions for braids on relatively few strings  

It will be seen from the proof of Theorem \ref{chartrelation} that there is a unique Dynnikov region that corresponds to a diagonal extension $\tau_i$ of $\tau$. Therefore, it follows that the number of Dynnikov regions is bounded above by the number of diagonal extensions of $\tau$ (which is given by the formula above).
\end{remark}

Since each pinching $\tau_p$ and diagonal extension $\tau_i$ of $\tau$ is complete, they define charts on $\mathcal{MF}_n$.  The following key lemma describes how these charts fit together. 

\begin{lem}\label{chartrelation}
Let $\tau$ be a regular invariant train track for $\beta$ with associated matrix $T:\mathcal{W}(\tau)\to\mathcal{W}(\tau)$. Let $\tau_p$ be a pinching of $\tau$, and let $\tau_1,\dots,\tau_{\xi}$ denote the diagonal extensions of $\tau$. Then,  

\begin{enumerate}[i.]
\item $\displaystyle\bigcup_{1\leq i\leq \xi} \mathcal{MF}(\tau_i)=\mathcal{MF}(\tau_p)$.
\item If $i\neq j$, then $\mathcal{MF}(\tau_i)$ and $\mathcal{MF}(\tau_j)$  intersect only on their boundaries.
\item For each $i$ there is some $j$ such that $\beta(\mathcal{MF}(\tau_i))=\mathcal{MF}(\tau_j)$, and the induced action of $\beta:\mathcal{W}(\tau_i)\to\mathcal{W}(\tau_j)$ is given by a matrix of the form $$\tilde{T}=\left[ \begin {array}{cc} T&X\\ \noalign{\medskip}0&{\it Id}
\end {array} \right]
$$
with respect to an appropriate choice of bases of $\mathcal{W}(\tau_i)$ and $\mathcal{W}(\tau_j)$.
\item For each $i$, the change of coordinate function $\rho\circ\phi_{\tau_i}:\mathcal{W}^+(\tau_i)\to\mathcal{S}_n$ is linear in a neighbourhood in $\mathcal{W}^+(\tau_i)$ of  $v^u=\phi^{-1}_{\tau_i}(\mathcal{F}^u,\mu^u)$.   
\end{enumerate}
\end{lem}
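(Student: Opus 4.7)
The proof is essentially local: both the pinching $\tau_p$ and each diagonal extension $\tau_i$ modify $\tau$ only inside the complementary polygons of $D_n\setminus\tau$, so statements (i) and (ii) reduce, polygon by polygon, to combinatorial statements about transverse measures inside a single $t$-gon. Statement (iv) is immediate from the material already in hand: by Lemma~\ref{perronfoliation} the eigenvector $v^u$ has strictly positive entries, so the prongs of the associated pre-foliation $\mathcal{F}^*$ are disconnected, and Remark~\ref{notconnected} then gives that $\rho\circ\phi_{\tau_i}$ is linear on some neighbourhood of $v^u$ in $\mathcal{W}^+(\tau_i)$.

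For (i) and (ii), fix a complementary polygon $P$ of $\tau$ (I would treat the unpunctured case first; the punctured case is analogous once the encircling branch is accounted for). A transverse measure on $\tau_p$ restricts inside $P$ to a collection of bands (Euclidean rectangles) joining pairs of sides of $P$ and meeting each side in a disjoint union of subintervals whose total widths are prescribed by the boundary measures; equivalently, it is a non-crossing matching of pieces of the boundary of $P$ weighted by positive reals. Such a matching can be read off a unique triangulation of $P$, namely the one whose diagonals are exactly the chords crossed by the bands -- this is the content of $\tau_i$ for a unique $i$, and gives (i). Degenerate configurations, where the width of some band shrinks to zero, force one or more diagonals to carry zero measure and these are precisely the foliations simultaneously carried by two (or more) distinct $\tau_i,\tau_{i'}$; this yields (ii).

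For (iii), Theorem~\ref{bh1} says $\beta$ permutes the infinitesimal edges of $\tau$, hence it permutes the complementary polygons of $\tau$. Declare $\tau_j$ to be the diagonal extension obtained by taking, inside each polygon $\beta(P)$, the image under $\beta$ of the triangulation of $P$ dictated by $\tau_i$. Then $\beta(\tau_i)$ is isotopic to a train track carried by $\tau_j$, so $\beta(\mathcal{MF}(\tau_i))=\mathcal{MF}(\tau_j)$. Now order the branches of $\tau_i$ (and $\tau_j$ analogously) putting the branches of $\tau$ first and the added diagonals last. Since every branch of $\tau$ is mapped by $\beta$ to an edge path lying entirely in $\tau$, the lower-left block of the transition matrix is $0$ and the upper-left block is the transition matrix $T$. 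Each added diagonal of $\tau_i$ is mapped to an edge path in $\tau_j$ whose retraction traverses exactly one added diagonal of $\tau_j$ (the one matched by our pairing) together with some main-branch detours, which gives the identity in the lower-right block and the contributions to the upper-right block $X$.

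The main obstacle is the local claim in (i): showing that the restriction of a measure on $\tau_p$ inside each polygon $P$ determines a unique (generic) triangulation of $P$. I expect this to be an elementary but somewhat bookkeeping-heavy consequence of the standard correspondence between non-crossing weighted chord diagrams on a polygon and triangulations; the punctured case requires slightly more care because of the encircling branch. Once this local model is in place, everything else is a direct application of Theorem~\ref{bh1} together with the definitions of pinching and diagonal extension.
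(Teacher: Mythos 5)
Your proof of part (iv) does not work. Lemma~\ref{perronfoliation} gives strict positivity of the eigenvector on the regular invariant track $\tau$, not on the diagonal extension $\tau_i$: by construction of diagonal extension, $v^u=\phi^{-1}_{\tau_i}(\mathcal{F}^u,\mu^u)$ assigns weight \emph{zero} to every added diagonal, which is precisely why $(\mathcal{F}^u,\mu^u)$ lies on the boundary of each $\mathcal{MF}(\tau_i)$ and why several Dynnikov regions can arise at all. Consequently Remark~\ref{notconnected} cannot be invoked at $v^u$: the pre-foliation built from $(\tau_i,v^u)$ still contains the unfoliated $t$-gon regions of $\tau$ (with $t\geq 4$, or punctured with $t\geq 2$), whose boundary leaves join cusps, so at $v^u$ there \emph{are} leaves connecting singularities. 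If your argument were valid, the Case~1 argument (Theorem~\ref{firstthm}) would apply verbatim and there would be a single linear chart and a single Dynnikov region, which is exactly the situation this lemma is designed to go beyond. The actual argument is different: the maps $\hat p$ of Lemma~\ref{piecewisemeasure} change linear regime exactly at measures for which a leaf following $p$ connects two singularities; at $v^u$ such connections exist, but choosing the diagonal extension $\tau_i$ amounts to fixing the relative configuration of these leaves once the connections are broken by giving the diagonals positive weight, so each $\hat p$, and hence $\rho\circ\phi_{\tau_i}$, is linear on the (one-sided) neighbourhood of $v^u$ inside $\mathcal{W}^+(\tau_i)$.

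For (i)--(iii) you are essentially on the paper's route (label sets of chords crossed by leaves versus diagonals of a triangulation, with zero-weight degenerations accounting for boundary intersections, and the permutation of cusps inducing a permutation of diagonal extensions), but two points need repair. First, the punctured case is not just ``the encircling branch'': the label set there must consist of \emph{ordered} pairs, since a leaf can join two gates on either side of the puncture, and one needs the extra labels $(j,j)$ recording through which gate the leaf of the $1$-pronged singularity exits; also the easy inclusion $\bigcup_i\mathcal{MF}(\tau_i)\subseteq\mathcal{MF}(\tau_p)$ should be stated (any two vertices of a pinched polygon are joined by a smooth path in $\tau_p$). Second, in (iii) ``ordering the branches of $\tau_i$'' does not produce a basis: $\mathcal{W}(\tau_i)$ is the subspace cut out by the switch conditions, and $T$ acts on $\mathcal{W}(\tau)$, which is spanned by main-branch weights only when every complementary region is an odd-gon. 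The ``appropriate choice of bases'' is where the block $X$ really comes from: when $D_n-\tau$ contains an even-gon the basis must include weights on infinitesimal edges and $X$ may be nonzero, whereas in the all-odd-gon case $X=0$; attributing $X$ to ``main-branch detours'' in the images of the added diagonals is not the correct mechanism and would contradict $X=0$ in the odd-gon case.
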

\begin{proof}
Assume first that every component of $D_n-\tau$ is a punctured monogon or unpunctured trigon, except for one unpunctured $t$-gon $P$ $(t\geq 4)$. Let $v_1,v_2,\dots,v_t$ denote the vertices of $P$. Let $\tau_p$ be a pinching of $\tau$ and $N$ denote a regular neighbourhood of the pinched $t$-gon. Let $a_1,\dots,a_t$ denote the \emph{gates} of $N$ (that is, the components of the subset of $\partial N$ which is not comprised of leaves). See Figure~\ref{regularpinchedgon}.

\begin{figure}[h!]
\centering
\psfrag{v1}[tl]{$\scriptstyle{v_1}$} 
\psfrag{v2}[tl]{$\scriptstyle{v_2}$} 
\psfrag{v3}[tl]{$\scriptstyle{v_3}$}
\psfrag{v4}[tl]{$\scriptstyle{v_4}$} 
\psfrag{v5}[tl]{$\scriptstyle{v_5}$} 
\psfrag{a1}[tl]{$\scriptstyle{a_1}$} 
\psfrag{a2}[tl]{$\scriptstyle{a_2}$} 
\psfrag{a3}[tl]{$\scriptstyle{a_3}$}
\psfrag{a4}[tl]{$\scriptstyle{a_4}$} 
\psfrag{a5}[tl]{$\scriptstyle{a_5}$} 
\psfrag{N}[tl]{$\scriptstyle{N}$}
\includegraphics[scale=0.7]{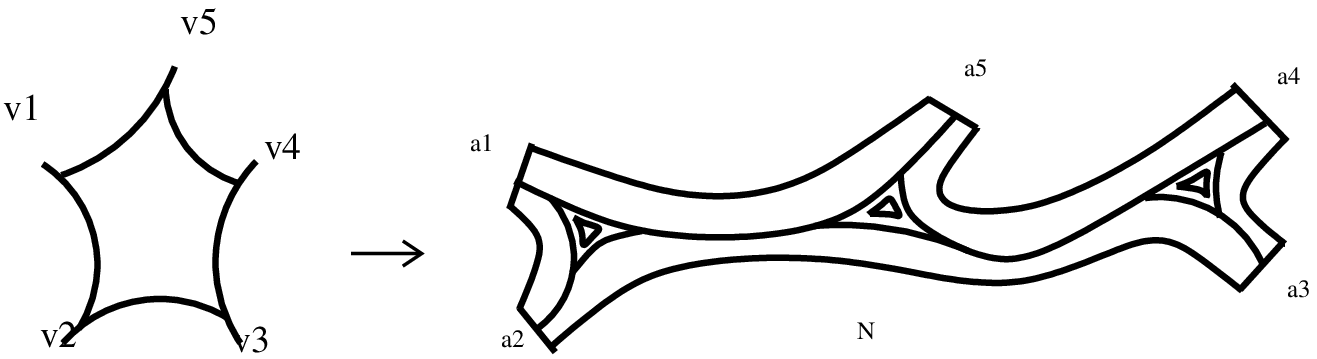}
\caption{The regular neighbourhood $N$ of a pinched unpunctured $5$-gon}\label{regularpinchedgon}
\end{figure}

 To each $(\mathcal{F},\mu)\in \mathcal{MF}(\tau_p)$ we associate the collection  of two element sets $\{j,k\}$ ($|j-k|>1$) such that $(\mathcal{F},\mu)$ has a leaf which enters $N$ through $a_j$ and exits through $a_k$. Denote this \emph{label set} $\Gamma(\mathcal{F},\mu)$ and observe that the cardinality $|\Gamma(\mathcal{F},\mu)|\leq t-3$ since leaves don't cross. 
 
 	Similarly, to each diagonal extension $\tau_i$ of $\tau$, we associate the two element sets $\{j,k\}$  such that $\tau_i$ has a branch joining $v_j$ to $v_k$. Denote this \emph{label set} $\Delta(\tau_i)$. It is clear that $(\mathcal{F},\mu)\in \mathcal{MF}(\tau_i)$ if and only if $\Gamma(\mathcal{F},\mu)\subseteq\Delta(\tau_i)$.

\begin{figure}[h!]
\centering
\psfrag{v1}[tl]{$\scriptstyle{v_1}$} 
\psfrag{v2}[tl]{$\scriptstyle{v_2}$} 
\psfrag{v3}[tl]{$\scriptstyle{v_3}$}
\psfrag{v4}[tl]{$\scriptstyle{v_4}$} 
\psfrag{v5}[tl]{$\scriptstyle{v_5}$} 
\psfrag{a1}[tl]{$\scriptstyle{a_1}$} 
\psfrag{a2}[tl]{$\scriptstyle{a_2}$} 
\psfrag{a3}[tl]{$\scriptstyle{a_3}$}
\psfrag{a4}[tl]{$\scriptstyle{a_4}$} 
\psfrag{a5}[tl]{$\scriptstyle{a_5}$} 
\psfrag{t1}[tl]{$\scriptstyle{\tau_1}$} 
\psfrag{t2}[tl]{$\scriptstyle{\tau_2}$} 
\psfrag{N}[tl]{$\scriptstyle{N}$}
\includegraphics[scale=0.7]{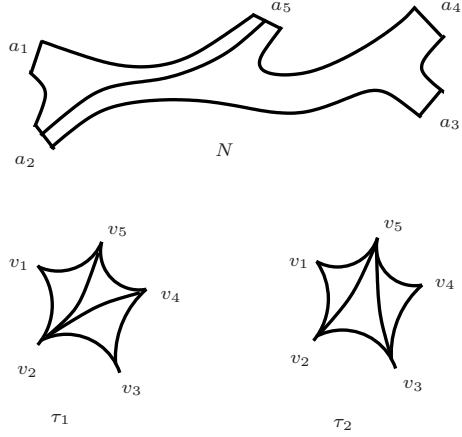}
\caption{If $\Gamma(\mathcal{F},\mu)=\{\{2,5\}\}$, then $(\mathcal{F},\mu)\in \mathcal{MF}(\tau_1)\cap \mathcal{MF}(\tau_2)$.}\label{choice}
\end{figure}

If $|\Gamma(\mathcal{F},\mu)|=t-3$, then there is a unique $\tau_i$ with $\Gamma(\mathcal{F},\mu)=\Delta(\tau_i)$; while if $|\Gamma(\mathcal{F},\mu)|<t-3$ then there are several $\tau_i$ with $\Gamma(\mathcal{F},\mu)\subset\Delta(\tau_i)$ and for each of these $\tau_i$, $\phi^{-1}_{\tau_i}(\mathcal{F},\mu)$ has some zero coordinates (see Figure \ref{choice}). This establishes that 

\begin{enumerate}[i.]
\item $\mathcal{MF}(\tau_p)\subseteq \displaystyle\bigcup^{\xi}_{i=1}{\mathcal{MF}(\tau_i)}$;
and
\item If $\tau_i\neq\tau_j$, then $\mathcal{MF}(\tau_i)$ and $\mathcal{MF}(\tau_j)$ can only intersect along their boundary.
\end{enumerate} 
 
 	  To show that $\displaystyle\bigcup^{\xi}_{i=1}\mathcal{MF}(\tau_i)\subseteq \mathcal{MF}(\tau_p)$, we observe that any two vertices of the pinched polygon of $\tau_p$ can be connected by a smooth path in $\tau_p$ and hence if $(\mathcal{F},\mu)$ is carried by any $\tau_i$, it is also carried by $\tau_p$.
 	  
 	   Next assume that every component of $D_n-\tau$ is a punctured monogon or unpunctured trigon, except for one punctured $t$-gon $P$ $(t\geq 2)$. Let $v_1,v_2,\dots,v_t$ denote the vertices of $P$. Let $\tau_p$ be a pinching of $\tau$ and $N$ denote a regular neighbourhood of the pinched $t$-gon. Label the gates $a_1,\dots,a_t$ of $N$ in anticlockwise cyclic order and let $l_i$ $(1 \leq i \leq t)$ denote the  leaf of $(\mathcal{F},\mu)$ in $\partial N$ which joins $a_{i-1}$ to $a_i$. See Figure~\ref{puncturedneigh}. 	  
 	    
\begin{figure}[h!]
\centering
\psfrag{v1}[tl]{$\scriptstyle{v_1}$} 
\psfrag{v2}[tl]{$\scriptstyle{v_2}$} 
\psfrag{v3}[tl]{$\scriptstyle{v_3}$}
\psfrag{a1}[tl]{$\scriptstyle{a_1}$} 
\psfrag{a2}[tl]{$\scriptstyle{a_2}$} 
\psfrag{a3}[tl]{$\scriptstyle{a_3}$}
\psfrag{l1}[tl]{$\scriptstyle{l_1}$} 
\psfrag{l2}[tl]{$\scriptstyle{l_2}$} 
\psfrag{l3}[tl]{$\scriptstyle{l_3}$}
\psfrag{N}[tl]{$\scriptstyle{N}$}
\includegraphics[scale=0.6]{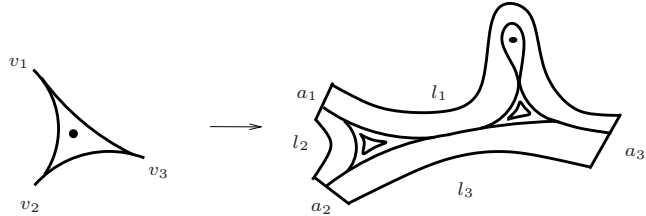}
\caption{The regular neighbourhood $N$ of a pinched punctured trigon}\label{puncturedneigh}
\end{figure}

 	The label set $\Gamma(\mathcal{F},\mu)$ of a measured foliation $(\mathcal{F},\mu)\in \mathcal{MF}(\tau_p)$ will consist of pairs $(j,k)\in \{1,\dots,t\}\times \{1,\dots,t\}$. This contrasts with the case for unpunctured $t$-gons, since there are two possible paths for leaves joining the gates $a_j$ and $a_k$, one on each side of the puncture. To describe this label set, first orient each gate $a_i$ and each leaf $l_i$ anticlockwise around $\partial N$. Then $(j,k)\in \Gamma(\mathcal{F},\mu)$ if and only if 
\begin{itemize}
\item there is a leaf segment $L$ of $(\mathcal{F},\mu)$ in $N$ which joins $a_j$ to $a_k$;
\item when $L$ is oriented from $a_j$ to $a_k$, the oriented loop  consisting of $L$ and a subset of $\partial N$ bounds a disk containing the puncture in its interior; and
\item $k\neq j+1$ (we don't include leaves which must necessarily be part of $N$).
\end{itemize}    
See Figure \ref{labelset1}. Notice that $(j,j)\in \Gamma(\mathcal{F},\mu)$ if and only if the leaf from the $1$-pronged singularity exits $N$ through $a_j$. Of course, it is possible that this leaf doesn't exit $N$ (e.g. if $(\mathcal{F},\mu)=(\mathcal{F}^u,\mu^u)$). Also, observe that the cardinality $|\Gamma(\mathcal{F},\mu)|\leq t-1$ since leaves don't cross.
 	
\begin{figure}[h!]
\centering
\psfrag{1}[tl]{$\scriptstyle{a_1}$} 
\psfrag{2}[tl]{$\scriptstyle{a_2}$} 
\psfrag{3}[tl]{$\scriptstyle{a_3}$}
\psfrag{4}[tl]{$\scriptstyle{a_4}$} 
\psfrag{5}[tl]{$\scriptstyle{a_5}$} 
\psfrag{l1}[tl]{$\scriptstyle{l_1}$} 
\psfrag{l2}[tl]{$\scriptstyle{l_2}$} 
\psfrag{l3}[tl]{$\scriptstyle{l_3}$}
\psfrag{l4}[tl]{$\scriptstyle{l_4}$} 
\psfrag{l5}[tl]{$\scriptstyle{l_5}$} 
\psfrag{N}[tl]{$\scriptstyle{N}$}
\includegraphics[scale=0.5]{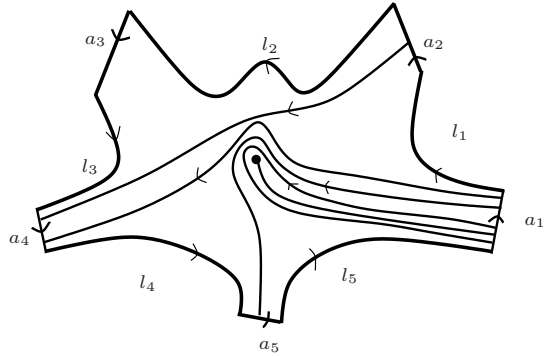}
\caption{The label set $\Gamma(\mathcal{F},\mu)$ is given by $\{(1,1),(1,5),(1,4),(2,4)\}$.}\label{labelset1}
\end{figure}
 	
 	To describe the label set $\Delta(\tau_i)$ for a diagonal extension $\tau_i$ of $\tau$, we label the vertices $v_1,\dots,v_t$ of $P$ in the anticlockwise cyclic order and put arrows on the edges of $P$ pointing from $v_j$ to $v_{j+1}$. For each additional branch, we place an arrow on the branch so that the loop composed of the branch and of edges of $P$ which encloses the puncture is oriented consistently. Then $\Delta(\tau_i)$ is the set of pairs $(j,k)$ such that there is an additional branch from $v_j$ to $v_k$. See Figure \ref{labelset2}.  It is clear that $(\mathcal{F},\mu)\in \mathcal{MF}(\tau_i)$ if and only if $\Gamma(\mathcal{F},\mu)\subseteq\Delta(\tau_i)$.

\begin{figure}[h!]
\centering
\psfrag{a1}[tl]{$\scriptstyle{v_1}$} 
\psfrag{a2}[tl]{$\scriptstyle{v_2}$} 
\psfrag{a3}[tl]{$\scriptstyle{v_3}$}
\psfrag{a4}[tl]{$\scriptstyle{v_4}$} 
\psfrag{a5}[tl]{$\scriptstyle{v_5}$} 
\psfrag{N}[tl]{$\scriptstyle{N}$}
\includegraphics[scale=1.1]{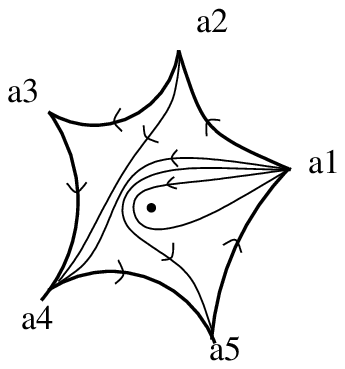}
\caption{The label set $\Delta(\tau_i)$ is given by $\{(1,1),(1,5),(1,4),(2,4)\}$}\label{labelset2}
\end{figure}

 	 If $|\Gamma(\mathcal{F},\mu)|=t-1$, then there is a unique $\tau_i$ with $\Gamma(\mathcal{F},\mu)=\Delta(\tau_i)$, while if $|\Gamma(\mathcal{F},\mu)|<t-1$ then there are several $\tau_i$ with $\Gamma(\mathcal{F},\mu)\subset\Delta(\tau_i)$ and for each of these $\tau_i$, $\phi^{-1}_{\tau_i}(\mathcal{F},\mu)$ has some zero coordinates. See Figure \ref{choice2}.

\begin{figure}[h!]
\centering
\psfrag{v1}[tl]{$\scriptstyle{v_1}$} 
\psfrag{v2}[tl]{$\scriptstyle{v_2}$} 
\psfrag{v3}[tl]{$\scriptstyle{v_3}$} 
\psfrag{a3}[tl]{$\scriptstyle{a_3}$}
\psfrag{a1}[tl]{$\scriptstyle{a_1}$} 
\psfrag{a2}[tl]{$\scriptstyle{a_2}$} 
\psfrag{t1}[tl]{$\scriptstyle{\tau_1}$} 
\psfrag{t2}[tl]{$\scriptstyle{\tau_2}$} 
\psfrag{ntp}[tl]{$\scriptstyle{N}$}
\includegraphics[scale=0.4]{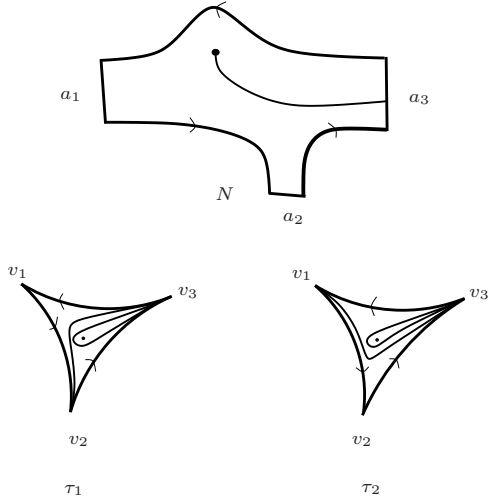}
\caption{If $\Gamma(\mathcal{F},\mu)=\{(3,3)\}$, then $(\mathcal{F},\mu)\in \mathcal{MF}(\tau_1)\cap \mathcal{MF}(\tau_2)$.}\label{choice2}
\end{figure}
This establishes that 
\begin{enumerate}[i.]
\item $\mathcal{MF}(\tau_p)\subseteq \displaystyle\bigcup^{\xi}_{i=1}{\mathcal{MF}(\tau_i)}$;
and
\item If $\tau_i\neq\tau_j$, then $\mathcal{MF}(\tau_i)$ and $\mathcal{MF}(\tau_j)$ can only intersect along their boundary.
\end{enumerate} 
 To show that $\displaystyle\bigcup^{\xi}_{i=1}\mathcal{MF}(\tau_i)\subseteq \mathcal{MF}(\tau_p)$, we observe that any two vertices of the pinched polygon of $\tau_p$ can be connected by a smooth path in $\tau_p$ and hence if $(\mathcal{F},\mu)$ is carried by any $\tau_i$, it is also carried by $\tau_p$.
 	  
 	  Therefore, we have proved the first two statements of the lemma in the case where $\tau$ has only one polygon which is not a punctured monogon or an unpunctured trigon. For the general case, we argue for each punctured and unpunctured polygon of $\tau$ in the same way as above and observe that if $(\mathcal{F},\mu)\in \mathcal{MF}(\tau_p)$, then there is a diagonal extension $\tau_i$ of $\tau$ so that $(\mathcal{F},\mu)\in \mathcal{MF}(\tau_i)$. Conversely, if  $(\mathcal{F},\mu)\in \mathcal{MF}(\tau_i)$ for some diagonal extension $\tau_i$ of $\tau$, then $(\mathcal{F},\mu)\in \mathcal{MF}(\tau_p)$ since any two vertices of each pinched polygon of $\tau_p$ can be connected by a smooth path. Also, if $(\mathcal{F},\mu)$ is carried by two diagonal extensions $\tau_i$ and $\tau_j$, then $\phi^{-1}_{\tau_i}(\mathcal{F},\mu)$ has some zero coordinates from the argument above and hence $\mathcal{MF}(\tau_i)$ and $\mathcal{MF}(\tau_j)$ can only intersect along their boundary. 
 	 
For the proof of the third statement, we first note that $\beta$ permutes the vertices of $\tau$. Hence, 
given a diagonal extension $\tau_i$ of $\tau$, the permutation on the vertices of $\tau$ sends each additional branch of $\tau_i$ onto another additional branch, and so gives another diagonal extension $\tau_j$ of $\tau$. Therefore, we have $\beta(\mathcal{MF}(\tau_i))=\mathcal{MF}(\tau_j)$. Then, $\beta:\mathcal{W}^+(\tau_i)\to\mathcal{W}^+(\tau_j)$ is described by the matrix $$\tilde{T}=\left[ \begin {array}{cc} T&X\\ \noalign{\medskip}0&{\it Id}
\end {array} \right]
$$
with respect to the natural coherent choice of bases of $\mathcal{W}^+(\tau_i)$ and $\mathcal{W}^+(\tau_j)$. We remark that if all components of $D_n-\tau$ are odd-gons then $\mathcal{W}^+(\tau_i)$ and $\mathcal{W}^+(\tau_j)$ have bases consisting of weights on the main branches of $\tau$ and the additional branches and hence $X$ is zero. If $D_n-\tau$ has an even-gon then $X$ can be non-zero since the bases of $\mathcal{W}^+(\tau_i)$ and $\mathcal{W}^+(\tau_j)$ consists of weights on edges which includes infinitesimal and additional ones. For some main branch $e_k$ of $\tau_i$, the corresponding weight $w_k$ is the sum of weights on some infinitesimal and additional branches and $f(e_k)$ may cover some basis elements.  

	 For the fourth statement,  we recall from (the proof of)	Lemma \ref{piecewiselin}, that the fact that each change of coordinates from train track coordinates $\mathcal{W}^+(\tau_i)$ to Dynnikov coordinates is only piecewise linear is a consequence of the piecewise linearity of the function $\hat{p}:\mathcal{W}^+(\tau_i)\to \mathbb{R}^+$, where $\hat{p}$ is a minimal non-tight train path with respect to some Dynnikov arc. The function $\hat{p}$ makes a transition from one linear region to another at measures for which some leaf which follows the train path $p$ connects two singularities (see the proof of Lemma \ref{piecewisemeasure}). 
	 
	 At $v^u=\phi^{-1}_{\tau_i}(\mathcal{F}^u,\mu^u)\in \mathcal{MF}(\tau_i)$ there are several such leaves connecting singularities but the choice of diagonal extension $\tau_i$ is precisely  a choice of the relative configurations of these leaves when the connection between singularities are broken, and therefore   $L:\mathcal{W}^+(\tau_i)\to\mathcal{S}_n$ is linear near $v^u$. 	
\end{proof}

Next we shall prove that when $\beta$ fixes the prongs of $\tau$, then every Dynnikov matrix is isospectral to $T$ up to some eigenvalues $1$.

\begin{prooftheorem34}

\textnormal{Let $\tau_p$ be a pinching of $\tau$ and $\tau_i$ $(1\leq i \leq \xi)$ be the diagonal extensions of~$\tau$. Since each strictly positive measure on $\tau$ induces a strictly positive measure on $\tau_p$, $\mathcal{W}^+(\tau_p)$ and $\mathcal{MF}(\tau_p)$ are neighbourhoods of $v^u$ and $(\mathcal{F}^u,\mu^u)$ respectively. Furthermore, by Lemma~\ref{chartrelation}, $\displaystyle\bigcup_{1\leq i\leq \xi} \mathcal{MF}(\tau_i)=\mathcal{MF}(\tau_p)$ and for $i\neq j$, $\mathcal{MF}(\tau_i)$ and $\mathcal{MF}(\tau_j)$ intersect only on their boundaries. 
Since $\beta$ fixes the prongs at all singularities other than unpunctured $3$-pronged and punctured $1$-pronged singularities, for each $\tau_i$ we have $\beta(\mathcal{MF}(\tau_i))=\mathcal{MF}(\tau_i)$ and the induced action $\beta:\mathcal{W}^+(\tau_i)\to\mathcal{W}^+(\tau_i)$ is given by the matrix} $$\tilde{T}=\left[ \begin {array}{cc} T&X\\ \noalign{\medskip}0&{\it Id}
\end {array} \right].
$$\textnormal{
 	By the fourth statement of Lemma \ref{chartrelation}, $\rho\circ\phi_{\tau_i}:\mathcal{W}^+(\tau_i)\to \mathcal{S}_n$ is  linear in a neighbourhood in $\mathcal{W}^{+}(\tau_i)$ of $v^u=\phi^{-1}_{\tau_i}(\mathcal{F},\mu)$. Therefore, for each $\tau_i$ we have the following commutative diagram:}

\begin{align*}\label{diagramcomplete}
\begin{CD}
\mathcal{W}^{+}(\tau_i) @>\tilde{T}>>\mathcal{W}^{+}(\tau_i)\\
@V\phi_{\tau_i}  VV @V\phi_{\tau_i} VV\\
\mathcal{MF}(\tau_i) @>\beta>>\mathcal{MF}(\tau_i)\\
@V \rho VV @V \rho VV\\
\mathcal{S}_n @>F>>\mathcal{S}_n\\
\end{CD}
\end{align*}
\textnormal{where the change of coordinate function $L_i=\rho\circ\phi_{\tau_i}$ is linear in a neighbourhood $U_i\subseteq\mathcal{W}^{+}(\tau_i)$ of $v^u$. Let $(a^u,b^u)$ denote the Dynnikov coordinates of $(\mathcal{F}^u,\mu^u)$. For each $1\leq i\leq \xi$, write $\mathcal{R}_i=L_i(U_i)$: by the above, $\displaystyle\bigcup_{1\leq i\leq \xi}\mathcal{R}_i$ is a neighbourhood of $(a^u,b^u)$. Then in $\mathcal{R}_i$, $D_i=F|_{\mathcal{R}_i}=L_i\circ \tilde{T}\circ L_i^{-1}$ is linear and isospectral to $T$ up to some  eigenvalues~$1$. These matrices $D_i$ ($1\leq i\leq \xi$) are precisely the Dynnikov matrices for $\beta\in B_n$.}\end{prooftheorem34}\vspace{-7mm}\qed

	In fact, the proof of Theorem \ref{uniquematrix}  shows that if $D_n-\tau$ has only odd-gons all of the Dynnikov matrices are equal and hence there is only one Dynnikov region in the fixed-pronged case.

\begin{prooftheorem35}
\textnormal{We use the notation in the proof of Theorem \ref{thm1}. Let $k=\rank(\tau)$ and $N=2n-4$ be the dimension of $\mathcal{S}_n$. We first note that each $L_i$ is of the form $\displaystyle\left(L|X_i\right)$ for some fixed $N\times k$ matrix $L$, where $L$  is the change of coordinates from $\mathcal{W}^+(\tau)$ to $ \mathcal{S}_n$ on the hyperplane $\mathcal{MF}(\tau)$.  Then each $L^{-1}_i$ is of the form $\displaystyle\left(\frac{A}{Y_i}\right)$ for some fixed $k\times N$ matrix $A$ which gives the change of coordinates from the $k$-dimensional subspace of $\cS_n$ corresponding to $\mathcal{MF}(\tau)$ to $\mathcal{W}^+(\tau)$. Therefore we have,}
\begin{align*}
L^{-1}_iL_i=\left(\frac{A}{Y_i}\right)\left(L|X_i\right)
\end{align*}
\textnormal{Since $L^{-1}_iL_i=\it Id$, $AX_i$ and $Y_iL$ are zero matrices for all $i$. It follows that for any $i,j$ we have}

\begin{align*}
L^{-1}_jL_i=\left[ \begin {array}{cc} \it Id_k&0\\ \noalign{\medskip}0&{\it P_{ij}}
\end {array} \right].
\end{align*}

\textnormal{for some $(N-k)\times (N-k)$ matrix $P_{ij}$. In particular, $L^{-1}_jL_i$ commutes with}
$$
\tilde{T}=\left[ \begin {array}{cc} T&0\\ \noalign{\medskip}0&{\it Id}
\end {array} \right].
$$
\textnormal{Hence, $D_i=L_i\tilde{T}L^{-1}_i=L_j\tilde{T}L_j^{-1}=D_j$ for all $i$ and $j$.}
\end{prooftheorem35}\vspace{-7mm}\qed

\begin{corollary}\label{anytraintrack2}
Let $\beta\in \MCG(D_n)$ be a pseudo\,-Anosov  braid with unstable invariant foliation $(\mathcal{F}^u,\mu^u)$ and dilatation $\lambda>1$. Let  $\tau$  be any invariant train track with associated transition matrix $T$.  If $\beta$ fixes the prongs at all singularities other than unpunctured $3$-pronged and punctured $1$-pronged singularities, then any Dynnikov matrix is isospectral to $T$ up to roots of unity and zeros. Furthermore, if $D_n-\tau$ consists of only odd-gons then there is a unique Dynnikov matrix.
\end{corollary}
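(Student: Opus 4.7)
\begin{proof-sketch}
The plan is to reduce to the regular-train-track case already handled by Theorems~\ref{thm1} and \ref{uniquematrix}, and then to bridge from a regular invariant train track to an arbitrary one by lifting to the orientation double cover, exactly as in the proof of Corollary~\ref{anycomplete}.

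By Theorem~\ref{bh1} we may pick a regular invariant train track $\tau_r$ for $\beta$ with transition matrix $T_r$. Since $\beta$ fixes the prongs at all singularities other than the unpunctured $3$-pronged and punctured $1$-pronged ones, Theorem~\ref{thm1} implies that every Dynnikov matrix $D_i$ is isospectral to $T_r$ up to some eigenvalues~$1$, and in particular up to roots of unity and zeros. Writing $A\sim B$ to mean that $A$ and $B$ are isospectral up to roots of unity and zeros, it remains to verify that $T_r \sim T$. A representative $f$ of $\beta$ lifts to a pseudo-Anosov $\tilde{f}$ on the orientation double cover $M$ with oriented unstable manifolds. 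Applying Lemma~\ref{eski} once to $\tau_r$ and once to $\tau$ gives $T_r\sim\tilde{T}_r$ and $T\sim\tilde{T}$. Since $\tilde{T}_r$ and $\tilde{T}$ are both transition matrices for the same pseudo-Anosov $\tilde{f}$ on $M$, Theorem~\ref{rykken} shows that each is isospectral up to roots of unity and zeros to $\tilde{f}_{1*}:H_1(M;\mathbb{R})\to H_1(M;\mathbb{R})$, and hence $\tilde{T}_r\sim\tilde{T}$. Chaining $D_i \sim T_r \sim \tilde{T}_r \sim \tilde{T} \sim T$ gives the first statement.

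For the uniqueness assertion, the key observation is that the components of $D_n-\tau$ for any invariant train track carrying $(\mathcal{F}^u,\mu^u)$ correspond to singularities of $(\mathcal{F}^u,\mu^u)$, with a $p$-gon producing a $p$-pronged singularity. Hence the hypothesis that every component of $D_n-\tau$ is an odd-gon forces $(\mathcal{F}^u,\mu^u)$ to have only odd-pronged singularities; by the third item of Theorem~\ref{bh1}, the complement of any regular invariant train track $\tau_r$ therefore also consists of only odd-gons. Theorem~\ref{uniquematrix} applied to $\tau_r$ then yields a unique Dynnikov matrix. The main obstacle in the argument is recognising that the odd-gon hypothesis is intrinsic to $\beta$ rather than to the particular train track chosen; once this is granted, both statements follow by stringing together previously established results.
\end{proof-sketch}
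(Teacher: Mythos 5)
Your handling of the first assertion matches the paper's intended route exactly: $D_i\sim T_r$ for a regular invariant train track $\tau_r$ by Theorem \ref{thm1}, and then the bridge $T_r\sim\tilde{T}_r\sim\tilde{T}\sim T$ via Lemma \ref{eski} and Theorem \ref{rykken}, just as in the proof of Corollary \ref{anycomplete}. That part is fine.

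The uniqueness part has a genuine gap. Your key claim --- that for \emph{any} invariant train track $\tau$ carrying $(\mathcal{F}^u,\mu^u)$ a complementary $p$-gon produces a $p$-pronged singularity, so that ``only odd-gons for $\tau$'' forces only odd-pronged singularities and hence, via Theorem \ref{bh1}, only odd-gons for a regular track $\tau_r$ --- is false for non-regular invariant tracks. The polygon--singularity correspondence holds when the carrying measure is strictly positive (as Lemma \ref{perronfoliation} guarantees for regular tracks); if some branch carries zero weight, the collapsing step in the construction of $\phi_\tau(\mu)$ merges the complementary regions adjacent to that branch, so several small polygons can produce a single higher-order singularity. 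This is precisely the situation that arises here: in the fixed-prong case a pinching or diagonal extension of $\tau_r$ is itself an invariant train track (this is what underlies Lemma \ref{chartrelation}(iii) with $i=j$ and the proof of Theorem \ref{thm1}), it is complete, and so its complement consists only of trigons and punctured monogons --- all odd-gons --- even when $(\mathcal{F}^u,\mu^u)$ has, say, an unpunctured $4$-pronged or punctured $2$-pronged singularity. Hence the odd-gon hypothesis on an arbitrary invariant $\tau$ does not transfer to $\tau_r$, and Theorem \ref{uniquematrix} cannot be invoked the way you do. To make the ``furthermore'' clause rigorous along your lines you would need to know that $(\mathcal{F}^u,\mu^u)$ is carried by $\tau$ with strictly positive weights (e.g.\ from irreducibility of $T$), or else read the odd-gon condition on the regular train track, which is how the paper implicitly applies Theorem \ref{uniquematrix}.
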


\begin{example}
Consider the $4$-braid $\gamma=\sigma^2_1 \sigma^2_2 \sigma_1 \sigma_2 \sigma^2_3\sigma_2 \sigma^4_1\sigma_2 \sigma^2_1\sigma^2_3\sigma_2 \sigma_1$. $\gamma$ has Dynnikov matrix (as given by the Dynn.exe program \cite{toby})

\begin{equation*}
\tiny D=
 \left[ \begin {array}{cccc} 17&0&12&4\\ \noalign{\medskip}28&1&20&6
\\ \noalign{\medskip}24&0&17&4\\ \noalign{\medskip}0&0&0&1\end {array}
 \right] 
\end{equation*}
\normalsize
and transition matrix

\begin{equation*}
\tiny 
T=
\left[ \begin {array}{ccc} 5&8&4\\ \noalign{\medskip}12&21&8
\\ \noalign{\medskip}12&20&9\end {array} \right]
\end{equation*}
\normalsize

associated to the invariant train track $\tau$ depicted in Figure \ref{noncomplete2}. $T$ has spectrum $\{1, 17\pm 12\sqrt{2}\}$ and  the eigenvector $v^u$ corresponding to the largest eigenvalue $\lambda=17+12\sqrt{2}$ is $(1, 1+\sqrt{2}, 1+\sqrt{2})$. That is, $$v^u=\phi_{\tau}^{-1}(\mathcal{F}^u,\mu^u)=(1, 1+\sqrt{2}, 1+\sqrt{2}).$$

\noindent Since $D_n-\tau$ contains a punctured bigon, $\tau$ is not complete and we have $\rank(\tau)=3$. Pinching across an edge of the punctured bigon gives a complete pinched train track $\tau_p$. We depict a standard embedding of $\tau_p$ with respect to the Dynnikov arcs in Figure~\ref{noncomplete21}. Since $\gamma$ fixes the prongs of $\tau$, the transition matrix associated to $\tau_p$ is given by

\begin{equation*}
\tiny T_p=
\left[ \begin {array}{cccc} 5&8&4&0\\ \noalign{\medskip}12&21&8&0
\\ \noalign{\medskip}12&20&9&0\\ \noalign{\medskip}x&y&z&1
\end {array} \right]
\end{equation*}

for some $x$, $y$, $z$ which will be determined later as $x=2$, $y=3$,   $z=1$. 

	We now compute the change of coordinate function $L:\mathcal{W}^+(\tau_p)\to \mathcal{S}_4$ in a neighbourhood of $v^u$. First observe that $\beta_1=a$, $\beta_2=b$, $\beta_3=c$. Therefore, $$b_1=\frac{a-b}{2}~~~\text{and}~~~b_2=\frac{b-c}{2}.$$ 
We have $\alpha_1=\frac{a+b}{2}$, $\alpha_3=\frac{b+c}{2}-d$. Since $\alpha_{2i-1}+\alpha_{2i}=\max(\beta_i,\beta_{i+1})$ we have, $$\alpha_2=\max(a,b)-\frac{a+b}{2}~~\text{and}~~\alpha_4=\max(b,c)-\frac{b+c}{2}+d.$$
Since $b>a$ at $v^u$, $\alpha_2=\frac{b-a}{2}$ and 

\begin{figure}
\centering
\psfrag{a}[tl]{$\scriptstyle{a}$}
\psfrag{b}[tl]{$\scriptstyle{b}$}
\psfrag{c}[tl]{$\scriptstyle{c}$}
\includegraphics[width=0.5\textwidth]{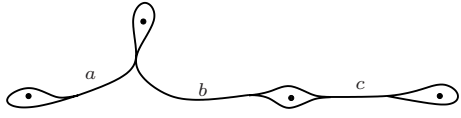}
\caption{Invariant train track for $\gamma$}\label{noncomplete2}
\end{figure}
\begin{figure}
\centering
\psfrag{a1}[tl]{$\scriptstyle{\alpha_1}$}
\psfrag{a2}[tl]{$\scriptstyle{\alpha_2}$}
\psfrag{a}[tl]{$\scriptstyle{a}$}
\psfrag{b}[tl]{$\scriptstyle{b}$}
\psfrag{c}[tl]{$\scriptstyle{c}$}
\psfrag{d}[tl]{$\scriptstyle{d}$}
\psfrag{a3}[tl]{$\scriptstyle{\alpha_3}$}
\psfrag{a4}[tl]{$\scriptstyle{\alpha_4}$}
\psfrag{b1}[tl]{$\scriptstyle{\beta_{1}}$}
\psfrag{b2}[tl]{$\scriptstyle{\beta_{2}}$}
\psfrag{b3}[tl]{$\scriptstyle{\beta_{3}}$}
\psfrag{b-d}[tl]{$\scriptstyle{b-d}$}
\psfrag{c-d}[tl]{$\scriptstyle{c-d}$}
\psfrag{(b+c)/2-d}[tl]{$\scriptstyle{\frac{b+c}{2}-d}$}
\includegraphics[width=0.6\textwidth]{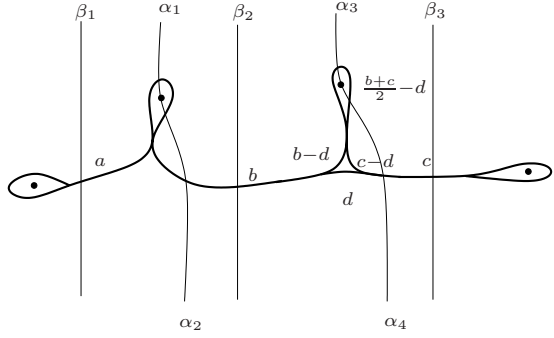}
\caption{A standard embedding of the pinched train track $\tau_p$}\label{noncomplete21}
\end{figure}

\begin{align*}
\alpha_4=\begin{cases}
\frac{b-c}{2}+d&~ b\geq c\\
\frac{c-b}{2}+d&~ b\leq c
\end{cases}
\end{align*}
 
so $a_1=-\frac{a}{2}$ and

\begin{align*}
a_2&=\frac{\alpha_4-\alpha_3}{2}=\frac{\max(b,c)}{2}-\frac{b+c}{2}+d
=\begin{cases}
d-\frac{c}{2}&b\geq c\\
d-\frac{b}{2}&b\leq c
\end{cases}
\end{align*}

Therefore, when $b\geq c$, $L:\mathcal{W}^+(\tau_p)\to \mathcal{S}_n$ is given by
\begin{equation*}
\tiny
L_1=\left[ \begin {array}{cccc} -1/2&0&0&0\\ \noalign{\medskip}0&-1/2&0&1\\ \noalign{\medskip}1/2&-1/2&0&0\\ \noalign{\medskip}0&1/2&-1/2&0
\end {array} \right], \end{equation*}
\normalsize
and when $b\leq c$, $L:\mathcal{W}^+(\tau_p)\to \mathcal{S}_n$ is given by

\begin{equation*}\tiny
L_2=\left[ \begin {array}{cccc} -1/2&0&0&0\\ \noalign{\medskip}0&0&-1/2&1\\ \noalign{\medskip}1/2&-1/2&0&0\\ \noalign{\medskip}0&1/2&-1/2&0
\end {array} \right]. \end{equation*}\normalsize
The Dynnikov matrices of $\beta$ are therefore $D_1=L_1T_pL^{-1}_1$ and $D_2=L_2T_pL^{-1}_2$. Using

\begin{equation*}\tiny T_p=
\left[ \begin {array}{cccc} 5&8&4&0\\ \noalign{\medskip}12&21&8&0
\\ \noalign{\medskip}12&20&9&0\\ \noalign{\medskip}x&y&z&1
\end {array} \right],
\end{equation*}\normalsize

we compute that, for both $i=1$ and $i=2$, $D_i$ is given by

\begin{equation*}
D_i=\tiny{\left[ \begin {array}{cccc} 17&0&12&4\\ \noalign{\medskip}40-2\,x-2\,y-2\,z&1&28-2\,y-2\,z&8-2\,z\\ \noalign{\medskip}24&0&17&4
\\ \noalign{\medskip}0&0&0&1\end {array} \right]}.\end{equation*}

\normalsize 
\begin{figure}
\centering
\psfrag{1}[tl]{$\scriptstyle{\tau_1}$}
\psfrag{2}[tl]{$\scriptstyle{\tau_2}$}
\psfrag{a}[tl]{$\scriptstyle{a}$}
\psfrag{b}[tl]{$\scriptstyle{b}$}
\psfrag{c}[tl]{$\scriptstyle{c}$}
\includegraphics[width=0.5\textwidth]{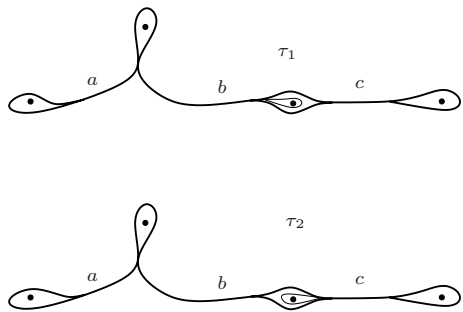}
\caption{Diagonal extensions of $\tau$}\label{noncomplete22}
\end{figure}

Therefore, there is a unique Dynnikov matrix $D$ as expected by Theorem \ref{uniquematrix}. Comparing the second row of $D_i$ with the known Dynnikov matrix $D$ gives $x=2$, $y=3$, $z=1$ as claimed above. The spectrum of $T_p$ and $D$ is $$\{1,1, 17\pm 12\sqrt{2}\}.$$

Figure \ref{noncomplete22} depicts the two possible diagonal extensions $\tau_1$ and $\tau_2$ of $\tau$. Observe that, 
given $(\mathcal{F},\mu)\in \mathcal{MF}(\tau_p)$, $(\mathcal{F},\mu)\in \mathcal{MF}(\tau_1)$ if and only if $ b\geq c$, and $(\mathcal{F},\mu)\in \mathcal{MF}(\tau_2)$ if and only if $ c\geq b$, corresponding to the two linear regions in the above coordinate change. 

\end{example}

\begin{quest}\label{quest}
Let $\beta\in B_n$ be a pseudo\,-Anosov  braid with unstable invariant foliation $(\mathcal{F}^u,\mu^u)$, dilatation $\lambda>1$ and regular invariant train track $\tau$ having transition matrix $T$. If $\beta$ permutes the prongs of $(\mathcal{F}^u,\mu^u)$ non-trivially, is every Dynnikov matrix $D_i$ isospectral to $T$ up to roots of unity? 
\end{quest}
The claim in Question \ref{quest} has been confirmed with a wide range of examples. The difficulty which arises in this case is explained in Remark \ref{sonornek}.
\begin{remark}\label{sonornek}
Let $\tau_i$ be the diagonal extensions of $\tau$ $1\leq i \leq \xi$. When $\beta$ permutes the singularities of $(\mathcal{F}^u,\mu^u)$ non-trivially  we have $\beta(\mathcal{MF}(\tau_i))=\mathcal{MF}(\tau_{i+1})$ for each $1\leq i \leq \xi$ and the induced action  $\beta:\mathcal{W}(\tau_i)\to\mathcal{W}(\tau_{i+1})$ is given by the matrix $$\tilde{T}=\left[ \begin {array}{cc} T&0\\ \noalign{\medskip}0&{\it Id}
\end {array} \right]
$$
with respect to the above choice of bases of $\mathcal{W}(\tau_i)$ and $\mathcal{W}(\tau_{i+1})$. We therefore have the commutative diagram for each $i$:

\begin{align*}\label{diagramcomplete}
\begin{CD}
\mathcal{W}^{+}(\tau_i) @>\tilde{T}>>\mathcal{W}^{+}(\tau_{i+1})\\
@V\phi_{\tau_i}  VV @V\phi_{\tau_{i+1}} VV\\
\mathcal{MF}(\tau_i) @>\beta>>\mathcal{MF}(\tau_{i+1})\\
@V \rho VV @V \rho VV\\
\mathcal{S}_n @>F>>\mathcal{S}_n\\
\end{CD}
\end{align*}
where $L_i=\rho\circ\phi_{\tau_i}$ is linear in a neighbourhood $U_i$ of $v^u$ in $\mathcal{W}^+(\tau_i)$. For $1\leq i\leq \xi$, let $\mathcal{R}_i=L_i(U_i)$ and $(a^u,b^u)$ denote the Dynnikov coordinates of $(\mathcal{F}^u,\mu^u)$. Then, $\displaystyle\bigcup_{1\leq i\leq \xi}\mathcal{R}_i$ is a neighbourhood of $(a^u,b^u)$, and in $R_i$ the Dynnikov matrices are given by $D_i=F|_{\mathcal{R}_i}=L_{i+1}\circ \tilde{T}\circ L_i^{-1}$. Therefore the spectrum of $D_i$ ($1\leq i\leq \xi$) for $\beta\in B_n$ does not have an obvious interpretation. 

Note that when $\beta$ permutes the prongs of $(\mathcal{F}^u,\mu^u)$ non-trivially, then for some $m\in \mathbb{Z}^{+}$, $\beta^m$ fixes the prongs. The transition matrix for $\beta^m$ on a diagonal extension of $\tau$ is of the form

$$
T'=\left[ \begin {array}{cc} T^m&0\\ \noalign{\medskip}0&Id\end {array}
 \right]. 
$$

By Theorem \ref{uniquematrix}  the Dynnikov matrices for $\beta^m$ are the same and isospectral to $T^m$ up to some eigenvalues $1$. 
\end{remark}

\section{A comparison with known algorithms}\label{comparemethods}
 The main reasons that our method works much faster than the train track approach are: first, because of the simple way we put global coordinates on $\mathcal{MF}_n$; and  second, because it is easy to find the Dynnikov coordinates of $[\mathcal{F}^u,\mu^u]$ on $\mathcal{PMF}_n$ numerically since it is a globally attracting fixed point of the induced action. In addition, the method is more transparent since it relies on algebraic calculations rather than on understanding the image of a train track under the action of an isotopy class.  We encourage the reader to take a random braid and try the two different methods using the train track and Dynnikov program \cite{toby}.

To give an explicit example, let $\beta$ be the $4$-braid 
$$\sigma^{-1}_1\sigma^{-3}_2\sigma^{-5}_3\sigma^4_1\sigma^{-2}_2 \sigma^{-1}_3\sigma_1\sigma_2\sigma^{-2}_3
(\sigma_2 \sigma^{-2}_3)^{19}\sigma^{-8}_1\sigma^{-1}_3\sigma^{-2}_1\sigma^2_2\sigma^{-1}_3
\sigma^{-1}_1\sigma_2\sigma_3\sigma_1\sigma^{-1}_2\sigma^{-1}_3.$$
Using Dynn.exe program in \cite{toby} we find that $\beta$ has a unique Dynnikov matrix given by 
\small{$$\left[ \begin {array}{cccc} -68900596045753&200002959211464&

146825523685804&-943752747512\\ \noalign{\medskip}-181490417757959&

526825930446403&386751743244292&-2485930314639\\ \noalign{\medskip}-

188609831321041&547491989409364&401923043417627&-2583447121425

\\ \noalign{\medskip}76020009608848&-220669018174468&-161996823859176&

1041269554295\end {array} \right].$$}
which has dilatation approximately $8.6\times 10^{14}$ (entropy is $\sim 34.38$). This means the initial image edge paths have length approximately $8.6\times 10^{14}$. An edge path of this length occupies approximately $10^5$ GB of memory and hence the train track program can not even start, whereas its Dynnikov matrix is found in less than a second.

\begin{remark}
 We note that providing explicit bounds on the complexity of the algorithm used to compute Dynnikov matrices is difficult, since we don't know the size of the Dynnikov regions and hence how many iterates are needed to land in it. 

\end{remark}

Finally, we remark that the method developed in this paper and in \cite{paper1} can be realized on any compact, orientable surface using for instance the well known Dehn-Thurston coordinates \cite{penner}. The main reason we use the Dynnikov coordinates is that it gives us a much easier calculational approach and so is particularly suitable for our problems on the finitely punctured disk. However, it would be very interesting to relate Dynnikov coordinates to Dehn-Thurston coordinates and generalize our results to higher genus surfaces.

\normalsize

 \section*{Acknowledgements} The author would like to thank Toby Hall for his guidance and critical reading 
during the preparation of this paper. She would also like to thank Philip Boyland, Anthony Manning, Lasse Rempe and the referees for their constructive suggestions and valuable comments.

\bibliographystyle{plainnat}
\bibliography{myrefsaa}

\bibliographystyle{plainnat}

\medskip
Received xxxx 20xx; revised xxxx 20xx.
\medskip

\end{document}